\newcommand{\dd}{\mathrm{d}}
\newcommand{\xrightarrowdbl}[2][]{\xrightarrow[#1]{#2}\mathrel{\mkern-14mu}\rightarrow}
\newcommand\restr[2]{{
  \left.\kern-\nulldelimiterspace 
  #1 
  \vphantom{\big|} 
  \right|_{#2} 
  }}
\DeclareMathOperator{\im}{Im}
\DeclareMathOperator{\DR}{DR}
\DeclareMathOperator{\Gr}{Gr}
\DeclareMathOperator{\Der}{Der}
\DeclareMathOperator{\codim}{codim}
\newcommand{\derR}{\mathbf{R}}
\newcommand{\derL}{\mathbf{L}}
\newcommand{\Ltensor}{\overset{\derL}{\otimes}}
\newcommand{\decal}[1]{\lbrack #1 \rbrack}
\newcommand{\Omod}{\mathscr{O}}
\newcommand{\Dmod}{\mathscr{D}}
\newcommand{\Mmod}{\mathcal{M}}
\newcommand{\shH}{\mathscr{H}}
\newcommand{\shB}{\mathscr{B}}
\newcommand{\shI}{\mathscr{I}}
\newcommand{\shK}{\mathscr{K}}
\newcommand{\shG}{\mathscr{G}}
\newcommand{\shC}{\mathscr{C}}
\newcommand{\shS}{\mathcal{S}}
\newcommand{\shF}{\mathscr{F}}
\newcommand{\shM}{\mathscr{M}}
\newcommand{\shN}{\mathscr{N}}
\newcommand{\shCC}{\underline{\mathbb{C}}}
\newcommand{\shExt}{\mathscr{E}\hspace{-1.5pt}\mathit{xt}}
\newcommand{\shHom}{\mathscr{H}\hspace{-2.5pt}\mathit{om}}
\newcommand{\ZZ}{\mathbb{Z}}
\newcommand{\QQ}{\mathbb{Q}}
\newcommand{\RR}{\mathbb{R}}
\newcommand{\CC}{\mathbb{C}}
\newcommand{\jump}{\vskip 2mm}
\numberwithin{equation}{section}
\theoremstyle{plain}
\newtheorem{theorem}{Theorem}
\newtheorem{proposition}{Proposition}
\newtheorem{lemma}{Lemma}
\newtheorem{corollary}{Corollary}
\theoremstyle{definition}
\newtheorem{remark}{Remark}
\newtheorem{example}{Example}
\newcommand{\GM}{\shG^\bullet_f}
\numberwithin{theorem}{subsection}
\numberwithin{proposition}{subsection}
\numberwithin{lemma}{subsection}
\numberwithin{corollary}{subsection}
\numberwithin{remark}{subsection}
\numberwithin{equation}{subsection}
\numberwithin{example}{subsection}
\newcounter{intro}
\newtheorem{intro-proposition}[intro]{Proposition}
\newtheorem{intro-corollary}[intro]{Corollary}
\newtheorem{intro-theorem}[intro]{Theorem}
\newtheorem{intro-question}[intro]{Question}
\title[The Gauss-Manin system of an ICIS]{The Gauss-Manin system of an ICIS}
\author[G. Blanco]{Guillem Blanco}
\thanks{The author is supported by a Postdoctoral Fellowship of the Research Foundation -- Flanders}
\address{Department of Mathematics\\ KU Leuven,
Celestijnenlaan 200B, 3001 Leuven, Belgium.}
\email{guillem.blanco@kuleuven.be}
\begin{document}

\begin{abstract}
For an isolated complete intersection singularity (ICIS), we define and study its Gauss-Manin system and its associated Hodge filtration. We show the relation between the Hodge filtration and a generalized Brieskorn lattice and study conditions for the existence of a microlocal structure. Using these ideas, we relate the residue on the saturated Brieskorn lattice with a \(b\)-function introduced by Torrelli.
\end{abstract}

\maketitle

\renewcommand\thesubsection{\arabic{subsection}}

\subsection{Introduction}

Let \(f : (\CC^{n+1}, x) \longrightarrow (\CC, 0)\) be a germ of a holomorphic function defining an isolated hypersurface singularity. The Gauss-Manin system is the complex of \(\Dmod\)-modules defined by the \(\Dmod\)-module theoretic direct image by \(f\) of the structure sheaf. The cohomology sheaves of this complex are regular holonomic \(\Dmod\)-modules that carry a natural good filtration, the Hodge filtration. The Gauss-Manin system is directly connected with the Picard-Lefschetz monodromy, the classical theory of the Gauss-Manin connection, the Brieskorn lattice, and the spectrum of the singularity. It has been extensively studied in \cite{Pham79, Sai82, SS85, Sai89}.

\jump

In this work, we define and study the Gauss-Manin system \(\shG^\bullet_f\) of an isolated complete intersection singularity (ICIS) \((Z, x)\) by means of 1-parametric smoothings \(f : (X, x) \longrightarrow (\CC, 0)\) and the local cohomology module associated with \(X\), see \cref{sec:GM} for the precise definition and basic properties. In contrast to the hypersurface case, the natural good filtration in the ICIS case comes from the order-Ext filtration in local cohomology, and the Hodge filtration is harder to describe, see \cref{sec:filtrations}. Our first main result is the following.

\begin{theorem} \label{thm:main}
Let \((Z, x)\) be an ICIS of dimension \(n > 0\), then
\[
    \shH^0 \shG_f^\bullet \simeq \shH^n(f_* \omega_f^\bullet) \otimes_{\CC} \CC[\partial_t],
\]
and the order-Ext filtration \(F_k\) coincides with the filtration by the order of \(\partial_t\).
\end{theorem}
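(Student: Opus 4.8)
The plan is to compute $\shG_f^\bullet = f_+\shB_X$ explicitly by a de Rham-type complex and to read off $\shH^0$ as a cokernel. Here $\shB_X$ denotes the local cohomology module representing $\Omod_X$ as a left $\Dmod$-module, and the direct image along $f : X \to \CC$ is represented by the de Rham-type complex of $\shB_X$ relative to $f$, twisted by the transfer module $\Dmod_{X\to\CC}$. Since $\Dmod_{X\to\CC} \cong \Omod_X \otimes_\CC \CC[\partial_t]$ as a sheaf, every term of this complex has the form (coherent module)$\,\otimes_\CC\CC[\partial_t]$, and after $\derR f_*$ we obtain a complex of $\CC[\partial_t]$-modules. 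Its differential $\nabla$ splits as a part preserving the order in $\partial_t$ (the relative de Rham differential) plus a coupling term $df\wedge(\,\cdot\,)\,\partial_t$, arising from the chain rule in $\Dmod_{X\to\CC}$, which raises that order by one. As $\dim X = n+1$ and $f$ has relative dimension $n$, the complex sits in nonpositive degrees and $\shH^0\shG_f^\bullet$ is the top cohomology, the cokernel of $\nabla$ in the two top spots.

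I would isolate the layer of $\partial_t$-degree zero first. Away from $x$ the map $f$ is a submersion, so by the relative Poincar\'e lemma that layer is a resolution, and the relative-duality identification built into the definition of $\shG_f^\bullet$ shows that its $\derR f_*$ has cohomology $\shH^j(f_*\omega_f^\bullet)$. It then remains to prove two things: (i) this relative cohomology is concentrated in degree $n$, and (ii) restoring the coupling term $df\wedge(\,\cdot\,)\,\partial_t$ turns all of $\shH^0\shG_f^\bullet$ into a free $\CC[\partial_t]$-module on $\shH^n(f_*\omega_f^\bullet)$.

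The key input for (i) is Hamm's theorem: for an ICIS of dimension $n>0$ the Milnor fiber is homotopy equivalent to a bouquet of $n$-spheres, so its reduced cohomology, and with it $\shH^j(f_*\omega_f^\bullet)$, vanishes for $j\neq n$. This concentration is the step I expect to be the main obstacle, because it requires controlling the relative dualizing complex of the \emph{singular} total space $X$ across the unique singular fiber $Z = f^{-1}(0)$; this is precisely where the ICIS hypotheses enter, through the Cohen--Macaulayness of $X$ and the finiteness encoded in $\shB_X$, to convert Hamm's topological statement into the coherent vanishing. Granting (i), the coupling term can create neither higher cohomology nor relations among the powers of $\partial_t$, so the associated graded of $\shH^0\shG_f^\bullet$ for the $\partial_t$-order is $\shH^n(f_*\omega_f^\bullet)\otimes_\CC\CC[\partial_t]$ with $\partial_t$ mapping each graded piece isomorphically onto the next; hence the module is free over $\CC[\partial_t]$ on the bottom layer and
\[
\shH^0\shG_f^\bullet \;\simeq\; \shH^n(f_*\omega_f^\bullet)\otimes_\CC\CC[\partial_t].
\]

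For the second assertion I would track the order-Ext filtration of $\shB_X$ through the construction: one step of the Ext/order filtration corresponds to one application of the transfer module, i.e. to one power of $\partial_t$, so the good filtration induced on $f_+\shB_X$ is exactly the filtration by the order of $\partial_t$. As both are good filtrations on the holonomic module $\shH^0\shG_f^\bullet$ and they agree on the generating layer $\shH^n(f_*\omega_f^\bullet)$ of $\partial_t$-degree zero, they coincide, giving $F_k = (\text{order}\le k\text{ in }\partial_t)$.
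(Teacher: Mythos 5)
Your outline reproduces the paper's skeleton --- the complex you describe is exactly the paper's \(\shK^\bullet\) from \cref{sec:graph}, and your assertions (i) and (ii) correspond to \cref{prop:cohomology-relative} and \cref{prop:cohomology-derham} --- but the two steps you grant yourself are precisely the technical core, and the justifications you sketch for them would fail. For (i): Hamm's bouquet theorem controls \(H^j(X_t, \CC)\) only for \(t \neq 0\), hence only the restriction of \(\shH^j(f_*\omega_f^\bullet)\) to \(S^*\). What you actually need to kill is the stalk at \(0\), namely \(H^p(\omega_{f,x}^\bullet)\) for \(0 < p < n\), and no amount of Cohen--Macaulayness converts the topological statement into this, because \(\omega_f^\bullet\) is \emph{not} compatible with base change: by \cref{rmk:base-change}, \(H^{n-1}(\omega_{Z,x}^\bullet)\) has dimension \(\tau(Z,x) > 0\) even though the sheaf \(\shH^{n-1}(f_*\omega_f^\bullet)\) vanishes, so one cannot pass between fiber cohomology and stalks. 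The paper proves the stalk vanishing algebraically: \cref{lemma:torsion} (generalized Koszul complexes computing the torsion and cotorsion of \(\Omega_f^\bullet\) in terms of the codimension of \(\shC_f\), using normality of \(X\)), combined with Looijenga's vanishing of \(H^p(\Omega_{f,x}^\bullet)\) and Greuel's freeness of \(H^n(\omega_{f,x}^\bullet)\); moreover the coherence of \(\shH^p(f_*\omega_f^\bullet)\) is not Grauert (the map is not proper) but the coherence theorem of \cite{BG80}.

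For (ii), the sentence ``the coupling term can create neither higher cohomology nor relations among the powers of \(\partial_t\)'' is the entire content of \cref{prop:cohomology-derham} and does not follow from (i) by any associated-graded formalism: cocycles of \(\shK^\bullet\) mix \(\partial_t\)-degrees, and lowering the top-degree piece of one requires showing that the relations \([\dd x_i \wedge \eta_\alpha / x_1 \cdots x_r] = 0\) force local divisibility of \(\eta_\alpha\) by \(\dd x_1 \wedge \cdots \wedge \dd x_{r+1}\), and then --- the crux --- that the correcting section, which this construction produces only on \(X \setminus \shC_f\), extends across the singular point. That extension is the Riemann extension property of the sheaves \(\shExt^r_{\Omod_U}(\Omod_X, \Omega_U^{r+1+p})\) (\cref{lemma:extension}), valid because \(X\) is normal; this is exactly where \(n > 0\) enters the theorem, and your sketch never uses it at this stage. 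The same cocycle analysis, via the exact sequence \eqref{eq:relative-exact-sequence}, is what identifies the \(\partial_t\)-degree-zero cocycles with sections of \(\omega_f^\bullet\) --- this is not ``built into the definition'' --- and what shows that representatives can be chosen of order zero in local cohomology, which your filtration argument also needs: agreement ``on the generating layer'' presupposes that the order-Ext filtration is generated at level zero, itself a consequence of this computation rather than an a priori fact. Finally, reading \(\shH^0 \shG_f^\bullet\) off as a cokernel after \(\derR f_*\) is not automatic either; the paper needs the hypercohomology spectral sequence together with \(\shH^0(\shK^\bullet)\) being concentrated at \(x\) and \(\shH^{-n}(\shK^\bullet) \simeq f^{-1}\Omod_S\) (\cref{lemma:stalks} and \cref{cor:cohomology-derham}).
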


Here \(\omega_f^\bullet\) denotes the complex of relative regular differential forms; see \cref{sec:relative-forms}. The absolute version of this complex \(\omega_{X}^\bullet\) was introduced by Barlet \cite{Bar78} and Kersken \cite{Ker84}, independently. The stalk at \(x \in X\) of \(\shH^n(f_* \omega_f^\bullet)\) coincides with the generalized Brieskorn lattice \(H''_{f,x}\) introduced by Greuel \cite{Gre75}, see \cref{prop:cohomology-relative}. As a corollary of \cref{thm:main} and the results in \textit{loc.~cit.}, one can deduce that \(\shH^0 \shG_f^\bullet\) is a regular holonomic \(\Dmod\)-module, see \cref{sec:coh-reg}.

\jump

In general, Hodge filtration \(F^H_k\) in \(\shH^0 \shG_f^\bullet\) is only contained in the order-Ext filtration \(F_k\). It is a recent result due to Musta\c{t}\v{a} and Popa \cite{MP22}, that the coincidence at level zero of both filtrations in local cohomology is equivalent to the singularities being Du Bois. If we call a smoothing Du Bois when \((X, x)\) has Du Bois singularities, we have

\begin{proposition} \label{prop:hodge}
   If \(f : (X, x) \longrightarrow (\CC, 0)\) is a Du Bois smoothing of a positive-dimensional ICIS \((Z, x)\), then 
   \[
    F^H_0 \shH^0 \shG_f^\bullet \simeq \shH^n(f_* \omega_f^\bullet).
   \]
\end{proposition}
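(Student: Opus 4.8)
The plan is to deduce the statement from \cref{thm:main} together with the cohomological characterization of Du Bois singularities. By \cref{thm:main}, under the isomorphism $\shH^0 \shG_f^\bullet \simeq \shH^n(f_* \omega_f^\bullet) \otimes_{\CC} \CC[\partial_t]$ the order-Ext filtration $F_k$ is identified with the filtration by the order of $\partial_t$; in particular the level-zero piece $F_0 \shH^0 \shG_f^\bullet$ is the $\partial_t$-degree-zero summand, which is exactly $\shH^n(f_* \omega_f^\bullet)$. Since the Hodge filtration always satisfies $F^H_k \subseteq F_k$, we automatically have $F^H_0 \shH^0 \shG_f^\bullet \subseteq \shH^n(f_* \omega_f^\bullet)$, so the entire content of the proposition is the reverse inclusion under the Du Bois hypothesis.

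To produce that reverse inclusion I would descend to the local cohomology module $\Mmod$ associated with $X$ that defines $\shG_f^\bullet$. Writing the Gauss-Manin system as the $\Dmod$-module direct image $f_+ \Mmod$, I would check that both filtrations on $\shH^0 \shG_f^\bullet$ arise from filtered direct images along the relative de Rham complex computing $f_+$: the Hodge filtration from the mixed Hodge module $(\Mmod, F^H_\bullet \Mmod)$, and the order filtration from the good filtration $F_\bullet \Mmod$ coming from the order-Ext structure of local cohomology. On the order side this is already the content of \cref{thm:main}, which tells us that this direct image is precisely the $\partial_t$-order filtration; on the Hodge side it is Saito's construction of the pushforward of a Hodge module.

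The key input is the theorem of Musta\c{t}\v{a} and Popa \cite{MP22}: the smoothing being Du Bois, i.e.\ $(X, x)$ having Du Bois singularities, is equivalent to the coincidence at level zero $F^H_0 \Mmod = F_0 \Mmod$ of the Hodge and order filtrations on the local cohomology module. Granting this equality on $\Mmod$, I would propagate it through the direct image: by the strictness of the direct image of the filtered $\Dmod$-module underlying a Hodge module, the level-zero piece of the Hodge filtration on $\shH^0 f_+ \Mmod$ is computed by $F^H_0 \Mmod$ along the de Rham complex, and likewise for the order filtration, so that $F^H_0 \shH^0 \shG_f^\bullet = F_0 \shH^0 \shG_f^\bullet$. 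Combined with the identification $F_0 \shH^0 \shG_f^\bullet \simeq \shH^n(f_* \omega_f^\bullet)$ from \cref{thm:main}, this gives the result.

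I expect the main obstacle to be this last transport step. It requires, first, verifying that the two filtrations on $\shG_f^\bullet$ genuinely are the filtered direct images of $F^H_\bullet \Mmod$ and $F_\bullet \Mmod$ with compatible index conventions, and second, controlling the shift of filtration index introduced by each application of $\partial_t$ across the relative de Rham complex, so that $\partial_t$-degree zero on $\shG_f^\bullet$ corresponds exactly to level zero on $\Mmod$. It is precisely the strictness of Saito's direct image that guarantees no higher-level contributions collapse into $F^H_0$, and hence that the level-zero equality on $\Mmod$ forces equality on $\shH^0 \shG_f^\bullet$ rather than merely the a priori inclusion $F^H_0 \subseteq F_0$.
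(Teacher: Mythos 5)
Your reduction is correct --- by \cref{thm:main} and the inclusion \eqref{eq:inclusion} the containment \(F^H_0 \shH^0 \shG_f^\bullet \subseteq \shH^n(f_* \omega_f^\bullet)\) is automatic, so only the reverse inclusion is at stake --- and you identify the same key input as the paper, namely the Musta\c{t}\v{a}--Popa theorem that Du Bois is equivalent to \(F^H_0 = E_0\) at level zero on local cohomology. The gap is the transport step, exactly where you predicted it. First, the appeal to Saito's direct image theorem is not available here: \(f\) is a good representative of a germ and is not proper, and in this paper the Hodge filtration on \(\shG_f^\bullet\) is \emph{defined} (\cref{sec:filtrations}) as the image filtration of \(\derR f_*(F^H_k \shK^\bullet)\), not obtained from a Hodge-module pushforward; strictness, even if one had it, would only say that the filtration on cohomology is such an image filtration, which is already the definition, and the same remark applies to the order side. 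Second, and more seriously, your claim that the level-zero piece of the pushforward ``is computed by \(F^H_0 \Mmod\)'' is false because of the de Rham shift built into \eqref{eq:F} and its Hodge analogue: \(F^H_0 \shK^p\) contains all components \(F^H_i \shH^r_X(\Omod_U) \otimes \Omega^{d+p}_U \otimes \partial_t^j\) with \(i + j \leq d + p\), so the higher pieces \(F^H_i\), \(i \geq 1\), do enter at level zero of the complex, and there the Du Bois hypothesis gives nothing: by \cite[9.6]{MP22}, \(F^H_i = O_i\) for all \(i\) holds only when \(X\) is smooth. So no strictness principle can ``guarantee no higher-level contributions collapse into \(F^H_0\)''; those contributions are present in \(F_0 \shK^\bullet\) and \(F^H_0\shK^\bullet\) by definition, and the real question is whether they produce cohomology classes not representable at order zero.

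What closes the gap --- and what the paper's one-line deduction in \cref{sec:rational} silently rests on --- is the explicit reduction in the proof of \cref{prop:cohomology-derham}: every class in \(F_0 \shH^0(\shK^\bullet)\) admits a representative \(\bar{c}_{f}(\omega) \otimes \partial_t^0\) with \(\omega\) a section of \(\omega_f^n\), i.e., a representative lying in the order-zero part \(E_0 \shH^r_X(\Omega^d_U)\). Under the Du Bois hypothesis this equals \(F^H_0 \shH^r_X(\Omega^d_U)\), so the representative is a section of \(F^H_0 \shK^0\) and its class lies in \(F^H_0 \shH^0 \shG_f^\bullet\). This yields \(\shH^n(f_* \omega_f^\bullet) = F_0 \shH^0 \shG_f^\bullet \subseteq F^H_0 \shH^0 \shG_f^\bullet\), hence the proposition, with no input from Saito's pushforward whatsoever. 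In short: your skeleton (automatic inclusion, plus \cite{MP22} at level zero, plus transport through the direct image) matches the paper's, but as written the decisive step is carried by an inapplicable and in any case insufficient strictness statement, where it should be carried by the representative-level computation already established in the proof of \cref{thm:main}.
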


Since Du Bois singularities deform, if \((Z, x)\) has Du Bois singularities, any smoothing is Du Bois. However, having a Du Bois smoothing is more general than the exceptional fiber being Du Bois; for instance, any hypersurface singularity has a Du Bois smoothing. 

\jump

Another important property of the Brieskorn lattice in the hypersurface case is that it has a microlocal structure, that is, \(\shH^n(f_* \omega^\bullet_f)\) is a module over the ring \(\CC\{\{\partial_t^{-1}\}\}\) of microlocal differential operators with constant coefficients. For an ICIS, we can show that this property is related to smoothings having rational singularities.

\begin{proposition} \label{prop:microlocal}
    If \(f : (X, x) \longrightarrow (\CC, 0)\) is a rational smoothing of an ICIS \((Z, x)\), then the Brieskorn lattice \(\shH^n(f_* \omega_f^\bullet)\) has structure of \(\CC\{\{\partial_t^{-1}\}\}\)-module.
\end{proposition}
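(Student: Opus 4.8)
The plan is to realize the action of \(\partial_t^{-1}\) on the Brieskorn lattice \(\shH^n(f_*\omega_f^\bullet)\) by an integration-by-parts operator, to pin down the rational hypothesis as exactly what makes this operator preserve the lattice, and only then to upgrade the resulting \(\CC[\partial_t^{-1}]\)-action to a convergent one. First I would recall the presentation of the Gauss--Manin connection coming from the short exact sequence relating absolute and relative regular differentials, of the form
\[
0 \longrightarrow \omega_f^{\bullet-1} \xrightarrow{\ df\wedge\ } \omega_X^\bullet \longrightarrow \omega_f^\bullet \longrightarrow 0,
\]
whose connecting morphism on \(f_*\)-cohomology is the operator \(\partial_t\). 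Inverting it on classes that admit an absolute primitive produces \(\partial_t^{-1}\): a class of \(\shH^n(f_*\omega_f^\bullet)\) represented by a relative form \(\eta\) with \(d\eta = df\wedge\xi\) is sent to the class of \(\xi\). This is the exact analogue of the hypersurface formula \(\partial_t^{-1}[d\eta] = [df\wedge\eta]\), which there is defined on the entire lattice precisely because the holomorphic de Rham complex of the smooth ambient \((\CC^{n+1},x)\) is exact in positive degrees, so every ambient top-form is a differential.

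The heart of the matter is that, for an ICIS of codimension \(>1\), the total space \(X\) is singular, so the analogous absolute exactness can fail and \(\partial_t^{-1}\) need not land back inside \(\shH^n(f_*\omega_f^\bullet)\). The key step, which I expect to be the main obstacle, is to show that rational singularities of \((X,x)\) supply exactly the vanishing needed for \(\partial_t^{-1}\) to be an endomorphism of the lattice. I would extract this from the long exact sequence of the displayed triangle after applying \(f_*\): it relates the cokernel of \(\partial_t\) on \(\shH^n(f_*\omega_f^\bullet)\) to the top-degree absolute cohomology \(\shH^{n+1}(f_*\omega_X^\bullet)\), so surjectivity of \(\partial_t\) onto the lattice — equivalently \(\partial_t^{-1}\,\shH^n(f_*\omega_f^\bullet) \subseteq \shH^n(f_*\omega_f^\bullet)\) — follows from the vanishing of that term. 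The genuine content is then to prove that rationality forces this Grauert--Riemenschneider-type vanishing for the regular (dualizing) differentials on the \((n+1)\)-dimensional space \(X\). This is the precise place where one must use \enquote{rational} rather than merely \enquote{Du Bois}: by \cref{prop:hodge} the weaker Du Bois condition already identifies \(F^H_0\) with \(\shH^n(f_*\omega_f^\bullet)\), but it does not deliver stability under \(\partial_t^{-1}\).

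Finally, once \(\partial_t^{-1}\) is established as an endomorphism of the lattice, I would promote the \(\CC[\partial_t^{-1}]\)-action to a \(\CC\{\{\partial_t^{-1}\}\}\)-action. Here I would use that the stalk \(\shH^n(f_*\omega_f^\bullet)_x\) is Greuel's lattice \(H''_{f,x}\), a free \(\CC\{t\}\)-module of finite rank \cite{Gre75}, together with the regularity of \(\shH^0\shG_f^\bullet\) deduced from \cref{thm:main}. Regularity of the Gauss--Manin connection yields the growth estimates on the matrices of \(t\) and \(\partial_t^{-1}\) in a \(\CC\{t\}\)-basis that guarantee convergence of any series \(\sum_{k\geq 0} a_k\,\partial_t^{-k}\) applied to a lattice element, exactly as in the hypersurface treatment of \cite{Pham79, Sai89}. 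I expect this last step to be routine given finiteness and regularity, so that the substantive work is concentrated in the cohomological vanishing of the middle paragraph.
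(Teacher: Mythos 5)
Your proposal breaks at both of its load-bearing steps. First, the sequence \(0 \to \omega_f^{\bullet-1} \xrightarrow{\,\dd f\wedge\,} \omega_X^\bullet \to \omega_f^\bullet \to 0\) is not exact, already when \(X\) is smooth: the natural map \(\omega_X^n \to \omega_f^n\) factors through \(\Omega^n_f\), its image is only the torsion-free part \(\check{\Omega}^n_f\), and its cokernel is \(\textnormal{cotor}\,\Omega^n_{f,x} \simeq \Omod_{\shC_f,x}\), of length \(\nu(D_f,0) > 0\) (see the discussion after \cref{lemma:torsion} and \eqref{eq:short-exact-lattice}). The exact Brieskorn-type sequence exists for the K\"ahler differentials \(\Omega^\bullet_f\), not for the regular forms, which are \(j_*\)-extensions from the smooth locus, and \(j_*\) is only left exact; \cref{rmk:base-change} is precisely a warning that such naive exactness expectations for \(\omega_f^\bullet\) fail. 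Second, and more fatally, the vanishing you hope to extract from rationality is false. By \eqref{eq:vanishing2}, \(\dim_\CC H^{n+1}(\omega^\bullet_{X,x}) - \dim_\CC H^{n}(\omega^\bullet_{X,x}) = \mu(X,x) - \tau(X,x)\), so \(H^{n+1}(\omega^\bullet_{X,x})=0\) would force \(\mu(X,x)=\tau(X,x)\), which for hypersurface germs means quasi-homogeneity by K.~Saito's criterion. Rational singularities need not be quasi-homogeneous: for a non-quasi-homogeneous plane curve germ \(g(u,w)\), the suspension \(X=\{x_1^2+x_2^2+x_3^2+g(u,w)=0\}\) has minimal exponent \(3/2+\tilde{\alpha}(g)>1\), hence is rational, while \(\mu(X,x)>\tau(X,x)\); any function on such an \(X\) with isolated critical point gives a rational smoothing of an ICIS for which your key vanishing fails. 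There is also an unaddressed well-definedness issue: the inversion \([\eta]\mapsto[\xi]\) with \(\dd\eta=\dd f\wedge\xi\) carries an ambiguity living in \(H^n(\omega^\bullet_{X,x})\), which likewise need not vanish for rational \(X\).

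The paper proves the proposition by an entirely different, transcendental mechanism, and rationality enters at a different spot: the period integrals \eqref{eq:integrals} are pulled back to a log resolution \(\pi\), where the trace isomorphism \eqref{eq:trace} --- this is the only use of rationality --- guarantees that \(\pi^*\omega\) is a holomorphic top form on the smooth \(\widetilde{X}\); Malgrange's growth bound, in the generalized form of \cref{rmk:malgrange}, then yields the \(V\)-filtration estimate \(\shH^n(f_*\omega^\bullet_f)\subset V^{>-1}\shH^0_f\), and \cref{lemma:microlocal} supplies the \(\CC\{\{\partial_t^{-1}\}\}\)-structure on \(V^{>-1}\), hence on the lattice. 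Your final step --- promoting a \(\CC[\partial_t^{-1}]\)-action to a convergent one via regularity and finiteness of \(H''_{f,x}\) --- is sound and is exactly what \cref{lemma:microlocal} packages, but it only becomes available once stability of the lattice under \(\partial_t^{-1}\) is established, which is the part your plan does not deliver; to repair it you would have to replace your cohomological vanishing by a growth estimate of Malgrange type, and that is where the analytic input through the resolution appears unavoidable.
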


Rational singularities are Du Bois, so the same relation between the singularities of \((Z, x)\) and \((X, x)\) holds. However, it follows from an inversion of adjunction result due to Schwede \cite{Sch07} that, in this setting, if the exceptional fiber has Du Bois singularities, then any smoothing is rational. Therefore, it is enough to assume that \((Z, x)\) is Du Bois for both \cref{prop:hodge,prop:microlocal} to hold.

\jump

Finally, we show the following relation between the Gauss-Manin system and the theory of \(b\)-functions. Assume \(X\) is embedded in a complex manifold \(U\), and let \(F \in \Omod_{U,x}\) such that \(\restr{F}{X} = f\). Consider the (local) \(b\)-function \(b_{\delta, f}(s)\) defined by the functional equation
\[
    P(s) \delta F^{s+1} = b_{\delta, f}(s) \delta F^s,
\]
where \(\delta\) is the local cohomology class of degree zero defined by a set of generators of the analytic space \(X\). Denote by \(\tilde{b}_{\delta, f}(s) = b_{\delta, f}(s) / (s + 1)\) the reduced \(b\)-function. These \(b\)-functions were studied by Torrelli in \cite{Tor05}. However, we will make use of a third polynomial \(\widehat{b}_{\delta, f}(s)\) sitting between these two that was introduced in \cite{Tor09}, see \cref{sec:bfunc} for the precise definition.

\begin{theorem} \label{thm:malgrange}
    Let \(f : (X, x) \longrightarrow (\CC, 0)\) be a rational smoothing of an ICIS \((Z, x)\). Then the minimal polynomial of \(-\partial_t t\) in \(\widetilde{H}_{f,x}'' / t \widetilde{H}_{f,x}''\) is equal to \(\widehat{b}_{\delta, f}(s)\).
\end{theorem}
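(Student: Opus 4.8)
The plan is to transpose Malgrange's classical identification of the reduced Bernstein--Sato polynomial with the minimal polynomial of the monodromy operator on the saturated Brieskorn lattice to the ICIS setting, routing everything through the local-cohomology realization of the Gauss--Manin system furnished by \cref{thm:main}. The starting object is the $\Dmod_U[s]$-module $N = \Dmod_U[s]\, \delta F^s$ generated by the class $\delta F^s$, whose functional equation defines Torrelli's polynomial $\widehat{b}_{\delta, f}(s)$ as in \cite{Tor09}. The first step is to set up the dictionary $s \leftrightarrow -\partial_t t$ relating the $s$-action on $N$ to the connection on the Gauss--Manin system. Pushing forward along the graph $t = F$ and using that $\delta$ is the local cohomology class of the generators of $X$, I would identify the microlocalization of $N$ (obtained by inverting $\partial_t$) with the microlocal Gauss--Manin module of $f$, in such a way that the $s$-action becomes $-\partial_t t$ and multiplication by $F$ becomes multiplication by $t$. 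By \cref{thm:main} this module is $\shH^n(f_* \omega_f^\bullet) \otimes_\CC \CC[\partial_t]$ with the order-Ext filtration equal to the order filtration in $\partial_t$, and under the identification the Brieskorn lattice $H''_{f,x}$ is generated by the image of $\delta F^s$.

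Second, I would pass to the microlocal picture provided by \cref{prop:microlocal}: since the smoothing is rational, $\shH^n(f_* \omega_f^\bullet)$ is a $\CC\{\{\partial_t^{-1}\}\}$-module, so $\partial_t^{-1}$ acts and the saturated lattice $\widetilde{H}''_{f,x}$ --- the smallest $\CC\{\{\partial_t^{-1}\}\}$-submodule containing $H''_{f,x}$ and stable under $t\partial_t$ --- is a well-defined finite extension of $H''_{f,x}$. The operator $-\partial_t t$ then acts on the finite-dimensional $\CC$-vector space $\widetilde{H}''_{f,x}/t\widetilde{H}''_{f,x}$, and by construction its minimal polynomial is governed by the Kashiwara--Malgrange $V$-filtration on the microlocal module: on each graded piece $\Gr_V$ the operator $t\partial_t$ acts with a single generalized eigenvalue, and the saturation is precisely the device making $\widetilde{H}''_{f,x}/t\widetilde{H}''_{f,x}$ isomorphic, $-\partial_t t$-equivariantly, to the relevant graded quotient of $V^\bullet$.

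Third, I would match this minimal polynomial with $\widehat{b}_{\delta, f}(s)$. The point is that Torrelli's intermediate polynomial is exactly the minimal polynomial of $s$ on the graded piece of $N$ detected by $\delta F^s$ --- the quotient $V^0 N / V^{>0} N$ for the $V$-filtration on $N$ --- as opposed to the full $b_{\delta, f}$ or its reduction $\tilde{b}_{\delta, f} = b_{\delta, f}/(s+1)$. Transporting this statement through the dictionary of the first step turns it into the minimal polynomial of $-\partial_t t$ on the corresponding graded quotient, which the second step identifies with $\widetilde{H}''_{f,x}/t\widetilde{H}''_{f,x}$.

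The step I expect to be the main obstacle is this final comparison. Unlike in the hypersurface case, $\delta$ is a local cohomology class of codimension equal to the number of defining equations of $X$, so the passage from the $s$-action on $N$ to $-\partial_t t$ on the Gauss--Manin system cannot be read off from a single Bernstein--Sato equation and must instead be controlled through the order-Ext filtration of \cref{thm:main}. The delicate part is to verify that the microlocal structure of \cref{prop:microlocal} and the saturation interact with this filtration so that the two $V$-filtrations --- the one coming from $N$ and the one intrinsic to the saturated Brieskorn lattice --- coincide on the graded pieces in play, and to keep track of the $(s+1)$-type factors that distinguish $b_{\delta, f}$, $\widehat{b}_{\delta, f}$ and $\tilde{b}_{\delta, f}$. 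Establishing this compatibility is where the substance of the proof lies.
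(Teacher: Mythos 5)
Your overall strategy --- transposing Malgrange's argument via the graph embedding, the dictionary $s \leftrightarrow -\partial_t t$, the identification of the Gauss--Manin system from \cref{thm:main}, and the microlocal structure from \cref{prop:microlocal} --- matches the paper's route in outline. But there is a genuine gap at the decisive point, and you have in effect flagged it yourself without filling it. You assert that $\widehat{b}_{\delta,f}(s)$ ``is exactly the minimal polynomial of $s$ on $V^0 N/V^{>0}N$.'' That is not its definition and is nowhere justified: $\widehat{b}_{\delta,f}(s)$ is defined by the functional equation $\widehat{b}_{\delta,f}(s)\,\delta F^s \in \Dmod_{U,x}(J_{\underline{F},F},F)\,\delta F^s$, i.e., it is the minimal polynomial of $s$ on the quotient $L_{f,x} = \Dmod_{U,x}[s]\delta F^s / \Dmod_{U,x}[s](J_{\underline{F},F},F)\delta F^s$ involving the Jacobian ideal. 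Relating this Jacobian-ideal quotient to the saturated lattice is precisely the content of the theorem, so taking the $V$-filtration characterization as a starting point makes the argument circular. Your closing paragraph concedes that the comparison of the two pictures is ``where the substance of the proof lies,'' but no mechanism for carrying it out is supplied.

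Concretely, the paper's proof does the following, none of which appears in your proposal. First, \cref{lemma:origin} shows $\shN_{f,x}/\shM_{f,x}$ is supported at the origin, which combined with the vanishings of \cref{prop:cohomology-derham} yields an injection $H^0(\DR_U \shM_{f,x}) \hookrightarrow H''_{f,x}[\partial_t]$ whose image is the saturation \eqref{eq:saturation}; this is how $\widetilde{H}''_{f,x}$ enters, not through a $\CC\{\{\partial_t^{-1}\}\}$-closure (the saturation $\sum_k (t\partial_t)^k H''_{f,x}$ is defined without any microlocal structure). Second, since $L_{f,x}$ is $\Dmod_{U,x}$-finite and supported at the origin, $\widehat{b}_{\delta,f}(s)$ is the minimal polynomial of $s$ on $H^0(\DR_U L_{f,x})$, and one must show the surjection $\widetilde{H}''_{f,x}/t\widetilde{H}''_{f,x} \to H^0(\DR_U L_{f,x})$ is injective, i.e., that $J_{\underline{F},F}\,\Omega^d_{U,x}[s]\,\delta F^s$ dies in $\widetilde{H}''_{f,x}/t\widetilde{H}''_{f,x}$. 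This is where rationality is actually used: writing $\omega = \dd F \wedge \dd F_1 \wedge \cdots \wedge \dd F_r \wedge \eta$ one computes $\partial_t([\omega]) = [\dd F_1 \wedge \cdots \wedge \dd F_r \wedge \dd\eta] \in H''_{f,x}$, so $\omega \in \partial_t^{-1}H''_{f,x}$, and the invertibility of $\partial_t$ and bijectivity of $t$ granted by \cref{prop:microlocal} give $t\widetilde{H}''_{f,x} \simeq \widetilde{F}_x$, the saturation of $\partial_t^{-1}H''_{f,x}$, which absorbs $\omega$; the $s$-dependence is then reduced to $s = -1$ via the identity \eqref{eq:diff-op}, which produces $(s+1)m_{k_0,\dots,k_r}(\underline{F},F)\delta - \Delta_{k_0,\dots,k_r}(\underline{F})F\delta$ in the annihilator of $F^s$. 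Without these steps --- the support lemma, the de Rham identification of the saturation, and the explicit Jacobian-form computation --- the proposal does not constitute a proof.
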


Here \(\widetilde{H}_{f,x}''\) denotes the saturation of \(H''_{f,x}\) with respect to \(t\partial_t\), see \eqref{eq:saturation}. In particular, the degree of \(b_{\delta, f}(s)\) is bounded by the Milnor number \(\mu(Z, x)\) + 1. \cref{thm:malgrange} generalizes a classical result due to Malgrange \cite{Mal75} in the hypersurface case. 

\jump

The relation of the results presented in this work with the spectrum and the minimal exponent of an ICIS \((Z, x)\) will be treated in a future work.

\jump

This work is organized as follows. In \cref{sec:icis} we review basic results and properties of ICIS. The Gauss-Manin system is defined in \cref{sec:GM}. The relation with the generic monodromy of an ICIS is discussed in \cref{sec:generic-monodromy}. Good filtrations on local cohomology are treated on \cref{sec:graph,sec:filtrations}. The definition and properties of absolute and relative differentials forms are the content of \cref{sec:regular-forms,sec:relative-forms,sec:normal,sec:regular-icis}. \cref{thm:main} and its consequences are proven in \cref{sec:cohomology,sec:coh-reg}. The role of rational and Du Bois singularities is explored in \cref{sec:vfilt,sec:rational}. Finally, for relation with the theory of \(b\)-functions see \cref{sec:bfunc}.

\subsection*{Acknowledgements} We would like to thank J. \`Alvarez-Montaner, D. Bath, and M. Torrecillas for useful discussion during the development of this work. We are indebted to M. Musta\c{t}\v{a} for pointing out some mistakes in an early version of this work and the inversion of adjunction result from \cite{Sch07}.

\subsection{Isolated complete intersection singularities} \label{sec:icis}

Let \((Z, x)\) be an isolated complete intersection singularity of dimension \(n\), or ICIS for short. Let \(f : (X, x) \longrightarrow (\CC^k, 0)\) be a flat morphism defining a deformation of \((Z, x)\), where \((X, x)\) is a germ of an analytic space. The flatness of \(f\) and the fact that the base of the deformation is smooth imply that \((X, x)\) is also a complete intersection of dimension \(n + k\), see \cite[6.10]{Loo84}. 

\jump

Let \(f : X \longrightarrow S\) be a morphism of analytic spaces representing the above morphism of germs. Denote by \(X_t = f^{-1}(t) \), \(t \in S\), the fibers of \(f\). A deformation \(f : (X, x) \longrightarrow (\CC^k, 0)\) of an ICIS is called a \(k\)-parameter smoothing if there exists a representative such that the generic fibers \(X_t, t \in S \setminus \{0\}\), are smooth. Notice that, by Sard's theorem, this is equivalent to \(X\) having only an isolated singularity at \(x\). A 1-parametric smoothing will simply be called a smoothing of \((Z, x)\).

\jump

We can always embed \(X\) in a germ of a complex manifold \(U\). Denote by \(F : U \longrightarrow S\) any holomorphic function such that \(F|_X = f\). We will always denote \(r = \codim_U X\) so that the dimension \(d\) of \(U\) equals \(n + r + k\). 

\jump

Assume that for each point of \(X_0 \setminus \{x\}\), \(X\) is not singular and \(F|_X\) is a submersion. Choose \(r : X \longrightarrow \RR_{\geq 0}\) a nonnegative real analytic function such that \(r^{-1}(0) \cap X_0 = \{x\}\). Such a function \(r\) is said to define the point \(x \in X\). For any \(\epsilon \in \RR_{\geq 0}\), let
\[
B_{\epsilon} = \{ y \in X \ |\ r(y) < \epsilon \}, \qquad S_{\epsilon} = \{ y \in X\ |\ r(y) = \epsilon \}.
\]
Choosing \(\epsilon > 0 \) small enough and shrinking \(S\) in such a way that it is contractible, the morphism
\[
f : X \cap B_{\epsilon} \cap f^{-1}(S) \longrightarrow S
\]
is called a good representative of the germ if \(S_\epsilon\) intersects \(f^{-1}(t)\) transversally, for all \(s \in S\), and \(X_0\) intersects \(S_\eta\) transversally, for all \(0 < \eta \leq \epsilon\). 

\jump

If \(\shC_f\) denotes the set of critical points of any good representative, and \(D_f = f(\shC_f) \subseteq S\) the discriminant of \(f\), the restriction of any good representative on the complement of the critical locus defines a smooth fiber bundle on the complement of the discriminant, see \cite[2.8]{Loo84}. This fiber bundle is usually known as the Milnor fibration, and the generic fiber \(X_t, t \in S \setminus D_f,\) as the Milnor fiber.

\jump

The existence of versal deformations implies that Milnor fibers from different smoothings are diffeomorphic, and the Milnor fiber is always homotopy equivalent to a bouquet of spheres. The number \(\mu(Z, x) = \dim_{\CC} H_{n}(X_t, \CC)\) is called the Milnor number of \((Z, x)\). It can be computed algebraically by means of the L\^e-Greuel formula \cite[5.3]{Gre75}, \cite[3.6.4]{Tra74}: for any smoothing \(f : (X, x) \longrightarrow (\CC, 0)\) of an ICIS \((Z, x)\), one has
\begin{equation} \label{eq:le-greuel}
\mu(X, x) + \mu(Z, x) = \nu(D_f, 0)
\end{equation}
where \(\nu(D_f, 0)\) is the multiplicity of the discriminant of \(D_f\) at the origin.

\jump

The restricted morphism \(f|_{\shC_f} : (\shC_f, x) \longrightarrow (\CC, 0)\) is a finite morphism. As a consequence, the discriminant \(D_f\) is a hypersurface in \(S\), see \cite[4.8]{Loo84}, and it can be endowed with a non-reduced structure in such a way that it is compatible with base change, see \cite[\S 4.E]{Loo84}.

\subsection{The Gauss-Manin system} \label{sec:GM}

When \(f : U \longrightarrow S\), with \(U\) smooth, is a good representative of an isolated hypersurface singularity, the Gauss-Manin system is defined as the \(\Dmod\)-module theoretic direct image of the structure sheaf \(\Omod_U\) along \(f\), see \cite[\S 15]{Pham79}. Similarly, in the case where \(f : X \longrightarrow S\) is a good representative of a smoothing of an ICIS, we define the Gauss-Manin system as the complex of \(\Dmod_S\)-modules,
\begin{equation}\label{eq:GM}
    \GM = F_+ \shH^r_{X}(\Omod_U),
\end{equation}
where \(F : U \longrightarrow S\) is a holomorphic function with \(U\) smooth such that \(F|_X = f\). Denoting by \(i : X \xhookrightarrow{\quad} U\) the closed embedding, by \cite[1.1]{Meb76}, the Gauss-Manin system satisfies
\[
    \DR_S\GM \simeq \derR F_* \DR_U \shH^r_{X} (\Omod_U) \simeq \derR F_* i_* \shCC_{X}[r] \simeq \derR f_* \shCC_{X}[r],
\]
in the derived category \(D^b(\underline{\CC}_S)\). We define the \(k\)-th Gauss-Manin system \(\shH^k_f\) as the \(k\)-th cohomology sheaf \(\shH^k \GM\) of \(\GM\). We will mostly care about \(\shH^0_f\). The intrinsic nature of \(\shG_f^\bullet\) will become clear after \cref{prop:cohomology-derham}.

\begin{lemma}
Assume that \(F : U \longrightarrow S\) is a locally trivial fibration, \(X \subseteq U\) is a complex manifold of codimension \(r\) and denote \(f : X \longrightarrow S\) the restriction of \(F\) to \(X\). Then 
\[
\shG_f^{\bullet} \simeq (\derR f_* \shCC_X \otimes_{\shCC} \Omod_S)[n].
\]
\end{lemma}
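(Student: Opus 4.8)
The plan is to reduce the problem to a \(\Dmod\)-module direct image of the structure sheaf of \(X\) and then to the classical Gauss--Manin construction. Since \(X\subseteq U\) is a smooth closed submanifold of codimension \(r\), the sheaf \(\Omod_U\) has no local cohomology along \(X\) outside degree \(r\), so \(\shH^j_X(\Omod_U)=0\) for \(j\neq r\), and Kashiwara's equivalence identifies the remaining local cohomology sheaf with the \(\Dmod\)-module direct image under the closed embedding \(i\colon X\hookrightarrow U\),
\[
    \shH^r_X(\Omod_U)\simeq i_+\Omod_X .
\]
Substituting this into \eqref{eq:GM} and using the functoriality \(F_+\circ i_+\simeq (F\circ i)_+\) of the direct image together with \(F\circ i=f\), I would obtain
\[
    \GM = F_+\shH^r_X(\Omod_U)\simeq F_+ i_+\Omod_X \simeq f_+\Omod_X ,
\]
so that the whole computation reduces to the direct image of \(\Omod_X\) along \(f\), which is now a submersion of relative dimension \(n=\dim X-\dim S\).

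For a submersion, \(f_+\Omod_X\) is the Gauss--Manin system of \(f\): its cohomology \(\Dmod_S\)-modules \(\shH^q(f_+\Omod_X)\) are the relative de Rham cohomology sheaves \(R^{q+n}f_*\Omega^\bullet_{X/S}\) equipped with the Gauss--Manin connection \(\nabla^{\mathrm{GM}}\), so that \(f_+\Omod_X\simeq(\derR f_*\Omega^\bullet_{X/S})[n]\) as an object of the derived category of \(\Dmod_S\)-modules, the connection arising from the filtration of the absolute de Rham complex \(\Omega^\bullet_X\) by \(f^*\Omega^\bullet_S\). Here the hypothesis of local triviality plays its role: it guarantees that the sheaves \(R^q f_*\shCC_X\) are local systems of finite rank, hence that the relative de Rham cohomology sheaves are locally free \(\Omod_S\)-modules carrying a regular flat connection, i.e. \(f_+\Omod_X\) is represented by a bounded complex of flat vector bundles.

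It then remains to compare this Gauss--Manin system with the Betti complex \(\derR f_*\shCC_X\). I would invoke the equivalence between flat vector bundles on \(S\) and local systems, sending a flat bundle to its sheaf of horizontal sections and a local system \(L\) to \(L\otimes_{\shCC}\Omod_S\) with the connection \(\id\otimes\dd\). The relative holomorphic Poincaré lemma, valid because \(f\) is a submersion and thus locally a holomorphic projection, shows that the horizontal sections of \(R^q f_*\Omega^\bullet_{X/S}\) for \(\nabla^{\mathrm{GM}}\) are exactly the classes in \(R^q f_*\shCC_X\); equivalently, the natural morphism \(R^q f_*\shCC_X\otimes_{\shCC}\Omod_S\to R^q f_*\Omega^\bullet_{X/S}\) is a horizontal isomorphism. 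Under the exact horizontal-sections equivalence the complex \(f_+\Omod_X\) then corresponds to \(\derR f_*\shCC_X[n]\), which yields
\[
    \GM \simeq f_+\Omod_X \simeq \bigl(\derR f_*\shCC_X\otimes_{\shCC}\Omod_S\bigr)[n].
\]

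The main obstacle is the matching of the \(\Dmod_S\)-module structures, i.e. of the connections, rather than merely of the underlying \(\Omod_S\)-modules: one must check that \(\nabla^{\mathrm{GM}}\) corresponds, under the Poincaré-lemma quasi-isomorphism, to the canonical connection whose flat sections are the constant Betti classes, and then promote this from cohomology sheaves to an isomorphism of complexes in the derived category. I expect the horizontal-sections equivalence to handle the passage to complexes cleanly, so the crux is the horizontality of the comparison map. I would emphasize that, because the fibration is in general only \(C^\infty\)-locally trivial, there is no holomorphic product decomposition to exploit, and it is the intrinsic relative de Rham argument, together with the identification of flat sections with Betti cohomology, that makes the connections agree.
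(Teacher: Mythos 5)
Your proposal is correct and follows essentially the same route as the paper: Kashiwara's equivalence \(\shH^r_X(\Omod_U)\simeq \iota_+\iota^*\Omod_U\) to reduce \(\GM\) to the direct image of \(\Omod_X\) along the submersion \(f\) (the paper phrases the composition \(F_+\iota_+\simeq f_+\) via \(\DR_{U/S}(\iota_+\iota^*\Omod_U)\simeq \iota_*\DR_{X/S}\Omod_X\)), followed by the relative de Rham computation and the relative Poincar\'e lemma. The only point where the paper is cleaner is your final step: instead of matching \(\nabla^{\mathrm{GM}}\) with the canonical connection on each cohomology sheaf and then promoting to the derived category (the worry you flag yourself), it uses the relative Poincar\'e lemma at the level of complexes --- \(\Omega^\bullet_{X/S}\) is a resolution of \(f^{-1}\Omod_S\), see \cite[8.4]{Loo84} --- so that \(\derR f_*\Omega^\bullet_{X/S}\simeq \derR f_* f^{-1}\Omod_S\simeq \derR f_*\shCC_X\otimes_{\shCC}\Omod_S\), the last isomorphism (where local triviality enters) being \cite[8.1]{Loo84}; your argument closes the same way once you observe that the comparison map already exists at the chain level, so checking it on cohomology sheaves suffices.
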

\begin{proof}
Since \(F\) is a submersion, 
\[
    F_+ \shH^r_{X}(\Omod_U) = \derR F_*\big(\Dmod_{U \leftarrow S} \Ltensor_{\Dmod_U} \shH^r_{X}(\Omod_U)\big) \simeq \derR F_* \big(\DR_{U/S} \shH_{X}^r(\Omod_U)\big)
\]
by \cite[14.4.1]{Pham79}, where \(\DR_{U/S}\) denotes the relative de Rham complex. Denote \(\iota : X \xhookrightarrow{\quad} U\) the closed embedding of complex manifolds. The fact that \(\shH^r_{X}(\Omod_U) \simeq \iota_+ \iota^* \Omod_U \), see \cite[5.1]{MaTo04}, gives
\[
\DR_{U/S}\shH^r_{X}(\Omod_U) \simeq \DR_{U/S}(\iota_+ \iota^* \Omod_U) \simeq \iota_* \DR_{X/S} \Omod_X,
\]
by a relative version of \cite[4.2.5]{HTT08}. If the relative codimension of \(f\) is \(n\), by definition \(\DR_{X/S}\Omod_X = \Omega^{n + \bullet}_{X/S} \), so
\[
    \GM \simeq \derR F_* \iota_* \DR_{X/S} \Omod_X \simeq \derR f_* \Omega^{\bullet}_{X/S}\decal{n}.
\]
A relative version of the Poincar\'e Lemma implies that \(\Omega^\bullet_{X/S}\) is a resolution of \(f^{-1} \Omod_S\), see \cite[8.4]{Loo84}, hence
\[
    \derR f_* \Omega^\bullet_{X/S} \simeq \derR f_* f^{-1} \Omod_S \simeq \derR f_* \shCC_X \otimes_{\shCC} \Omod_S,
\]
where the last isomorphism follows from \cite[8.1]{Loo84}.
\end{proof}

\subsection{Generic monodromy} \label{sec:generic-monodromy}

Given a smoothing \(f : (X, x) \longrightarrow (\CC, 0)\) of an ICIS, we have a well-defined action of the fundamental group of \(\CC^*\) on the Milnor fibration. This gives a monodromy endomorphism on the Milnor fiber and its cohomology groups. However, in contrast with the hypersurface case, the Milnor fibrations defined by two smoothings are, in general, not isomorphic. In particular, the corresponding monodromy endomorphisms can have different eigenvalues.

\jump

Let \((Z, x)\) be an ICIS of dimension \(n\). For all \(i \geq 0\) define \(\mu_i(Z, x)\) to be the minimal Milnor number of an ICIS \((X_i, x)\) such that there exists a deformation \(f_i : (X_i, x) \longrightarrow (\CC^i, 0)\).  Then, we say that a \(k\)-parameter deformation \(f_k : (X_k, x) \longrightarrow (\CC^k, 0)\) of the ICIS \((Z, x)\) is called \(\mu_k\)-minimal if \(\mu(X_k, x) = \mu_k(Z, x)\). A \(\mu_1\)-minimal deformation of \((Z, x)\) will be called a \(\mu\)-minimal smoothing. Notice that \((X_1, x)\) is smooth, i.e. \(\mu_1(Z, x) = 0\), if and only if \((Z, x)\) is a hypersurface singularity.

\jump

It follows from \cite[1]{Para91} and the L\^e-Greuel formula \eqref{eq:le-greuel} that the Milnor fibrations of any two \(\mu\)-minimal smoothings are equivalent. Hence, the monodromy of a \(\mu\)-minimal smoothing only depends on \((Z, x)\), and we will call this the \emph{generic monodromy} of the ICIS. The generic monodromy of an ICIS is related to Verdier monodromy, see \cite[1.3.\textit{i}]{DMS11}. However, in general, their eigenvalues are not equal, see \cref{ex:2}.

\jump

A \(\mu\)-minimal smoothing can be constructed by restricting the semiuniversal deformation \(\Phi : (\mathscr{X}, x) \longrightarrow (S, 0)\) of \((Z, x)\) to a general line in the base \(S\) that intersects the discriminant \(D_\Phi\) transversally at the origin.

\begin{example}
Let \((Z, 0) \subseteq (\CC^3, 0)\) be the ICIS defined by the ideal \(I = (F_1, F_2)\), with 
\[F_1 = z^2 - xy, \qquad F_2 = x^2 + y^2 + z^2.\]
It has Milnor number \(\mu(Z, 0) \) and Tjurina number \(\tau(Z, 0) \) both equal to 5. Consider \(\Phi : (\mathscr{X}, 0) \longrightarrow (\CC^5, 0) \) the semiuniversal deformation of \((Z, 0)\), where \((\mathscr{X}, 0) \subseteq (\CC^3 \times \CC^5, 0) \) is defined by
\[ G_1 = z^2 -xy + t_0 y + t_1, \qquad G_2 = x^2 + y^2 + z^2 + t_2 z + t_3 y + t_4, \]
and \(\Phi\) is the restriction to \((\mathscr{X}, 0)\) of the projection to the second component of \(\CC^3 \times \CC^5\), having coordinates \(t_0, \dots, t_4\). The discriminant \((D_\Phi, 0) \subseteq (\CC^5, 0)\) of the semiuniversal deformation has multiplicity \(6\), and its tangent cone equals
\[
t_1^6 - 2t_1^5t_4 + \frac{1}{2}t_1^4t_4^2 + t_1^3t_4^3 - \frac{7}{16}t_1^2t_4^4 - \frac{1}{8}t_1t_4^5 + \frac{1}{16}t_4^6.
\]
If we consider now a general line in \((\CC^5, 0)\), it can be parameterized as
\[
\varphi : (\CC^1, 0) \longrightarrow (\CC^5, 0), \qquad t \mapsto (\alpha_0 t, \alpha_1 t, \alpha_2 t, \alpha_3 t, \alpha_4 t),
\]
for generic \(\alpha_0, \dots, \alpha_4 \in \CC\). The induced deformation \(\varphi^* \Phi : \varphi^*(\mathscr{X}, 0) \longrightarrow (\CC, 0)\) by the base change \(\varphi\) is a smoothing of \((Z, 0)\). By the invariance of the discriminant under base change, the tangent cone of its discriminant \((D, 0) \subseteq (\CC, 0)\) is \(P(\alpha_0, \dots, \alpha_4) t^6\) where,
\[ P(\alpha_0, \dots, \alpha_4) = \alpha_1^6 - 2\alpha_1^5\alpha_4 + \frac{1}{2}\alpha_1^4\alpha_4^2 + \alpha_1^3\alpha_4^3 - \frac{7}{16}\alpha_1^2\alpha_4^4 - \frac{1}{8}\alpha_1\alpha_4^5 + \frac{1}{16}\alpha_4^6. \]
The formula of L\^e-Greuel \eqref{eq:le-greuel} gives,
\[
    \mu(\varphi^*(\mathscr{X}, 0)) + \mu(Z, 0) = \nu(D, 0),
\]
so the Milnor number of \(\varphi^*(\mathscr{X}, 0)\) is 1 and \(\varphi^*\Phi\) is \(\mu_1\)-minimal smoothing of \((Z, 0)\) if and only if \(P(a_0, \dots, a_4) \neq 0\). 
\end{example}

In practice, to construct a \(\mu\)-minimal smoothing, it is enough to restrict any holomorphic function
\begin{equation} \label{eq:holomorphic-function}
f : (\CC^{n+r}, x) \longrightarrow (\CC^r, 0)
\end{equation}
defining \((Z, x)\) to a line in \(\CC^r\) intersecting the discriminant \(D_f\) of \(f\) transversally at the origin. This follows from the next lemma, which is probably already known, but we could not find a reference for it in the literature.

\begin{lemma}
Let \(\nu(Z, x)\) be the multiplicity of the discriminant of the semiuniversal deformation of \((Z, x)\). Then the discriminant of any holomorphic function \eqref{eq:holomorphic-function} defining \((Z, x)\) has also multiplicity \(\nu(Z, x)\).
\end{lemma}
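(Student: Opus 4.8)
The plan is to compare $D_f$ with the discriminant $D_\Phi$ of the semiuniversal deformation $\Phi : (\mathscr{X}, x) \longrightarrow (S, 0)$ by means of its classifying map. Since $f$ is a flat deformation of $(Z,x)$, versality produces a germ $\psi : (\CC^r, 0) \longrightarrow (S, 0)$ with $f \simeq \psi^* \Phi$, and by the compatibility of the discriminant with base change (\cite[\S 4.E]{Loo84}) one gets an equality of germs of divisors $D_f = \psi^{-1}(D_\Phi)$. Writing $g$ for a local equation of $D_\Phi$ at the origin, so that $\ord_0 g = \nu(Z,x)$ by definition, this reads $D_f = V(g \circ \psi)$, and hence $\nu(D_f, 0) = \ord_0(g \circ \psi)$. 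First I would record the easy inequality: since $\psi(0) = 0$, the pullback $g \circ \psi$ lies in $\mathfrak{m}_0^{\nu(Z,x)}$, so $\nu(D_f, 0) \geq \nu(Z,x)$, with equality exactly when the leading form $g_{\nu}$ of $g$ (the equation of the tangent cone $C_0 D_\Phi$) does not vanish identically on the image of the differential $\mathrm{d}\psi_0 : \CC^r \to T^1_{Z,x} = T_0 S$.

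The whole content of the lemma is thus the reverse inequality, i.e. the transversality $\im \mathrm{d}\psi_0 \not\subseteq C_0 D_\Phi$, and I would recast it concretely. The differential $\mathrm{d}\psi_0$ sends the coordinate directions of $\CC^r$ to the classes in $T^1_{Z,x}$ of the constant perturbations $f_i \mapsto f_i - t_i$, and restricting $f$ to a line $\ell = \CC\cdot u \subseteq \CC^r$ cuts out the $(n+1)$-dimensional ICIS $X_\ell = V(\lambda_1(f), \dots, \lambda_{r-1}(f))$, where $\lambda_1, \dots, \lambda_{r-1}$ span the linear forms vanishing on $u$, with smoothing map given by the remaining combination. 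Restricting both discriminants to generic lines and applying the Lê-Greuel formula \eqref{eq:le-greuel}, together with the equality $\nu(Z,x) = \mu_1(Z,x) + \mu(Z,x)$ coming from the realization of a $\mu$-minimal smoothing as a generic line section of $\Phi$, the sought equality $\nu(D_f, 0) = \nu(Z,x)$ becomes equivalent to $\mu(X_\ell, x) = \mu_1(Z,x)$ for generic $\ell$; that is, generic $\CC$-linear combinations of $f_1, \dots, f_r$ cut out an ICIS of minimal Milnor number.

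The inequality $\mu(X_\ell, x) \geq \mu_1(Z,x)$ is immediate from the definition of $\mu_1$, so the hard part is the opposite bound: the minimum must already be attained inside the net $\langle f_1, \dots, f_r\rangle_{\CC}$ spanned by the generators, and not merely inside the full ideal $(f_1, \dots, f_r)$. I expect this to be the main obstacle. Since $\mu(X_\ell, x)$ is upper semicontinuous in the coefficients of the combinations, it suffices to produce a single choice attaining $\mu_1(Z,x)$, and by the previous paragraph this amounts exactly to checking that some constant-perturbation direction avoids $C_0 D_\Phi$. I would therefore analyze the tangent cone of the discriminant through the finite map $\Phi|_{\shC_\Phi} : \shC_\Phi \to D_\Phi$, whose fundamental cycle pushes forward to that of $D_\Phi$, and rule out the degenerate possibility that every constant perturbation is tangent to $D_\Phi$ to higher order; equivalently, I would establish through a Bertini-type argument for the net $\langle f_1, \dots, f_r\rangle_\CC$, handling the common base point at $x$, that its generic member is as non-degenerate as a generic element of the ideal.

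Once this transversality is secured, $g_{\nu}$ does not vanish on $\im \mathrm{d}\psi_0$, hence $\ord_0(g \circ \psi) = \nu(Z,x)$ and therefore $\nu(D_f, 0) = \nu(Z,x)$, as desired.
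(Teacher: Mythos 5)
Your reduction is sound and matches the geometry underlying the paper's argument: versality gives \(D_f = \psi^{-1}(D_\Phi)\), hence \(\nu(D_f, 0) = \ord_0(g \circ \psi) \geq \nu(Z,x)\); and via generic line sections, base-change invariance of the discriminant, Lê--Greuel, and the identity \(\nu(Z,x) = \mu_1(Z,x) + \mu(Z,x)\), the lemma is indeed equivalent to the transversality \(\im \dd\psi_0 \not\subseteq C_0 D_\Phi\), i.e.\ to generic members of the net \(\langle f_1, \dots, f_r \rangle_{\CC}\) attaining \(\mu_1(Z,x)\). But you stop exactly at the point you yourself flag as ``the main obstacle'': nothing in the proposal proves that this particular \(r\)-plane of constant perturbations avoids the tangent cone. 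Semicontinuity only reduces you to exhibiting a single good constant direction, which is the same unproven statement, and a Bertini-type argument cannot supply it: Bertini yields genericity within the linear system you start from, and here the net maps onto one \emph{fixed} \(r\)-plane \(\im \dd\psi_0\) inside the \(\tau\)-dimensional space \(T^1_{(Z,x)}\); no amount of genericity inside that plane rules out the a priori possibility that the entire plane sits inside \(C_0 D_\Phi\). Likewise, the pushforward of the fundamental cycle of \(\shC_\Phi\) describes \(D_\Phi\) as a cycle but says nothing about the position of its tangent cone relative to the constant directions. So the proposal is a correct reduction plus an announced program, not a proof.

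The paper closes precisely this gap by a rescaling trick worth recording. A general line in the semiuniversal base --- whose direction is genuinely generic in \(T^1_{(Z,x)}\), so that transversality to \(C_0 D_\Phi\) comes for free --- has total space \(G_i = f_i + u_i(x)\,t\), where the coefficients \(u_i\) are \emph{units}. Since \(V(f_i + u_i t) = V(u_i^{-1} f_i + t)\), this generic direction becomes the \emph{diagonal} constant direction for the rescaled defining map \((u_1^{-1} f_1, \dots, u_r^{-1} f_r)\), which still defines \((Z,x)\). The sandwich ``multiplicity along any line through \(0\) \(\geq\) multiplicity of the hypersurface \(\geq \nu(Z,x)\) by versality,'' combined with the diagonal line section having multiplicity exactly \(\nu(Z,x)\), then forces the discriminant of the \(r\)-parameter family \(u_i^{-1} f_i + t_i = 0\) to have multiplicity \(\nu(Z,x)\); finally the paper passes from \((u_i^{-1} f_i)\) back to \((f_i)\) using that the respective equations define the same germs. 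In short: the transversality you need is never established directly for the constant net --- it is transported there by absorbing unit coefficients into the defining equations --- and this mechanism is the idea missing from your proposal.
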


\begin{proof}
Let \(f_1, \dots, f_r\) be the components of the morphism in \eqref{eq:holomorphic-function}. The total space of a semiuniversal deformation of \((Z, x)\) is given by the variety defined by
\[
    F_i = f_i + \sum_{j = 0}^{\tau} m_{i, j} t_j, \quad \textnormal{for} \quad i = 1, \dots, r,
\]
in \(\CC^{n+r} \times \CC^\tau\), where \(m_{i, j}\) form a monomial basis of the vector space \(T^1_{(Z, x)}\) of fist order deformations of \((Z, x)\). The projection from the second component defines the semiuniversal deformation of \((Z, x)\). The total space of the restriction to a general line in \(\CC^\tau\) is given by
\[
    G_i = f_i + u_i(x) t, \quad \textnormal{for} \quad i = 1, \dots, r,
\]
in \(\CC^{n+r} \times \CC\), where \(u_i = u_i(x), i = 1, \dots, r\), are units. However, the equations \(f_i + u_i t \) and \(u_i^{-1} f_i + t\) define the same germ at \(x\), so the multiplicity of the discriminant of the projection to the coordinate \(t\) is the same, and it equals \(\nu(Z, x)\) by construction.

\jump

By versality and invariance of the discriminant under base change, the multiplicity of the discriminant of the projection to \(t_1, \dots, t_r\) from \(u_i^{-1} f_i + t_i = 0, i = 1, \dots, r,\) must coincide with \(\nu(Z, x)\).  Since \(f_i\) and \(u_i^{-1} f_i\) define the same function germ, the same is true for the discriminant corresponding to \(f_i + t_i = 0, i = 1, \dots, r\); in other words, the discriminant of \eqref{eq:holomorphic-function} has multiplicity \(\nu(Z, x)\).
\end{proof}

\jump

Through the rest of this work we can always assume that the smoothings considered are \(\mu\)-minimal, and they are obtained from \eqref{eq:holomorphic-function} by considering the projection to \(t\) from the variety defined by \(f_i - \alpha_i t = 0, i = 1, \dots, r\), with \(\alpha_i \in \CC\) generic.

\subsection{The graph embedding} \label{sec:graph}
To compute a direct image, one typically factors the morphism \(F : U \longrightarrow S\) into a closed embedding and a projection, namely
\[
    F : U \xrightarrow{\ \ \iota\ \ } U \times S \xrightarrow{\ \ p\ \ } S,
\]
where \(\iota : U \longrightarrow U \times S, x \mapsto (x, F(x))\) is the graph embedding, and \( p : U \times S \longrightarrow S\) is the projection to the second component.

\begin{lemma} \label{lemma:graph-embedding}
With the notations above,
\begin{equation} \label{eq:graph-embedding}
    \iota_{+} \shH^r_{X}(\Omod_U) \simeq \shH^{r + 1}_{\Gamma_f}(\Omod_{U \times S}),
\end{equation}
where \(\Gamma_f\) denotes the graph of \(f : X \longrightarrow S \) in \(U \times S\).
\end{lemma}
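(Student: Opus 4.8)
\subsection*{Proof proposal}

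The plan is to realize the graph \(\Gamma_f\) as an intersection inside \(U \times S\) and to reduce the statement to the transitivity of local cohomology combined with the model computation for a graph embedding. Write \(q : U \times S \longrightarrow U\) for the projection, and recall that \(\iota\) is a section of \(q\) whose image is the smooth hypersurface \(\Gamma_F = \{t - F = 0\}\) (here \(S = \CC\), so \(\codim_{U\times S}\Gamma_F = 1\)). Since \(X \times S = q^{-1}(X)\), one has the set-theoretic identity \(\Gamma_f = \Gamma_F \cap (X \times S)\). I would therefore compute \(\derR\Gamma_{\Gamma_f}(\Omod_{U\times S})\), where \(\derR\Gamma_Z\) denotes the derived functor of sections supported on a closed analytic set \(Z\) (a \(\Dmod\)-module operation, since local cohomology along a closed set preserves the \(\Dmod\)-structure), and then extract its \((r+1)\)-st cohomology sheaf.

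First I would apply the transitivity of local cohomology, \(\derR\Gamma_{A \cap B} \simeq \derR\Gamma_A \circ \derR\Gamma_B\) for closed subsets, to write
\[
\derR\Gamma_{\Gamma_f}(\Omod_{U\times S}) \simeq \derR\Gamma_{\Gamma_F}\big(\derR\Gamma_{X\times S}(\Omod_{U\times S})\big).
\]
For the inner term, \(q\) is smooth, hence flat, so local cohomology commutes with \(q^{*}\); as \(X\) is a complete intersection of codimension \(r\), the sheaves \(\shH^{\bullet}_X(\Omod_U)\) are concentrated in degree \(r\), whence
\[
\derR\Gamma_{X\times S}(\Omod_{U\times S}) \simeq q^{*}\shH^{r}_X(\Omod_U)\decal{-r},
\]
a single \(\Dmod_{U\times S}\)-module placed in degree \(r\); write \(M := \shH^{r}_X(\Omod_U)\). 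For the outer term, \(\Gamma_F\) is a smooth hypersurface and \(\iota\) is a section of \(q\), so local cohomology along its image recovers the \(\Dmod\)-module direct image of the graph embedding: \(t - F\) acts injectively on \(q^{*}M\), so \(\shH^{0}_{\Gamma_F}(q^{*}M) = 0\), while
\[
\shH^{1}_{\Gamma_F}(q^{*}M) \;=\; q^{*}M[(t-F)^{-1}]\big/\, q^{*}M \;\cong\; \bigoplus_{j \geq 0} M\,\partial_t^{\,j} \;=\; \iota_{+}M,
\]
the identification \((t-F)^{-j-1} \leftrightarrow \partial_t^{\,j}\) carrying the natural action on local cohomology to the twisted \(\Dmod_{U\times S}\)-action defining \(\iota_{+}\) (this is the model case already used for \(M = \Omod_U\), where it gives \(\shH^1_{\Gamma_F}(\Omod_{U\times S}) = \iota_+\Omod_U\)). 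Combining the two computations yields \(\derR\Gamma_{\Gamma_f}(\Omod_{U\times S}) \simeq \iota_{+}M\decal{-r-1}\), and taking \(\shH^{r+1}\) gives \(\shH^{r+1}_{\Gamma_f}(\Omod_{U\times S}) \simeq \iota_{+}\shH^{r}_X(\Omod_U)\), as claimed.

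The main obstacle is to carry out the third step at the level of \(\Dmod\)-modules rather than merely \(\Omod\)-modules: one must check that the isomorphism \(\shH^1_{\Gamma_F}(q^{*}M)\cong M[\partial_t]\) intertwines the residue action of \(\Dmod_{U\times S}\) on local cohomology with the twisted action defining the graph direct image \(\iota_+\), and likewise that the transitivity and flat base-change isomorphisms of the first two steps respect the \(\Dmod_{U\times S}\)-structure. It is precisely the complete-intersection hypothesis on \(X\) that collapses \(\derR\Gamma_{X\times S}(\Omod_{U\times S})\) to a single degree and produces the clean shift by \(r+1\); without it one would only obtain a spectral sequence relating \(\iota_{+}\shH^{\bullet}_X(\Omod_U)\) to \(\shH^{\bullet}_{\Gamma_f}(\Omod_{U\times S})\).
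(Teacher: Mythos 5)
Your argument is correct, and it takes a genuinely different route from the paper's, though with the same toolbox. The paper stays entirely functorial: it writes \(\Omod_U = \derL\iota^*\Omod_{U\times S}\), commutes \(\derL\iota^*\) with local cohomology to get \(\derR\Gamma_{[X]}\derL\iota^*\Omod_{U\times S} \simeq \derL\iota^*\derR\Gamma_{[\Gamma_f]}\Omod_{U\times S}\) (via \cite[4.1]{MaTo04}), applies the Kashiwara-type identity \(\iota_+\derL\iota^*\Mmod \simeq \derR\Gamma_{[\Gamma_F]}\Mmod\decal{1}\) of \cite[5.1]{MaTo04}, and finishes with transitivity in the form \(\derR\Gamma_{[\Gamma_F]}\derR\Gamma_{[\Gamma_f]} \simeq \derR\Gamma_{[\Gamma_f]}\), using \(\Gamma_f \subseteq \Gamma_F\). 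You instead use the other natural decomposition \(\Gamma_f = \Gamma_F \cap (X\times S)\), apply transitivity \emph{first}, identify the inner term by flat pullback along the projection \(q\), and compute the outer term by the explicit hypersurface model \(\shH^1_{[\Gamma_F]}(q^*M) \simeq q^*M[(t-F)^{-1}]/q^*M \simeq M \otimes_\CC \CC[\partial_t]\). In short: the paper pulls back to \(U\) along \(\iota\) and re-pushes, while you push the computation through \(q\). What your version buys is that it proves the lemma and the explicit description \eqref{eq:graph-embedding2} with the twisted action \eqref{eq:graph-action} in one stroke (the paper states these separately, as an alternative identification), and it isolates exactly where the complete-intersection hypothesis enters — the collapse of \(\derR\Gamma_{[X\times S]}\Omod_{U\times S}\) to a single degree — together with the correct remark that otherwise one only gets a spectral sequence; what the paper's version buys is brevity, since every step is a cited functorial identity and no module-level verification is needed.

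Two small points to make your middle step airtight. First, the injectivity of \(t - F\) on \(q^*M\), equivalently \(\shH^0_{[\Gamma_F]}(q^*M) = 0\), is asserted rather than proved; the quickest justification is the very identity \(\iota_+\derL\iota^* \simeq \derR\Gamma_{[\Gamma_F]}(\cdot)\decal{1}\) applied to \(q^*M\): since \(q \circ \iota = \id_U\) gives \(\derL\iota^* q^* M \simeq M\) concentrated in degree \(0\), the complex \(\derR\Gamma_{[\Gamma_F]}(q^*M)\decal{1} \simeq \iota_+ M\) is concentrated in degree \(0\) (alternatively, a Taylor-coefficient computation in \(t\) plus the Krull intersection theorem works). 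Second, the dictionary is \(\partial_t^j\,[1/(t-F)] = (-1)^j j!\,[1/(t-F)^{j+1}]\), so \((t-F)^{-j-1}\) corresponds to \(\partial_t^j\) only up to the constants \((-1)^j j!\) — harmless, but needed when checking that the residue action matches \eqref{eq:graph-action}; and throughout one should use algebraic local cohomology \(\derR\Gamma_{[\,\cdot\,]}\), as the paper does, so that \(\shH^1\) along the hypersurface is indeed the localization quotient you wrote rather than the larger topological-support sheaf.
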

\begin{proof}
Working on the derived category \(D^b(\Dmod_U)\) of bounded complexes of left \(\Dmod_U\)-modules, we identify \(\shH^r_{X}(\Omod_U)\) with \(\derR \Gamma_{[X]} \Omod_U \decal{r}\). Writing \(\Omod_{U} = \iota^* \Omod_{U \times S}\) and using the fact that the direct image and the local cohomology functors commute, see \cite[4.1]{MaTo04},
\[
\iota_{+} \derR \Gamma_{[X]} \Omod_U \decal{r} = \iota_{+} \derR \Gamma_{[X]} \derL \iota^* \Omod_{U \times S} \decal{r} \simeq \iota_{+} (\derL \iota^* \derR \Gamma_{\Gamma_f} \Omod_{U \times S} \decal{r}),
\]
since \(\iota(X) = \Gamma_f\). For any object \(\Mmod \) in \(D^b(\Dmod_{U \times S})\) and any submanifold \(i : Y \hookrightarrow U \times S\) of codimension \(q\), one has \(i_{+} (\derL i^* \Mmod) \simeq \derR \Gamma_{[Y]} \Mmod \decal{q}\), see \cite[5.1]{MaTo04}. Hence,
\[
\iota_{+} \derR \Gamma_{[X]} \Omod_U \decal{r} \simeq \derR \Gamma_{\Gamma_F} (\derR \Gamma_{\Gamma_f} \Omod_{U \times S} \decal{r})\decal{r + 1} \simeq \derR \Gamma_{\Gamma_F\, \cap\, \Gamma_f} \Omod_{U \times S} \decal{r + 1}.
\]
The result follows by taking cohomology at degree zero and noticing that \(\Gamma_f \subset \Gamma_F\).
\end{proof}

Alternatively, we can identify the direct image by the graph embedding of \(\shH^{r}_{X}(\Omod_U)\) with the \(\Dmod_{U \times S}\)-module 
\begin{equation} \label{eq:graph-embedding2}
    \shB_f = \shH^r_{X} (\Omod_U) \otimes_{\CC} \CC[\partial_{t}],
\end{equation}
with the \(\Dmod_{U \times S}\)-action given as follows. Let  \(m \in \shH^r_{X}(\Omod_U), g \in \Omod_{U}\), and \(\xi \in \Der(\Omod_{U})\), then
\begin{equation} \label{eq:graph-action}
\begin{split}
    \partial_{t}  (m \otimes \partial_{t}^\nu) = m \otimes \partial_t^{\nu+1}, &\qquad \xi (m \otimes \partial_{t}^\nu) = (\xi m) \otimes \partial_t^\nu - (\xi F) m \otimes \partial_t^{\nu+1}, \\
    g (m \otimes \partial_t^\nu) = g m \otimes \partial_t^\nu, &\qquad t (m \otimes \partial_t^\nu) = F m \otimes \partial_t^\nu - \nu m \otimes \partial_t^{\nu-1}.
\end{split}
\end{equation}
One then has \(\GM \simeq \derR p_*\DR_{U \times S/S}\shB_f\), where \(\DR_{U \times S/S}\shB_f = \Omega^{d + \bullet}_{U \times S/S} \otimes_{\Omod_{U\times S}} \shB_f\) is the relative de Rham complex for the projection \(p\).

\jump

This description of the Gauss-Manin system can be further simplified, see \cite[\S 3]{SS85}. Define the complex of \(F^{-1}\Dmod_S\)-modules
\begin{equation} \label{eq:complex-K}
    \shK^\bullet = \shH^r_{X}(\Omega_U^{d+\bullet}) \otimes_{\CC} \CC[\partial_t]
\end{equation}
placed in degrees \(-d, \dots, 0\), with differential
\begin{equation} \label{eq:diff-complex-K}
    \underline{\dd} (\omega \otimes \partial_t^k)  = \dd\omega \otimes \partial_t^k - (\dd F \wedge \omega) \otimes \partial_t^{k+1},
\end{equation}
and \(F^{-1}\Dmod_S\)-action as in \eqref{eq:graph-action}. The complex \((\shH^r_{X}(\Omega_U^{d+\bullet}), \dd) \) can be identified with the de Rham complex of the \(\Dmod_U\)-module \(\shH^r_{X}(\Omod_U)\). Then
\[
    \iota^{-1} \DR_{U \times S/S}\shB_f \simeq \shH^r_{X}(\Omega^{d + \bullet}_U) \otimes \CC[\partial_t],
\]
and since \(\DR_{U\times S/S}\shB_f\) is supported on the graph of \(f\),
\begin{equation} \label{eq:gm-derf}
    \GM \simeq \derR F_* (\iota^{-1} \DR_{U \times S/S}\shB_f) \simeq \derR f_*\shK^\bullet.
\end{equation}

\subsection{Filtrations on local cohomology} \label{sec:filtrations}

The only non-vanishing local cohomology sheaf \(\shH^r_{X}(\Omod_U)\) of a local complete intersection \(X\) comes equipped with two natural filtrations. The order filtration on \(\shH^r_{X}(\Omod_U)\) is the increasing filtration given by
\[
    O_k \shH^r_{X} (\Omod_U) := \{ u \in \shH^r_{X}(\Omod_U)\ |\ \shI_X^{k+1} u = 0 \}, \quad k \geq 0,
\]
where \(\shI_X\) is the ideal sheaf defining \(X\) on \(U\). The order filtration is compatible with the filtration by order of differential operators on \(\Dmod_U\),
\[
F_l \Dmod_U \cdot O_k \shH^r_{X}(\Omod_X) \subseteq O_{k+l} \shH^r_{X}(\Omod_U), \quad \textnormal{for}\quad k, l \geq 0.
\]
Since \(r = \codim_U(X)\), the sheaves \(O_k \shH_{X}^r(\Omod_X)\) are coherent. In general, this is no longer true for arbitrary \(r\) in the non-local complete intersection case.

\jump

Recall the definition of local cohomology in terms of the Ext functor, see \cite[2.8]{Har67},
\[
\shH^r_{X} (\Omod_U) = \lim_{\longrightarrow} \shExt^r_{\Omod_U}(\Omod_U / \shI_X^k, \Omod_X).
\]
For \(r = \codim_U(X)\), one has that the Ext filtration is
\[
E_k \shH_{X}^r(\Omod_U) := \im[\shExt^r_{\Omod_U}(\Omod_U/\shI_X^{k+1}, \Omod_X) \longrightarrow \shH^r_{X}(\Omod_U)] = \shExt^r_{\Omod_U}(\Omod_U/\shI_X^{k+1}, \Omod_U),
\]
see \cite[\S 7]{MP22}. The sheaves \(E_k \shH^r_{X}(\Omod_U)\) are also coherent, but it is not clear a priori that they are compatible with the filtration on \(\Dmod_U\). However, we have that for \(r = \codim_U(X)\), 
\[O_k \shH^r_{X}(\Omod_U) = E_k \shH^r_{X}(\Omod_U),\quad \textnormal{for all}\quad k \geq 0,\]
see \cite[7.14]{MP22}.

\jump

After the above discussion, we define the following filtration on the complex \(\shK^\bullet \) 
\begin{equation} \label{eq:F}
    F_{k} \shK^p = \bigoplus_{i + j \leq k + d + p} O_i \shH^r_{X}(\Omod_U) \otimes_{\Omod_U} \Omega^{d+p}_U \otimes_{\CC} \partial_t^j, \qquad \textnormal{for} \qquad p = -d, \dots, 0.
\end{equation}
The filtration \(F_\bullet\) is then an increasing filtration by coherent \(\Omod_U\)-submodules such that \(\partial_t\) maps \(F_k\) to \(F_{k+1}\). The filtration \eqref{eq:F} induces a filtration on the Gauss-Manin system by
\[
    F_{k} \GM = \im\,[ \derR f_*(F_{k} \shK^\bullet) \longrightarrow \GM].
\]

The local cohomology sheaf \(\shH_{X}^r(\Omod_U)\) is a regular holonomic \(\Dmod_U\)-module underlying a mixed Hodge module which, in particular, implies that \(\shH_{X}^r(\Omod_U)\) carries a canonical Hodge filtration \(F^H_\bullet\). A Hodge filtration can then be defined on the complex \(\shK^\bullet\) by
\[
    F^H_{k} \shK^p = \bigoplus_{i + j \leq k + d + p} F^H_i \shH^r_{X}(\Omod_U) \otimes_{\Omod_U} \Omega^{d+p}_U \otimes_{\CC} \partial_t^j, \qquad \textnormal{for} \qquad p = -d, \dots, 0,
\]
and similarly \(F_k^H \shG_f^\bullet\) on \(\shG_f^\bullet\). In general, we only have the inclusions
\begin{equation} \label{eq:inclusion}
    F^H_k \shH^r_{X}(\Omod_U) \subseteq O_k \shH^r_{X}(\Omod_U) = E_k \shH^r_{X}(\Omod_U), \quad k \geq 0,
\end{equation}
see \cite[7.4]{MP22}, and the equality holds for all \(k \geq 0\) if and only if \(X\) is smooth, \cite[9.6]{MP22}. Therefore, the Hodge filtration \(F^H_\bullet\) and the filtration \(F_\bullet\) from \eqref{eq:F} on \(\shK^\bullet\) are generally not equal. When \(X\) is smooth both filtrations are locally given by
\begin{equation} \label{eq:smooth-case}
    F_k \shH^r_X (\Omod_U) = F_k^H \shH^r_X (\Omod_U) \simeq \bigoplus_{\alpha_1 + \cdots + \alpha_r \leq k} \Omod_X \otimes \partial_{x_1}^{\alpha_1} \cdots \partial_{x_r}^{\alpha_r},
\end{equation}
where \(x_1, \dots, x_r\) are local coordinates on \(U\) defining \(X\).

\subsection{Regular differential forms} \label{sec:regular-forms}

We will review next some basic facts about the complex \(\omega^\bullet_X\) of regular differential forms of a pure dimensional analytic space introduced by Barlet in \cite{Bar78}, but see also the work of Kersken \cite{Ker84}. For simplicity, we will restrict the exposition to the case of a pure dimensional local complete intersection.

\jump

Let \(X\) be a local complete intersection of pure dimension \(n\) contained in a complex manifold \(U\) of dimension \(n + r\). Recall that the fundamental class \(c_{X}\) of \(X\) is an element of \(H_X^r(U, \Omega_U^r)\) that is defined as follows. If \(F_1, \dots, F_r\) define \(X\) in \(U\), the fundamental class \(c_{X}\) corresponds to the class of the element
\[
    \frac{\dd F_1}{F_1} \wedge \cdots \wedge \frac{\dd F_r}{F_r},
\]
under the isomorphism \(H^r_X(U, \Omega^r_U) \simeq H^0(U, \shH^r_{X}(\Omega^r_U))\). In particular, \(c_{X}\) is a global section of the sheaf \(\shExt^r_{\Omod_U}(\Omod_X, \Omega^r_U)\). The morphism of complexes \( \Omega^\bullet_U \longrightarrow \shExt^r_{\Omod_U}(\Omod_X, \Omega_U^{r + \bullet}) \) defined by exterior multiplication by \(c_{X}\) induces the fundamental class morphism
\[
    \bar{c}_{X} : \Omega^\bullet_{X} \longrightarrow \shExt^r_{\Omod_U}(\Omod_X, \Omega_U^{r + \bullet}),
\]
which is intrinsic and commutes with the exterior derivatives, see \cite[1, 2]{Bar78}.

\jump

Let \(\shS_X\) be the singular locus of \(X\) and denote by \(j : X \setminus \shS_X \longrightarrow X\) the open embedding. The functoriality of the exact sequence
\[
0 \longrightarrow \shH_{\shS_X}^0(\shM) \longrightarrow \shM \longrightarrow j_* j^{-1} \shM \xrightarrow{\, \ \partial \ \, } \shH_{\shS_X}^1(\shM) \longrightarrow 0,
\]
for any sheaf \(\shM\) of \(\Omod_U\)-modules, induces a morphism
\[
    \shH^1_{\shS_X}(\bar{c}_{X}) : \shH^1_{\shS_X}(\Omega^\bullet_X) \xrightarrow{\qquad} \shH^1_{\shS_X}(\shExt^r_{\Omod_U}(\Omod_X, \Omega^{r+\bullet}_U)).
\]
Then the sheaves \(\omega_X^\bullet\) are defined as the kernels of the homomorphism
\[
\partial \circ \shH^1_{\shS_X}(\bar{c}_{X}) : j_* j^{-1} \Omega^\bullet_X \xrightarrow{\qquad} \shH^1_{\shS_X}(\shExt^r_{\Omod_U}(\Omod_X, \Omega^{r+\bullet}_U)).
\]
Since \(j_*j^{-1} \Omega^\bullet_X \) has no torsion, the sheaves \(\omega_X^p \) are torsion-free \(\Omod_X\)-modules which coincide with \(\Omega^p_X\) on the smooth points of \(X\). In particular, \(\omega^p_X = 0\) for \(p < 0\) and \(p > n\). Moreover, the exterior derivative induces a complex \((\omega_X^\bullet, \dd)\). 

\jump

Since the complex \(\shM^\bullet = \shExt_{\Omod_U}^{r}(\Omod_X, \Omega^{r + \bullet}_U)\) has no \(\Omod_X\)-torsion, one can form the commutative diagram
\begin{equation} \label{eq:diagram-dualizing}
    \begin{tikzcd}
        0 \arrow{r} & \omega^\bullet_X \arrow[dashed,d] \arrow{r} & j_*j^{-1} \Omega^\bullet_X \arrow{d} \arrow{dr} \arrow{r}{\partial} & \shH^1_{\shS_X}(\Omega^\bullet_X) \arrow{d}{\shH_{\shS_X}^1(\bar{c}_{X})} \arrow{r} & 0 \\
        0 \arrow{r} & \shM^\bullet \arrow{r} & j_* j^{-1} \shM^\bullet \arrow{r} & \shH^1_{\shS_X}(\shM^\bullet) \arrow{r} & 0.
    \end{tikzcd}
\end{equation}
From this diagram, one finds that there is a unique morphism \(\omega^\bullet_X \longrightarrow \shExt^r_{\Omod_U}(\Omod_X, \Omega^{r+\bullet}_U)\), which we keep denoting by \(\bar{c}_{X}\). This morphism sits in the exact sequence
\begin{equation} \label{eq:exact-sequence}
0 \longrightarrow \omega^\bullet_X \xrightarrow{\ \bar{c}_{X}\ } \shExt^r_{\Omod_U}(\Omod_X, \Omega_U^{r + \bullet}) \xrightarrow{\ \ \alpha \ \ } [ \shExt^r_{\Omod_U}(\Omod_X, \Omega^{r + 1 + \bullet}_U) ]^r,
\end{equation}
where \(\alpha(\eta) = (\eta \wedge \dd F_1, \dots, \eta \wedge \dd F_r)\), see \cite[4]{Bar78}. In particular, \(\omega^n_X\) is isomorphic to \(\shExt^r_{\Omod_U}(\Omod_X, \Omega_U^{r + n})\), the Grothendieck dualizing sheaf. Since \(X\) is a local complete intersection, the sheaf \(\omega^n_X\) is a locally free of rank one generated by the Gelfand-Leray form of \(X\), that is, locally \(\omega^n_X \simeq \Omod_X (\dd z / \dd F_1 \wedge \cdots \dd F_r) \).

\jump

As shown in \cite[3]{Bar78}, the sheaves of regular differential forms satisfy the following duality property
\begin{equation} \label{eq:duality}
\omega_X^p \simeq \shHom_{\Omod_X}(\Omega_X^{n-p}, \omega_X^n).
\end{equation}
Since \(\omega_X^n\) is coherent, the duality implies that \(\omega^p_X, p = 0, \dots, n - 1,\) are also coherent. A posteriori, one can define the regular differential forms \(\omega_X^p, p \in [0, n),\) as those meromorphic differential forms \(\omega\) on \(X\) of degree \(p\) such that \(\omega \wedge \eta \in \omega^n_X\), for any \(\eta \in \Omega_X^{n-p}\).

\subsection{The relative case} \label{sec:relative-forms}

Assume now that \(f : X \longrightarrow S\) is a morphism of analytic spaces from a locally complete intersection \(X\) to an analytic space \(S\). Then the definitions from section \ref{sec:regular-forms} extend straightforwardly to the relative case. For simplicity, we will assume that \(S\) is pure dimensional of dimension one.

\jump

Assume that \(X\) has pure dimension \(n + 1\) and that it is embedded inside a complex manifold \(U\) of pure dimension \(n + r + 1\). Taking \(F : U \longrightarrow S\) such that \(f = \restr{F}{X}\), one defines the fundamental class \(c_f\) of the morphism \( f \) as the class of 
\[
    \frac{\dd F_1}{F_1} \wedge \cdots \wedge \frac{\dd F_r}{F_r} \wedge \dd F
\]
in \(H^0(U, \shH_X^r(\Omega_U^{r+1}))\), where \(F_1, \dots, F_r\) define \(X\) in \(U\). Moreover, \(c_{f}\) induces a fundamental class morphism
\[
    \bar{c}_{f} : \Omega_f^\bullet \longrightarrow \shExt_{\Omod_U}^{r}(\Omod_X, \Omega_U^{r + 1 + \bullet}),
\]
where \(\Omega_f^\bullet\) is the complex of relative K\"ahler differentials of the morphism \(f : X \longrightarrow S\). It is easy to check that neither the fundamental class nor the induced fundamental morphism depends on the choice of \(F\) defining \(f\). 

\jump

Let \(\shC_f\) be the critical locus of \(f\), and denote by \(j : X \setminus \shC_f \longrightarrow X\) the open embedding. As in \cref{sec:regular-forms}, define the sheaves of relative regular forms \(\omega_f^\bullet\) as the kernels of the homomorphisms
\[
\partial \circ \shH^1_{\shC_f}(\bar{c}_{f}) : j_* j^{-1} \Omega_f^\bullet \longrightarrow \shH_{\shC_f}^1(\shExt_{\Omod_U}^1(\Omod_X, \Omega^{r+1+\bullet}_U)).
\]
These give rise to torsion-free \(\Omod_X\)-modules that form a complex \((\omega_f^\bullet, \dd)\) of \(f^{-1}\Omod_S\)-modules coinciding with \(\Omega^\bullet_f\) on the regular points of \(f\).  Notice, we have the vanishing \(\omega_f^p = 0\) for \(p < 0\) and \(p > n\). The same proof as in \cite[4]{Bar78} shows that the relative regular forms sit in the exact sequence
\begin{equation} \label{eq:relative-exact-sequence}
0 \longrightarrow \omega^\bullet_f \xrightarrow{\ \bar{c}_{f}\ } \shExt^r_{\Omod_U}(\Omod_X, \Omega_U^{r + 1 + \bullet}) \xrightarrow{\ \ \beta \ \ } [ \shExt^r_{\Omod_U}(\Omod_X, \Omega^{r + 2 + \bullet}_U) ]^{r+1},
\end{equation}
where \(\beta(\eta) = (\eta \wedge \dd F_1, \dots, \eta \wedge \dd F_r, \eta \wedge \dd F)\). From this, one can deduce the inclusions \(\omega_{f}^p \subseteq \omega_X^{p+1}\) and that \(\omega^n_f \) is isomorphic to the Grothendieck dualizing sheaf \(\omega^{n+1}_X\). The duality satisfied in this case is 
\begin{equation} \label{eq:duality-relative}
\omega_f^p \simeq \shHom_{\Omod_X}(\Omega^{n-p}_f, \omega_f^n) \simeq \shHom_{\Omod_X}(f^* \Omega^{1}_S, \omega_{X}^{n+1}),
\end{equation}
as one can verify by using the same arguments in the proof of \cite[3]{Bar78}. We can deduce from this that the sheaves \(\omega^p_f, p = 0, \dots, n - 1\), are coherent. Since \(X\) is a local complete intersection, \(\omega_X^{n+1}\) is locally free, and it follows from \eqref{eq:duality-relative} that \(\omega^n_{f}\) coincides with the relative dualizing sheaf \(\omega^{n+1}_X \otimes (f^* \omega^1_S)^*\), see for instance \cite{Del83}. Particularly, the sheaf \(\omega_{f}^n\) is compatible with base change. However, the complex \(\omega^\bullet_f\) is not, see \cref{rmk:base-change}.

\subsection{Normal varieties} \label{sec:normal}

We keep using the notations from the previous sections, but assume now that \(X\) is a normal analytic space. Moreover, assume that the critical locus of \(f : X \longrightarrow S\) is contained in the singular locus of \(X\).

\jump

For a coherent sheaf \(\shF\) on \(X\), we denote its \(\Omod_X\)-dual by \(\shF^* := \shHom_{\Omod_X}(\shF, \Omod_X)\). The double dual, or reflexive hull, is denoted by \(\shF^{**} := (\shF^*)^*\). There is always a canonical map \(\shF \longrightarrow \shF^{**}\), and the sheaf \(\shF\) is called reflexive if this map is an isomorphism. This map gives rise to the four-term exact sequence
\[
0 \longrightarrow \textnormal{tor}\,\shF \longrightarrow \shF \longrightarrow \shF^{**} \longrightarrow \textnormal{cotor}\,\shF \longrightarrow 0,
\]
where \(\textnormal{tor}\,\shF\) is the torsion subsheaf of \(\shF\), and where \(\textnormal{cotor}\,\shF\), the cotorsion sheaf of \(\shF\), is the cokernel of the map \(\shF \longrightarrow \shF^{**}\).

\jump

On a normal analytic space \(X\), the dualizing sheaf \(\omega_X^{n+1}\) is reflexive \cite[5.3]{GPR94}, and hence so is \(\omega_f^n\). From this fact and the dualities in \eqref{eq:duality} and \eqref{eq:duality-relative}, one deduces the reflexivity of \(\omega_X^p\), for \(p = 0, \dots, n\), and of \(\omega_f^p\), for \(p = 0, \dots, n - 1\), see for instance \cite[\href{https://stacks.math.columbia.edu/tag/0AV3}{0AV3}]{stacks-project}. By \cite[7]{Ser66}, reflexivity on a normal analytic space implies that 
\begin{equation} \label{eq:reflexive-differential}
    \omega_X^p \simeq j_* j^{-1} \Omega_X^p \simeq \Omega^{[p]}_X, \qquad \omega_f^p \simeq j_* j^{-1} \Omega_f^p \simeq \Omega^{[p]}_f,
\end{equation}
where \(\Omega^{[p]}_X := (\Omega^p_X)^{**}, \Omega^{[p]}_f := (\Omega_f^p)^{**}\) are the (relative) reflexive differential forms on \(X\). This makes the (relative) regular differential easier to handle in the normal case, as proving assertions for \(\omega^\bullet_X\) or \(\omega^\bullet_f\) is equivalent to proving assertions on the regular locus. Another consequence is that the sheaves of regular differential forms sit in the exact sequence
\begin{equation} \label{eq:four-term-exact}
0 \longrightarrow \textnormal{tor}\, \Omega^\bullet_X \longrightarrow \Omega^\bullet_X \longrightarrow \omega_X^\bullet \longrightarrow \textnormal{cotor}\, \Omega^\bullet_X \longrightarrow 0,
\end{equation}
and similarly for \(\omega^\bullet_f\). For \(p \leq n\), one has \(\textnormal{tor}\, \Omega^p_X \neq 0\) if and only if \(p = n + 1\), and \(\textnormal{cotor}\, \Omega^p_X \neq 0 \) for \(p = n, n+1\), see for instance \cite[3.1]{MV19}. The relative case will be treated in \cref{lemma:torsion}. Notice that the vanishing \(\omega_X^p = 0\), for \(p > n + 1\), implies that \(\Omega^p_X\) is torsion for \(p > n + 1\). Likewise, \(\Omega_f^p \) is torsion for \(p > n\). 

\jump

Passing to the stalks at \(x \in X\), the cotorsion of \(\Omega^{n+1}_{X,x}\) is isomorphic to the singular locus \(\Omod_{\shS_X, x}\) of \((X, x)\). Its length coincides with \(\tau'(X, x) = \dim_{\CC} \textnormal{tor}\, \Omega^{n+1}_{X,x}\), see \cite[1]{Gre80}. As shown in \cite[4.6]{Ker84}, the length of \(\textnormal{cotor}\, \Omega^{n}_{X,x}\) is equal to \(\tau(X, x)\) the Tjurina number of \((Z, x)\). Notice that in the hypersurface case, \(\tau(X, x) = \tau'(X, x)\).

\jump

Define the torsion-free differentials 
\[
\check{\Omega}^p_X := \Omega^p_X / \textnormal{tor}\, \Omega^p_X, \qquad \check{\Omega}^p_f := \Omega^p_f / \textnormal{tor}\, \Omega_f^p,
\]
so that the exact sequence \eqref{eq:four-term-exact} reduces to the short exact sequence,
\begin{equation} \label{eq:short-exact-seq}
0 \longrightarrow \check{\Omega}^\bullet_X \longrightarrow \omega_X^\bullet \longrightarrow \textnormal{cotor}\, \Omega^\bullet_X \longrightarrow 0,
\end{equation}
and similarly in the relative case. Torsion-free differential forms are not, in general, reflexive, but they are functorial, see \cite[1.1]{Fer70}.

\jump

In the absolute case, all this discussion can be summarized by the maps
\[
\textnormal{tor}\, \Omega^p_X \xhookrightarrow{\quad} \Omega^p_X \xrightarrowdbl{\quad} \check{\Omega}^p_X \xhookrightarrow{\quad} \omega^p_X.
\]
The relation between all these notions of differential forms on a normal space can be described as follows. A torsion differential form is a K\"ahler form that vanishes outside the singular locus of \(X\). Giving a regular differential form is the same as giving a K\"ahler differential form on the smooth locus of \(X\). A regular, i.e., reflexive, differential is a torsion-free differential if it extends to the ambient space of an embedding of \(X\) into a smooth space, see \cite{Fer70}.

\subsection{Regular forms on ICIS} \label{sec:regular-icis}

We are interested in the case where \(f : X \longrightarrow S\) is a good representative of a smoothing \(f : (X, x) \longrightarrow (\CC, 0)\) of an ICIS \((Z, x)\) of dimension \(n\). Recall that, in this case, both the singular locus \(\shS_X\) of \(X\) and the critical locus \(\shC_f\) of \(f\) are concentrated on \(x \in X\). For the rest of this work, we will need to assume that \((Z, x)\) is positive-dimensional, i.e., \(n > 0\). Since \((X, x)\) is a complete intersection, it is Cohen-Macaulay, so by Serre's criterion, this will be equivalent to \((X, x)\) being a normal singularity.

\jump

In this setting, we will study the cohomology of the complexes of (relative) regular differential forms, as they will play an important role in the following sections. Since the singularities are isolated, it is enough to study the cohomology of the stalk at \(x \in X\). In the absolute case, Kersken \cite[4.6]{Ker84} showed that, if \((X, x)\) is an ICIS of dimension \(n + 1, n > 0\), then 
\begin{equation} \label{eq:vanishing}
H^p(\omega_{X,x}^\bullet) = 0, \quad \textnormal{for} \quad p = 1, \dots, n - 1,
\end{equation} 
see also \cite[\S 4]{Gre75}. Moreover, 
\begin{equation} \label{eq:vanishing2}
    \dim_{\CC} H^0(\omega_{X,x}^\bullet) = 1, \quad \dim_{\CC} H^{n+1}(\omega_{X,x}^\bullet) - \dim_{\CC} H^{n}(\omega_{X,x}^\bullet) = \mu(X, x) - \tau(X, x).
\end{equation}
It is known that \(\mu(X, x) \geq \tau(X, x)\), see  \cite{LS85}, hence \(\dim_{\CC} H^{n+1}(\omega^\bullet_{X,x}) \geq \dim_{\CC} H^n(\omega^\bullet_{X,x})\).

\jump

We will require a similar vanishing result in the relative case. Consider the relative version of the short exact sequence of complexes from \eqref{eq:short-exact-seq},
\begin{equation} \label{eq:short-exact-regular}
0 \longrightarrow \check{\Omega}^\bullet_{f,x} \longrightarrow \omega_{f,x}^\bullet \longrightarrow \textnormal{cotor}\, \Omega^\bullet_{f,x} \longrightarrow 0.
\end{equation}
The long exact sequence in cohomology will yield the required vanishings as long as one understands when the torsion and cotorsion modules of \(\Omega^\bullet_f\) vanish. For that, one can employ the generalized Koszul complexes from \cite{MV19}.

\begin{lemma} \label{lemma:torsion}
    Let \(f : (X, x) \longrightarrow (S, s)\) be a flat morphism of germs of analytic spaces with \((S, s)\) smooth of dimension one. Let \(x \in X\) be a critical point of \(f\) such that the fiber \((X_s, x)\) is a normal complete intersection singularity of dimension \(n\). Then
    \[
        \begin{split}
        \textnormal{tor}\, \Omega^p_{f,x} = 0,& \quad \textnormal{for} \quad p < k, \\
        \textnormal{cotor}\, \Omega^p_{f,x} = 0,& \quad \textnormal{for} \quad p < k - 1,
        \end{split}
    \]
    where \(k\) is the codimension of the critical locus of \(f\) at \(x\).
\end{lemma}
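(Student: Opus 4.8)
The plan is to reduce the statement to a depth estimate for the modules \(\Omega^p_{f,x}\) along the critical locus \(\shC_f\), and then to extract that estimate from a generalized Koszul complex in the sense of \cite{MV19}. Since \(X\) is a complete intersection it is Cohen--Macaulay, and \(\Omega^p_f\) is locally free of rank \(\binom{n}{p}\) on the complement \(V = X \setminus \shC_f\), where \(f\) is a submersion. Hence its torsion subsheaf consists of the sections supported on \(\shC_f\), so \(\textnormal{tor}\,\Omega^p_{f,x} = H^0_{\shC_f}(\Omega^p_f)_x\); and, when \(k \geq 2\) (the only case in which the cotorsion assertion is non-vacuous), the \(S_2\)-property of the Cohen--Macaulay space \(X\) identifies the reflexive hull with \(j_* j^{-1}\Omega^p_f\) for \(j : V \hookrightarrow X\), whence \(\textnormal{cotor}\,\Omega^p_{f,x} = H^1_{\shC_f}(\Omega^p_f)_x\) once the torsion vanishes. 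It therefore suffices to prove
\[
    \depth_{\shC_f}\,\Omega^p_{f,x} \geq k - p,
\]
since the vanishing of \(H^0_{\shC_f}\) (for \(p < k\)) gives the torsion statement and the additional vanishing of \(H^1_{\shC_f}\) (for \(p < k-1\)) gives the cotorsion statement.

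To produce this bound I would present the relative differentials as a cokernel. Writing \(X = V(F_1, \dots, F_r) \subseteq U\) and \(f = \restr{F}{X}\), the relative conormal sequence gives
\[
    \Omega^1_{f} \simeq \Coker\big(\phi : \Omod_X^{\,r+1} \xrightarrow{\ (\dd F, \dd F_1, \dots, \dd F_r)\ } \Omega^1_U \otimes_{\Omod_U} \Omod_X\big),
\]
so that \(\Omega^p_f \simeq \bigwedge^p \Coker\phi\). At a point of \(V\) the covectors \(\dd F, \dd F_1, \dots, \dd F_r\) are linearly independent, so \(\phi\) is generically injective of rank \(r+1\); its degeneracy locus, where the rank drops, is cut out by the ideal \(I_{r+1}(\phi)\) of maximal minors and coincides with the critical locus \(\shC_f\), of codimension \(k\). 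The normality of the fiber \((X_s, x)\) places us in the reflexive setting of \cref{sec:normal} and guarantees that this is the relevant description.

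The engine is then the acyclicity of the generalized Koszul complex \(\shK^p_\bullet\) attached to \(\phi\), whose terms are the free modules \(S_i \Omod_X^{\,r+1} \otimes \bigwedge^{p-i}(\Omega^1_U \otimes_{\Omod_U} \Omod_X)\) and whose augmentation is \(\bigwedge^p \Coker\phi = \Omega^p_f\). This complex is exact over \(V\), where \(\phi\) is a split injection of bundles. Since \(X\) is Cohen--Macaulay and the degeneracy ideal \(I_{r+1}(\phi)\) has depth equal to \(\codim \shC_f = k\), the Buchsbaum--Eisenbud acyclicity criterion makes \(\shK^p_\bullet\) acyclic in the range governed by \(k\); carrying out for each \(p\) the computation of \cite[3.1]{MV19} in the present relative situation, the resulting control on the homology supported on \(\shC_f\) yields exactly \(\depth_{\shC_f}\Omega^p_{f,x} \geq k - p\), and hence the two vanishing statements.

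The main obstacle is the homological bookkeeping that turns the acyclicity range of \(\shK^p_\bullet\) into the sharp depth bound \(k - p\): one must track which homology module of the generalized Koszul complex computes \(H^0_{\shC_f}\) and \(H^1_{\shC_f}\) of \(\bigwedge^p \Coker\phi\), and verify that the Buchsbaum--Eisenbud depth conditions on the minor ideals of \(\phi\) hold for all intermediate ranks --- this is precisely where the Cohen--Macaulayness of \(X\) and the equality \(\codim\shC_f = k\) enter. A secondary point to check is that passing from the absolute computation of \cite[3.1]{MV19} to the relative one, that is, enlarging the family of one-forms by the single extra form \(\dd F\), does not alter the acyclicity argument and only shifts the relevant codimension from that of the singular locus of \(X\) to that of \(\shC_f\).
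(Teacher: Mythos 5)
Your route is essentially the paper's: the paper also presents \(\Omega^1_{f,x}\) via the relative conormal sequence as the cokernel of the Jacobian-type map \(\varphi : \Omod_{X,x}^{r+1} \to \Omod_{X,x}^{n+r+1}\) and then feeds this into the generalized Koszul complexes \(\mathscr{D}_p(\varphi)\) of \cite{MV19}. The one substantive difference is that what you flag as the ``main obstacle'' --- turning the acyclicity range into the depth bound \(\depth_{\shC_f}\Omega^p_{f,x} \geq k-p\), and redoing the computation of \cite[3.1]{MV19} relatively --- is not actually open: the results of \cite{MV19} are module-theoretic, so \cite[3.5]{MV19} applies verbatim to the \(\Omod_{X,x}\)-module \(\Omega^1_{f,x}\) once \(\varphi\) is injective, identifying \(H_i(\mathscr{D}_p) \simeq \textnormal{tor}\,\Omega^p_f\) for \(i = n-p+1\) and \(\simeq \textnormal{cotor}\,\Omega^p_f\) for \(i = n-p\), while \cite[2.5]{MV19} gives \(H_i(\mathscr{D}_p) = 0\) exactly for \(i > n+1-k\); this yields \(p < k\) and \(p < k-1\) directly, with no detour through local cohomology along \(\shC_f\) or a Buchsbaum--Eisenbud verification (your ``secondary point'' about the extra form \(\dd F\) dissolves for the same reason --- only the grade of the ideal of maximal minors, i.e.\ \(\codim_x \shC_f = k\), enters). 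Two small repairs to your write-up: first, you must upgrade ``generically injective'' to injective everywhere, which follows because the source of \(\varphi\) is free (hence torsion-free) and \(X\) is reduced, so the kernel, supported on a nowhere dense set, vanishes --- this is precisely how the paper obtains the length-one free resolution; second, the hypothesis doing the work in both the reflexive-hull identification and \cite[3.5]{MV19} is normality of \((X,x)\), not merely Cohen--Macaulayness/\(S_2\) --- normality of the total space follows from normality of the fiber and flatness over the smooth one-dimensional base, see \cite[1.101]{GLS07}, and should be invoked explicitly.
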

\begin{proof}
    We can always choose a representative \(f : X \longrightarrow S\) that is flat at every point of \(X\). Moreover, the assumptions imply that \((X, x)\) is also a complete intersection singularity \cite[6.10]{Loo84} that is normal \cite[1.101]{GLS07}. After embedding \(X\) in a smooth ambient space \(U\), we can consider the morphism \( F = (F_1, \dots, F_r, F_{r+1}) : U \longrightarrow \CC^{r+1}\), where \(F_1, \dots, F_r\) generate \(X\) on \(U\), and \(F_{r+1}\) is such that \( \restr{F_{r+1}}{X} = f\). Consider the relative conormal sequence for the morphism \(F : X \longrightarrow \CC^{r+1}\),
    \[
        F^* \Omega^1_{\CC^{r+1}} \longrightarrow \Omega_U^1 \longrightarrow \Omega_F^1 \longrightarrow 0.
    \]
    Since \(X\) is a complete intersection, it follows from base change that \(\Omega^1_F \otimes \Omod_X \simeq \Omega^1_f \). Thus, tensoring the previous exact sequence by \(\Omod_X\) one gets,
    \begin{equation} \label{eq:short-exact-relative}
        F^* \Omega^1_{\CC^{r+1}} \otimes_{\Omod_U} \Omod_X \longrightarrow \Omega_U^1 \otimes_{\Omod_U} \Omod_X \longrightarrow \Omega^1_f \longrightarrow 0.
    \end{equation}
    The module \(F^* \Omega^{1}_{\CC^{r+1}}\) is locally free of rank \(r + 1\), and denote by \(d\) the rank of \(\Omega^1_U\). For \(y \in X\) a regular point of the morphism \(f\), the relative differentials \(\Omega_{f}^1\) are free of rank the dimension \(n \) of the fiber \(f^{-1}(y)\). The flatness of \(f\) implies that the fiber dimension is constant, thus \(d = n + r + 1\). Therefore, localizing at \(y \in X\), \eqref{eq:short-exact-relative} is exact on the left. Since \(X\) is normal at \(x \in X\), \(X\) is reduced at \(x\), and \(X\) is generically a submanifold. Hence, \eqref{eq:short-exact-relative} is generically exact on the left, and since \(F^* \Omega^1_{\CC^{r+1}}\) is locally free, \eqref{eq:short-exact-relative} is left exact everywhere. This yields,
    \begin{equation} \label{eq:free-res}
        0 \longrightarrow \Omod_{X,x}^{r+1} \xrightarrow{\ \varphi \ } \Omod_{X, x}^{n + r + 1}\longrightarrow \Omega^1_{f,x} \longrightarrow 0,
    \end{equation}
    a finite free resolution of \(\Omega^1_{f}\) as a \(\Omod_{X,x}\)-module. Fixing local coordinates, \(\varphi = (a_{i,j})\) is just the restriction of the Jacobian matrix given by \(\dd F_j = \sum_{i} a_{i,j} \dd z_i, \) for \(j = 1, \dots, r+1\).

    \jump
    
    Consider the generalized Koszul complex \(\mathscr{D}_p = \mathscr{D}_p({\varphi})\) from \cite[\S 1.4]{MV19} associated to the injective map \(\varphi\) in \eqref{eq:free-res}. It follows from \cite[3.5]{MV19} applied to the \(\Omod_{X, x}\)-module \(\Omega^1_{f,x}\) and the normality of \((X, x)\), that
    \[
        H_i(\mathscr{D}_p) \simeq \left \{
        \begin{matrix*}[l] 
            \textnormal{tor}\, \Omega^p_f, \qquad i = n - p + 1, \\
            \textnormal{cotor}\, \Omega^p_f, \quad i = n - p.
        \end{matrix*}
        \right .
    \]
    The lemma then follows from using \cite[2.5]{MV19}. One has that \(H_i(\mathscr{D}_p) = 0\), if and only if \(i > n + 1 - k\), where \(k = \textnormal{codim}_x \shC_f\). 
\end{proof}

For a smoothing of an ICIS, the codimension of the singular locus is \(n + 1\), so the previous lemma and \eqref{eq:short-exact-regular} imply that \(\Omega^p_f\) is torsion-free for \(p < n+1\), and torsion otherwise. The cotorsion module is non-zero if and only if \(p = n\). In particular, \(\Omega^p_{f,x} \simeq \omega^p_{f,x}\), for \(p < n\). The module \(\textnormal{cotor}\, \Omega_{f,x}^{n}\) coincides with the critical locus \(\Omod_{\shC_f, x}\) and its length equals the multiplicity \(\nu(D_f, 0)\) of the discriminant of the smoothing \(f\), see \cite[p. 63]{Loo84}. 

\begin{proposition} \label{prop:cohomology-relative}
    Let \( f : (X, x) \longrightarrow (\CC, 0)\) be a smoothing of an ICIS \((Z, x)\) of dimension \(n > 0\). Then 
    \[
        H^p(\omega^\bullet_{f,x}) = 0, \quad \textnormal{for} \quad 0 < p < n. 
    \]
    Furthermore, 
    \[H^{n}(\omega^\bullet_{f,x}) \simeq \frac{\omega^{n}_{f,x}}{\dd \Omega^{n-1}_{f, x}}\]
    is a free \(\Omod_{\CC, 0}\)-module of rank \(\mu(Z, x)\).
\end{proposition}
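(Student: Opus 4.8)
The first isomorphism is essentially formal: the complex $\omega_{f,x}^\bullet$ is concentrated in degrees $0,\dots,n$, so $H^n(\omega_{f,x}^\bullet)=\coker(\dd\colon\omega_{f,x}^{n-1}\to\omega_{f,x}^n)$, and $\omega_{f,x}^{n-1}\simeq\Omega_{f,x}^{n-1}$ by the discussion following \cref{lemma:torsion}; this already gives $H^n(\omega_{f,x}^\bullet)\simeq\omega_{f,x}^n/\dd\Omega_{f,x}^{n-1}$. The remaining content splits into the middle-degree vanishing and the freeness of $H^n$. The plan is to reduce both to statements about the torsion-free K\"ahler complex via \eqref{eq:short-exact-regular} and then bring in the Gauss--Manin connection.

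For the vanishing I would feed \eqref{eq:short-exact-regular} into its long exact cohomology sequence. By \cref{lemma:torsion} and the remarks following it, $\textnormal{cotor}\,\Omega_{f,x}^\bullet$ is concentrated in degree $n$, where it equals $\Omod_{\shC_f,x}$; hence $H^p(\omega_{f,x}^\bullet)\simeq H^p(\check\Omega_{f,x}^\bullet)=H^p(\Omega_{f,x}^\bullet)$ for all $0<p<n$, and it suffices to annihilate these groups. Since $f$ is a smoothing it is a submersion on $X\setminus\{x\}$, so by the relative Poincar\'e lemma \cite[8.4]{Loo84} the complex $\omega_f^\bullet$ restricts there to a resolution of $f^{-1}\Omod_S$; combined with Greuel's finiteness theorem \cite{Gre75}, each $H^p(\omega_{f,x}^\bullet)$ with $1\le p\le n-1$ is a finitely generated $\Omod_{\CC,0}$-module whose restriction to $S\setminus\{0\}$ is the local system $R^pf_*\shCC_X$ of Milnor-fibre cohomology. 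As the Milnor fibre is a bouquet of $n$-spheres, this local system vanishes for $0<p<n$, so the module is supported at the origin and therefore finite-dimensional over $\CC$.

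On such a finite-dimensional group the Gauss--Manin connection $\nabla_{\partial_t}$ of \cite{Gre75}---defined by lifting a relatively closed form to $\omega_X^\bullet$, applying the absolute differential, and dividing the result by $\dd f$---is a $\CC$-linear endomorphism obeying the Leibniz relation $[\nabla_{\partial_t},t]=\id$. Taking traces then gives $\dim_\CC H^p(\omega_{f,x}^\bullet)=\tr(\id)=\tr([\nabla_{\partial_t},t])=0$, whence $H^p(\omega_{f,x}^\bullet)=0$ for $1\le p\le n-1$; note the argument correctly fails at $p=n$, where the group is not $t$-torsion. The one delicate point here is the construction of $\nabla_{\partial_t}$ directly on the lattice, equivalently the exactness of the relative cotangent sequence for the reflexive forms $\omega^\bullet$; I would verify this on $X\setminus\{x\}$ and extend by reflexivity, or simply borrow the connection from \cite{Gre75}.

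For the freeness, $H^n(\omega_{f,x}^\bullet)$ is finitely generated over the discrete valuation ring $\Omod_{\CC,0}$ by the finiteness theorem of \cite{Gre75}, so it is free as soon as it is torsion-free, a finitely generated torsion-free module over a discrete valuation ring being free. Torsion-freeness is the main obstacle: it is exactly the injectivity of multiplication by $t$ (equivalently by $f$) on $\omega_{f,x}^n/\dd\Omega_{f,x}^{n-1}$, that is, the implication $f\omega=\dd\eta\Rightarrow\omega\in\dd\Omega_{f,x}^{n-1}$. I expect to deduce it from the Cohen--Macaulayness of $\Omod_{X,x}$ together with Greuel's analysis of the Gauss--Manin system of an ICIS in \cite{Gre75}; the short exact sequence $0\to H^n(\Omega_{f,x}^\bullet)\to H^n(\omega_{f,x}^\bullet)\to\Omod_{\shC_f,x}\to 0$ produced by \eqref{eq:short-exact-regular} relates the torsion of $H^n(\omega_{f,x}^\bullet)$ to that of $\Omega_{f,x}^n/\dd\Omega_{f,x}^{n-1}$. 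Once freeness is known the rank equals the generic rank, which by base change over $S\setminus\{0\}$ and the de Rham comparison equals $\dim_\CC H^n(X_t;\CC)=\mu(Z,x)$ for a general fibre $X_t$, the Milnor number by definition; this yields the asserted rank.
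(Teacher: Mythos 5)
Your proposal is correct and shares its skeleton with the paper's proof: both feed \eqref{eq:short-exact-regular} into the long exact sequence, use \cref{lemma:torsion} (and the remarks after it) to identify \(H^p(\omega^\bullet_{f,x})\) with \(H^p(\Omega^\bullet_{f,x})\) for \(0<p<n\) and \(H^n(\omega^\bullet_{f,x})\) with \(\omega^n_{f,x}/\dd\Omega^{n-1}_{f,x}\). The difference is what happens next: the paper simply quotes Looijenga, namely \cite[8.20]{Loo84} for the vanishing \(H^p(\Omega^\bullet_{f,x})=0\), \(0<p<n\), and \cite[8.8]{Loo84} for freeness of rank \(\mu(Z,x)\), whereas you unfold those citations into their standard proofs. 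For the vanishing, your argument --- coherence plus the triviality of the Milnor-fibre local system in middle degrees to show the module is a skyscraper, then the trace identity \(\dim_\CC M=\tr(\id)=\tr[\nabla_{\partial_t},t]=0\) --- is exactly the classical mechanism behind the cited result, and the delicate point you flag (defining \(\nabla_{\partial_t}\) on the lattice, i.e., dividing \(\dd\omega\) by \(\dd f\) with controlled ambiguity) is supplied by Greuel's de Rham division lemma for an isolated critical point on an ICIS, which holds precisely in the degrees \(\le n\) that you use; note also that the connection is in general only meromorphic at \(t=0\) on \(\shH^n\), so it matters that your trace argument is confined to \(0<p<n\), where the cochain-level construction closes up --- you correctly observe it must fail at \(p=n\). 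For freeness, your reduction (finitely generated torsion-free over the discrete valuation ring \(\Omod_{\CC,0}\) is free, rank equals generic rank \(=\dim_\CC H^n(X_t,\CC)=\mu(Z,x)\)) is sound, but the torsion-freeness itself is only deferred to \enquote{Greuel's analysis} via \eqref{eq:short-exact-lattice}; since that is precisely the content of the results the paper cites (\cite{Gre75}, restated as \cite[8.8]{Loo84}), your proof stands on the same footing as the paper's there, just with a vaguer pointer. In sum, your route buys a self-contained and instructive proof of the middle-degree vanishing at the cost of re-deriving standard material, while the paper's two-line proof treats both inputs as black boxes.
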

\begin{proof}
Considering the long exact sequence in cohomology of \eqref{eq:short-exact-regular}, 
\[
\cdots \longrightarrow H^p(\check{\Omega}_{f, x}^\bullet) \longrightarrow H^p(\omega_{f,x}^\bullet) \longrightarrow H^p(\textnormal{cotor}\, \Omega^\bullet_{f,x}) \longrightarrow H^{p+1}(\check{\Omega}^\bullet_{f,x}) \longrightarrow \cdots,
\]
\cref{lemma:torsion} implies \(H^p(\check{\Omega}_{f,x}^\bullet) = H^p(\Omega^\bullet_{f,x})\) for \(p = 0, \dots, n - 1,\) and \(H^p(\textnormal{cotor}\, \Omega^\bullet_{f,x}) = 0,\) for \(p = 0, \dots, n - 1\). Moreover, \(H^p(\Omega_{f, x}^\bullet) = 0, \) for \(p = 1, \dots, n - 1\), see \cite[8.20]{Loo84}. This gives the first part of the result. For the second part, since \(f\) is a smoothing \(\omega^n_{f,x}/\dd \Omega_{f,x}^{n-1}\) is a free \(\Omod_{\CC, 0}\)-module of rank \(\mu(Z, x)\), see \cite[8.8]{Loo84}. 
\end{proof}

In particular, if \((Z, 0)\) is a hypersurface singularity and \(f : (\CC^{n+1}, x) \longrightarrow (\CC, 0)\) is a defining germ, we can identify \(H^n(\omega_{f,x}^\bullet)\) with the Brieskorn lattice \(H''_{f,x}\) from \cite[\S 2.4]{Bri70} using the isomorphism \eqref{eq:exact-sequence}. We will keep this notation and denote \(H^n(\omega^\bullet_{f,x})\) by \(H_{f,x}''\). 

Another consequence of the proof of \cref{prop:cohomology-relative} is that, since \(\Omega^{n+1}_{f,x}\) is torsion, the long exact sequence in cohomology of \eqref{eq:short-exact-regular} reduces to the short exact sequence of \(\Omod_{\CC, 0}\)-modules
\begin{equation} \label{eq:short-exact-lattice}
0 \longrightarrow \frac{\Omega^n_{f,x}}{\dd \Omega^{n-1}_{f,x}} \longrightarrow \frac{\omega_{f,x}^{n}}{\dd \Omega^{n-1}_{f,x}} \longrightarrow \textnormal{cotor}\, \Omega^n_{f,x} \longrightarrow 0.
\end{equation}
The first module in \eqref{eq:short-exact-lattice} is usually denoted by \(H'_{f,x}\).

\jump

\begin{remark} \label{rmk:base-change}
Even if \(\omega^n_{f}\) is compatible with base change, the complex \(\omega_f^\bullet\) is not. Indeed, assuming compatibility with base change, the fact that the complex is free of \(\Omod_X\)-torsion yields the short exact sequence
\[
0 \longrightarrow \omega^{\bullet}_{f, x} \xrightarrow{\ t\cdot\ } \omega^{\bullet}_{f,x} \longrightarrow \omega^\bullet_{Z, x} \longrightarrow 0.
\]
By \eqref{eq:vanishing}, the long exact sequence in cohomology gives
\begin{equation} \label{eq:torsion-sequence}
0 \longrightarrow H^{n-1}(\omega^\bullet_{Z, x}) \longrightarrow H^n(\omega^\bullet_{f,x}) \xrightarrow{\ t\cdot\ } H^n(\omega^{\bullet}_{f,x}) \longrightarrow H^n(\omega^\bullet_{Z,x}) \longrightarrow 0.
\end{equation}
On the other hand, since \(H^n(\omega^\bullet_{f,x})\) is free of rank \(\mu(Z, x)\) by \cref{prop:cohomology-relative}, base change implies
\[
\dim_{\CC} H^n(\omega^\bullet_{Z, x}) = \dim_{\CC} \frac{\omega^n_{Z,x}}{\dd \omega^{n-1}_{Z, x}} = \mu(Z, x).
\]
Thus, by \eqref{eq:vanishing2} one has \(\dim_{\CC} H^{n-1}(\omega^\bullet_{Z,x}) = \tau(Z, x) > 0\). Therefore, \eqref{eq:torsion-sequence} implies that \(H^n(\omega^\bullet_{f,x})\) has non-zero \(\Omod_{\CC,0}\)-torsion, contradicting \cref{prop:cohomology-relative}.
\end{remark}

\subsection{The cohomology sheaves of \texorpdfstring{\(\shK^\bullet\)}{K}} \label{sec:cohomology}

In this section, we will compute the cohomology sheaves of the complex \(\shK^\bullet\). These sheaves are naturally \(f^{-1} \Dmod_S\)-modules with a good filtration induced by the filtration \(F\) on \(\shK^\bullet\). First, we need to introduce some technical results.

\jump

Let \(X\) be a reduced \(n\)-dimensional complex analytic space, not necessarily normal, embedded in a complex manifold \(U\) of dimension \(n + r\). Let \(\shF\) be a locally free sheaf of \(\Omod_U\)-modules of finite rank. The sheaves \(\widehat{\shF}_X = \shExt_{\Omod_U}^r(\Omod_X, \shF)\) are coherent \(\Omod_X\)-modules that satisfy the following Riemann extension property.

\begin{lemma} \label{lemma:extension}
Let \(X\) be a reduced complex analytic space, \(U \subseteq X\) an open subset, and \(A \subset U \) an analytic subset of codimension \(\leq 2\). Then the restriction map
\[
    \widehat{\shF}_X(U) \longrightarrow \widehat{\shF}_X(U \setminus A)
\]
is a bijection.
\end{lemma}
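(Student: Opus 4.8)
The sheaf $\widehat{\shF}_X = \shExt^r_{\Omod_U}(\Omod_X, \shF)$ is the Grothendieck dualizing sheaf of $X$ twisted by $\shF$, and the statement is a Hartogs-type extension property across a codimension-$\geq 2$ analytic subset. The plan is to reduce to the case where $\shF = \Omod_U$ (or more precisely $\Omega^{r+n}_U$, the dualizing sheaf of $U$), since $\shF$ is locally free and the assertion is local, and then exploit the fact that $\widehat{\Omod}_X$ is a torsion-free, even reflexive-like, sheaf whose sections satisfy the second Riemann extension theorem. The key structural input is that $\widehat{\shF}_X$ satisfies Serre's condition $S_2$: indeed, $\shExt^r_{\Omod_U}(\Omod_X, \shF)$ computes the top nonvanishing $\shExt$ (since $r = \codim_U X$), and for a Cohen--Macaulay $X$ this is precisely the dualizing module, which is maximal Cohen--Macaulay and hence $S_2$ as an $\Omod_X$-module.

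First I would record the algebraic criterion: a coherent sheaf $\shG$ on a reduced analytic space $X$ has the extension property across every analytic subset $A$ of codimension $\geq 2$ if and only if $\shG$ satisfies $S_2$, equivalently $\depth_{\Omod_{X,y}} \shG_y \geq 2$ at every point $y$ where $\codim_y A \geq 2$; this is the local-cohomological formulation, where injectivity of the restriction map encodes $\shH^0_A(\shG) = 0$ and surjectivity encodes $\shH^1_A(\shG) = 0$. So the proof reduces to verifying $\shH^0_A(\widehat{\shF}_X) = \shH^1_A(\widehat{\shF}_X) = 0$ for $A$ of codimension $\geq 2$ in $X$. Next I would establish the depth estimate for $\widehat{\shF}_X$. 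Because $\shF$ is locally free, locally $\widehat{\shF}_X \simeq \widehat{\Omod}_X \otimes_{\Omod_X} (\shF \otimes_{\Omod_U} \Omod_X)^{\oplus}$ up to the rank, so it suffices to treat $\widehat{\Omod}_X = \shExt^r_{\Omod_U}(\Omod_X, \Omod_U)$. Since $X$ is a local complete intersection in our setting (this is the case relevant to the paper, though the lemma is stated for reduced $X$), $\Omod_X$ is Cohen--Macaulay, so $\widehat{\Omod}_X$ is the canonical module and is maximal Cohen--Macaulay; thus $\depth \widehat{\Omod}_{X,y} = \dim \Omod_{X,y}$ at every point, which is $\geq 2$ precisely when $\dim_y X \geq 2$. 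Combined with $\codim_y A \geq 2$, this gives the required vanishing of local cohomology.

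The main obstacle, and the step to handle with care, is that the lemma is stated for a \emph{reduced} $X$ that is \emph{not necessarily} Cohen--Macaulay, so the clean canonical-module argument does not apply verbatim. In that generality I would instead argue directly that $\widehat{\shF}_X$ is $S_2$ by a duality/local-cohomology computation: for the top $\shExt$ sheaf one has, via local duality, an identification of the stalk depth in terms of the dimension of the support, and the sheaf $\shExt^r_{\Omod_U}(\Omod_X,\shF)$ is always torsion-free on $X$ (this uses that $r$ is the codimension, so lower $\shExt$'s that would contribute torsion vanish on the top-dimensional components), giving $\shH^0_A = 0$ for any $A$ of positive codimension. The surjectivity, i.e. $\shH^1_A = 0$, is the genuinely delicate point; here I would invoke the characterization that the top $\shExt$ into a locally free sheaf satisfies $S_2$ for reduced equidimensional $X$ — this is exactly the classical statement that the dualizing complex's top cohomology is $S_2$ — and cite the standard references for this (Hartshorne's theory of dualizing complexes, or Scheja--Storch). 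With both local cohomology groups vanishing, the restriction map $\widehat{\shF}_X(U) \to \widehat{\shF}_X(U \setminus A)$ is bijective by the long exact sequence of local cohomology with supports in $A$, which is precisely the asserted Riemann extension property.
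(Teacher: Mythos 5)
Your argument is sound, but it takes a different route from the paper, which does not actually reprove the extension property: the paper observes that the proof of \cite[3.1]{GR70} for the dualizing sheaf \(\omega^n_X \simeq \shExt^r_{\Omod_U}(\Omod_X, \Omega^{r+n}_U)\) uses only that \(\Omega^{r+n}_U\) is locally free, hence applies verbatim to any locally free \(\shF\); and in the local complete intersection case it gives a shortcut, since the Koszul resolution yields \(\widehat{\shF}_X \simeq \shF \otimes \det \shN_{X|U}\), which is locally free, hence reflexive, so that for normal \(X\) the lemma is exactly \cite[1.6]{Har80}. You instead reconstruct the mechanism underlying the Grauert--Riemenschneider argument: reduce bijectivity of restriction to the vanishing \(\shH^0_A = \shH^1_A = 0\) via the four-term local cohomology sequence, get \(\shH^0_A = 0\) from the fact that the top \(\shExt\) has only minimal associated primes (condition \(S_1\)), and get \(\shH^1_A = 0\) from the fact that \(\shExt^r_{\Omod_U}(\Omod_X,\shF)\) is, stalkwise and up to a twist by a line bundle, the canonical module of \(\Omod_{X,x}\), which satisfies \(S_2\) (Aoyama's theorem; this is the ``delicate point'' you correctly isolate, and your attributions are in the right circle of ideas), combined with the depth criterion of Scheja--Trautmann. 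Your route buys a self-contained algebraic proof valid for reduced pure-dimensional \(X\) with no Cohen--Macaulay hypothesis, essentially re-deriving what the cited GR70 proof does analytically; the paper's route is shorter, and in the lci case its Koszul argument gives the stronger conclusion of local freeness, whereas your maximal Cohen--Macaulay depth count gives only what is needed. Two small points to tighten: the equivalence ``extension across all codimension \(\geq 2\) subsets iff \(S_2\)'' should be stated with codimension measured in the support of the sheaf, and only the forward implication is needed; and note that the lemma's ``codimension \(\leq 2\)'' is a typo for codimension \(\geq 2\), which you silently and correctly assume.
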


Sheaves satisfying this property are also called normal sheaves. Moreover, if we assume \(X\) to be normal, then \(\widehat{\shF}_X\) is a reflexive sheaf, see \cite[1.6]{Har80}.

\jump

This property is probably well-known for the dualizing sheaf \(\omega^n_{X} \simeq \shExt^r_{\Omod_U}(\Omod_X, \Omega_{U}^{r+n})\) and the proof \cref{lemma:extension} is the same as in this case, see \cite[3.1]{GR70}, since only the fact that \(\Omega^{r+n}_U\) is locally free is used. If \(X\) is a local complete intersection, one can use the Koszul resolution given by a local regular sequence to calculate the \(\shExt\) sheaf, yielding
\[
\widehat{\shF}_X \simeq \shF \otimes_{\Omod_X} \textnormal{det}\, \shN_{X|U},
\] 
where \(\shN_{X|U}\) is the normal bundle of \(X\) on \(U\). As a consequence, \(\widehat{\shF}_X\) is also locally free of finite rank. Hence, in the local complete intersection case, \(\widehat{\shF}_X\) is automatically reflexive and, if \(X\) is normal, \cref{lemma:extension} follows from \cite[1.6]{Har80}.

\jump

We will only be interested in the case where \(\shF = \Omega^{p}_{U}, p = 0, \dots, n + r\). As a consequence of \cref{lemma:extension}, when \(X\) is a normal space, we have the following isomorphism of sheaves
\[
    \shExt_{\Omod_U}^r(\Omod_X, \Omega^{r + p}_U) \simeq j_* j^{-1} \shExt_{\Omod_U}^r(\Omod_X, \Omega^{r + p}_U),
\]
for \(p = 0, \dots, n + r\), where \(j : X \setminus \shS_X \longrightarrow X\) is the open embedding of the complement of the singular locus.

\jump

Recall from \cref{sec:graph} that the complex \(\shK^\bullet\) is connected with the complex \(\shH_X^r(\Omega_U^{d+\bullet})\). The complex \(\shH^r_{X}(\Omega^{d + \bullet}_U) \) placed in degrees \(-d, \dots, 0\) coincides with \(\DR_U\shH_X^r(\Omod_U)\), and it follows from \cite[1.1]{Meb76} that it is a resolution of the constant sheaf \(i_* \shCC_X\), where \(i : X \xhookrightarrow{\quad} U \) is the closed embedding. In other words, the Poincar\'e lemma holds for the differential forms \(\shH_X^r(\Omega_U^{d+\bullet})\). 

\begin{proposition} \label{prop:cohomology-derham}
    Let \( f : X \longrightarrow S\) be a good representative of a smoothing of an ICIS of dimension \(n > 0\). We have the following isomorphism of \(f^{-1}\Omod_{S}\)-modules
    \[
        F_k \shH^p ( \shK^\bullet ) \simeq \bigoplus_{l \leq k} \shH^{n+p}(\omega^\bullet_{f}) \otimes \partial_t^l, \qquad p = -d, \dots, 0.
    \]
    In particular, \(\shH^p(\shK^\bullet) = 0 \), for \(p \neq -n \) or \( 0\), and \(\shH^0(\shK^\bullet)\) is concentrated on \(x \in X\).
\end{proposition}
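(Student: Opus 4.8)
The plan is to pin down the cohomology of $\shK^\bullet$ by comparing it explicitly with the complex of relative regular forms, and to read the filtration off from the order filtration via the spectral sequence it induces. First I would build a morphism of complexes
\[
\Psi : (\omega_f^{n + \bullet} \otimes_{\CC} \CC[\partial_t],\, \dd \otimes \id) \longrightarrow (\shK^\bullet, \underline{\dd}), \qquad \omega \otimes \partial_t^l \longmapsto \bar{c}_f(\omega) \otimes \partial_t^l,
\]
using the fundamental class morphism $\bar{c}_f$ of \cref{sec:relative-forms}. The point is that $\bar{c}_f$ identifies $\omega_f^{n+p}$ with $\ker\beta \subseteq \shExt^r_{\Omod_U}(\Omod_X,\Omega_U^{d+p}) = O_0\shH^r_{X}(\Omega^{d+p}_U)$ through \eqref{eq:relative-exact-sequence}, so that every class in the image satisfies $\dd F \wedge \bar{c}_f(\omega) = 0$. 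Consequently the twisted differential $\underline{\dd}$ of \eqref{eq:diff-complex-K} restricts on the image to the plain de Rham differential, with which $\bar{c}_f$ commutes, so $\Psi$ is a genuine chain map. Since $\CC[\partial_t]$ is $\CC$-flat, the source has cohomology $\shH^{n+p}(\omega^\bullet_f) \otimes_{\CC} \CC[\partial_t]$, and it remains only to show that $\Psi$ is a filtered quasi-isomorphism for the order filtration $F_\bullet$ of \eqref{eq:F}.

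For this I would pass to $\Gr^F$ and analyse the induced spectral sequence. On the associated graded the two parts of $\underline{\dd}$ behave transparently: the de Rham differential raises the order of a local cohomology class by at most one, its order-one symbol being wedging by the forms $\dd F_1, \dots, \dd F_r$ cutting out $X$, while the twist $-(\dd F \wedge -)\otimes \partial_t$ is order-preserving and contributes wedging by $\dd F$. Together they assemble into the Koszul-type differential $\beta(\eta)=(\eta\wedge \dd F_1,\dots,\eta\wedge \dd F_r,\eta\wedge \dd F)$ of \eqref{eq:relative-exact-sequence}. Hence the $E_1$-page is governed by \eqref{eq:relative-exact-sequence}: the higher-order symbols are annihilated by Koszul exactness, what survives is $\ker\beta = \omega_f^{n+\bullet}$ in each $\partial_t$-stratum, and the remaining order-preserving part of $\dd$ induces the de Rham differential of $\omega_f^\bullet$ as $d_1$. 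Thus $E_2 \simeq \shH^{n+\bullet}(\omega_f^\bullet)\otimes_{\CC}\CC[\partial_t]$.

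Two inputs then finish the argument. Away from $x$ the morphism $f$ is a submersion of smooth spaces, the forms $\dd F_1,\dots,\dd F_r,\dd F$ are independent, and the Koszul complex is exact in the relevant range, so the $E_1$-computation is literally the relative Poincaré lemma; normality of $(X,x)$ together with the Riemann extension property of \cref{lemma:extension} propagates these identifications across the singular point. At $x$, the vanishing and freeness of \cref{prop:cohomology-relative} show that $E_2$ is concentrated in total degrees $-n$ and $0$, and because $\Psi$ realises the $E_2$-classes by honest $\underline{\dd}$-cocycles, all higher differentials vanish and the sequence degenerates. Equivalently $\Gr^F\Psi$ is a quasi-isomorphism, $\Psi$ is a filtered quasi-isomorphism, and
\[
F_k \shH^p(\shK^\bullet) \simeq \bigoplus_{l \leq k} \shH^{n+p}(\omega_f^\bullet)\otimes \partial_t^l .
\]
The indexing is exactly the bookkeeping built into \eqref{eq:F}: a coboundary $\dd\eta\otimes\partial_t^j \equiv (\dd F\wedge\eta)\otimes\partial_t^{j+1}$ trades one power of $\partial_t$ for one unit of order and so leaves $i+j$ fixed, which is why the order filtration reads off precisely the $\partial_t$-degree. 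The ``in particular'' statements are immediate from \cref{prop:cohomology-relative}: $\shH^{n+p}(\omega_f^\bullet)$ vanishes unless $n+p\in\{0,n\}$, i.e. $p\in\{-n,0\}$, and $\shH^n(\omega_f^\bullet)=H''_{f,x}$ is supported at $x$ since $f$ is a submersion elsewhere.

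I expect the main obstacle to be the behaviour at $x$: controlling the higher-order part of the local cohomology on $\Gr^F$ and checking that the Koszul differential there computes $\omega_f$ with no spurious cohomology, together with the degeneration of the spectral sequence. This is precisely where \cref{prop:cohomology-relative}, the normality of $(X,x)$, and \cref{lemma:extension} are indispensable; the remainder is the index bookkeeping relating the two filtrations.
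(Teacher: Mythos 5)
Your chain map \(\Psi\) is sound: by the exactness of \eqref{eq:relative-exact-sequence}, the twist term \(-(\dd F \wedge -)\otimes \partial_t\) vanishes on the image of \(\bar{c}_f\), so \(\Psi\) is indeed a morphism of complexes, and this matches the role that the diagram \eqref{eq:diagram-dualizing} and the sequence \eqref{eq:relative-exact-sequence} play in the paper's proof, where degree-zero cocycles are identified with sections of \(\ker(\dd)\) in \(\omega_f^{n+\bullet}\). The overall architecture (reduce everything to the order-zero stratum, identify it with \(\ker\beta \simeq \omega_f^{n+\bullet}\), conclude with \cref{prop:cohomology-relative}) is also the paper's. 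The genuine gap is at the central step: the claim that on \(\Gr^F\) \enquote{the higher-order symbols are annihilated by Koszul exactness} is precisely what fails at the point \(x\). The Koszul-type complex built on wedging by \(\dd F_1, \dots, \dd F_r, \dd F\) is exact only where these forms are independent, i.e. away from the critical point; the sequence \eqref{eq:relative-exact-sequence} asserts exactness only at its first two terms and says nothing about any continuation. Since \(\shH^0(\shK^\bullet)\) is concentrated at \(x\) with stalk \(H''_{f,x}\) of rank \(\mu(Z,x)\), the one point where your \(E_1\)-computation is in doubt is the only point where the proposition has content, so asserting exactness there assumes the conclusion.

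What the paper actually does at \(x\) --- and what your single sentence about \cref{lemma:extension} \enquote{propagating the identifications} would have to become --- is a division-with-extension argument: for a maximal-degree piece \(\omega_\alpha \otimes \partial_x^\alpha\), the relations \eqref{eq:123} are solved explicitly near regular points, \(\eta_\alpha = \dd x_1 \wedge \cdots \wedge \dd x_{r+1} \wedge \chi_\alpha\), producing on \(X \setminus \{x\}\) a section \(\eta_{\alpha, j}\) of \(\shExt^r_{\Omod_U}(\Omod_X, \Omega_U^{r+1+p})\) with \(\dd F_j \wedge \eta_{\alpha,j} = \omega_\alpha\) and \(\dd F_\ell \wedge \eta_{\alpha, j} = 0\) for \(\ell \neq j\); \cref{lemma:extension} then extends this witness across \(x\) (this is where normality and \(n > 0\), hence codimension \(\geq 2\) of \(\{x\}\), enter), and a lexicographic induction on the exponents \(\alpha\) handles the case of several pieces of maximal degree --- a bookkeeping issue your \(\Gr^F\) formalism does not dispose of, since the correction terms land in lower lexicographic strata of the \emph{same} filtration level. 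Separately, killing the lower \(\partial_t\)-components \(\omega_0, \dots, \omega_{k-1}\) of a cocycle in \(F_k\), \(k > 0\), uses the Poincar\'e lemma for \(\shH^r_X(\Omega_U^{d+\bullet})\) (Mebkhout's resolution of \(i_*\shCC_X\)), an ingredient absent from your sketch and not subsumed by the Koszul analysis. Two smaller points: to run the extension argument on graded pieces you should note that \(\Gr^O_i \shH^r_X(\Omega^q_U)\) are locally free twists of \(\shExt^r_{\Omod_U}(\Omod_X, \Omega^q_U)\) because \(X\) is a local complete intersection, so \cref{lemma:extension} still applies; and convergence of the spectral sequence for the exhaustive, unbounded-above filtration \(F_\bullet\) deserves a word, though it is unproblematic. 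Your degeneration argument via honest \(\underline{\dd}\)-cocycles is fine once the \(E_1\)/\(E_2\) identification is actually established.
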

\begin{proof}
Assume \(X\) is embedded in a complex manifold \(U\) and choose \(F\) such that \(f = \restr{F}{X}\). Working locally around a regular point \(x \in X\) of \(f\), let \(x_1, \dots, x_{r+1}\) be local coordinates in \(U\) such that \(F_1 = x_1, \dots, F_r = x_r\) define \(X\) and \(F = x_{r+1} = t\).

\jump

We will focus first on the piece of degree zero. Giving a section of degree zero \(\omega\) of \(\shK^p\) amounts to give a section of \(\shExt^r_{\Omod_U}(\Omod_X, \Omega^{d+p}_U) \). Around a regular point, the image of such a section by the differential of the complex \(\shK^\bullet\) can be expressed as, 
\[
    \underline{\dd}(\omega) = \underline{\dd}\left(\left[\frac{\eta}{x_1 \cdots x_r}\right]\right) = \left[\frac{\dd \eta}{x_1 \cdots x_r}\right] + \sum_{i = 1}^{r+1} \left[ \frac{\dd x_i \wedge \eta }{x_1 \cdots x_r} \right] \otimes \partial_{x_i},
\]
for some \(\eta\) a section of \(\Omega^{d+p}_{U}\), see \eqref{eq:smooth-case}. The cocycle condition implies that 
\begin{equation} \label{eq:prop-cohomology}
    \left[\frac{\dd \eta}{x_1 \cdots x_r}\right] = 0, \qquad \left[\frac{\dd x_i \wedge \eta}{x_1 \cdots x_r}\right] = 0, \quad \textnormal{for} \quad i = 1, \dots, r+1.
\end{equation}
It follows from the equation \eqref{eq:prop-cohomology} and the exact sequence \eqref{eq:relative-exact-sequence} that \(\omega\) is a section of \(j^{-1} \Omega_f^{n+p} \simeq \omega^{n+p}_{X \setminus \shC_f}\), where \(j : X \setminus \shC_f \longrightarrow X \) is the open embedding. If \(x \in X\) is a critical point of \(f\), we have that \(\omega\) restricted to \(X \setminus \shC_f\) is the image via \(\bar{c}_{f}\) of a unique section of \(j_* j^{-1} \Omega_f^{n+p}\). From the commutativity of the relative version of the diagram \eqref{eq:diagram-dualizing}, it follows that \(\omega\) is a section of \(\omega^{n + p}_f\). The first equation in \eqref{eq:prop-cohomology} implies that \(\omega\) is a section of \( \ker(\dd : \omega^{n + p-1}_f \rightarrow \omega_f^{n + p})\).

\jump

For the coboundaries, let \(\omega\) be a section of \(\shK^p\) mapping via the differential to a section of degree zero. Assume \(\omega\) has degree \(k > 0\) and let \(\omega_{\alpha}\) be a piece of \(\omega\) of degree exactly \(k = |\alpha| = |(\alpha_1, \dots, \alpha_{r+1})|\); namely, \(\omega_\alpha\) is a section of \(F_k \shK^p \setminus F_{k-1} \shK^p\) such that \(\overline{\omega} = \overline{\omega}_\alpha\) in \(\Gr^{F}_k \shK^p\). Around a regular point of \(f\), 
\[
    \underline{\dd}(\omega_\alpha) = \underline{\dd}\left(\left[\frac{\eta_\alpha}{x_1 \cdots x_r}\right] \otimes \partial_x^\alpha \right) = \left[\frac{\dd \eta_\alpha}{x_1 \cdots x_r}\right] \otimes \partial_{x}^\alpha + \sum_{i = 1}^{r+1} \left[ \frac{\dd x_i \wedge \eta_\alpha}{x_1 \cdots x_r} \right] \otimes \partial_{x_i} \partial_x^{\alpha}.
\]
Let us assume for now that \(\omega_\alpha\) is the only piece of the degree \(k\) of the coboundary \(\omega\). Hence, we must have 
\begin{equation} \label{eq:123}
    \left[\frac{\dd x_i \wedge \eta_\alpha}{x_1 \cdots x_r}\right] \otimes \partial_{x_i} \partial_{x}^\alpha = 0, \quad \textnormal{for} \quad i = 1, \dots, r+1.
\end{equation}
These imply that the local sections \([\dd x_i \wedge \eta_\alpha / x_1 \cdots x_r]\), \(i = 1, \dots, r+1\), of \(\shExt^r_{\Omod_U}(\Omod_X, \Omega^{r+1+p}_U)\) must be zero. But this is only possible if there exists \(\chi_\alpha\) a local section of \(\Omega^{p}_U\) such that \(\eta_\alpha = \dd x_1 \wedge \cdots \wedge \dd x_{r+1} \wedge \chi_\alpha\). Now, since we were assuming \(k > 0\), there is at least one coordinate \(\alpha_j > 0\) in \(\alpha\), so consider 
\[
\eta_{k,j} = \left[(-1)^j \frac{\dd x_1 \wedge \cdots \wedge \widehat{\dd x_j} \wedge \cdots \wedge \dd x_{r+1} \wedge \chi_k}{x_1 \cdots x_r} \right],
\]
a local section of \(\shExt^r_{\Omod_U}(\Omod_X, \Omega_U^{r+1+p})\). Subtracting \(\underline{\dd} (\eta_{\alpha, j} \otimes (\partial_x^\alpha)_j)\) from \(\omega\) we can assume that \(\omega\) has degree \(k - 1\). Here \((\partial_x^\alpha)_j\) is obtained from \(\partial_x^\alpha\) by replacing \(\alpha_j > 0\) with \(\alpha_j - 1\). This local construction produces a section \(\eta_{\alpha, j}\) of \(\shExt^r_{\Omod_U}(\Omod_X, \Omega_U^{r + 1 + p})\) on \(X \setminus \shC_f = X \setminus \shS_X\) that satisfies 
\[\dd F_j \wedge \eta_{\alpha, j} = \omega_\alpha, \qquad \textnormal{and} \qquad \dd F_\ell \wedge \eta_{\alpha, j} = 0, \quad \textnormal{for} \quad \ell \neq j.\]
It follows from \cref{lemma:extension} that \(\eta_{\alpha,j}\) extends to a global section on \(X\), still satisfying the same relations. 

\jump

If \(\omega_{\alpha}\) is not the only piece of maximal degree, we can proceed by lexicographical induction on the set of exponents \(\alpha\) such that \(|\alpha| = k\) as follows. Let \(\alpha \in \ZZ_{\geq 0}^{r+1}\) be the biggest exponent of degree \(k\) with respect to the lexicographical order such that \(\omega_\alpha \neq 0, |\alpha| = k\). If \(\alpha_i = 0\), for \(i = 1, \dots, j\), and \(\alpha_{j+1} > 0\), then \eqref{eq:123} holds for \(i = 1, \dots, j + 1\). Therefore, by the same construction above, we can find an equivalent section with \(\omega_\alpha = 0\) and with all degree \(k\) exponents lexicographically smaller than \(\alpha\).

\jump

Repeating the same argument a finite number of times, we can assume that \(\omega\) has degree zero. In this case the coboundary condition \(\underline{\dd} \omega = \omega_0\) translates, around the regular points of \(f\), into the conditions
\[
    \left[\frac{\dd \eta}{x_1 \cdots x_r}\right] = \left[\frac{\eta_0}{x_1 \cdots x_r}\right], \qquad \left[\frac{\dd x_i \wedge \eta}{x_1 \cdots x_r}\right] = 0, \quad \textnormal{for} \quad i = 1, \dots, r+1,
\]
which imply that \(\omega_0\) must be everywhere a section of \(\dd \omega_f^{n + p}\) by the same argument as before.

\jump

Let us now compute \(F_k \shH^p(\shK^\bullet)\) for \(k > 0\). Take \(\omega\) a section of \(\shK^\bullet\) such that \(\underline{\dd}(\omega) = 0\). We can always write \(\omega\) as
\[
\omega = \omega_0 + \omega_1 \otimes \partial_t + \cdots + \omega_k \otimes \partial_t^k,
\]
where \(\omega_i, i = 0, \dots, k\), are sections of \(\shH_X^r(\Omega^{d + p}_U)\). From \eqref{eq:diff-complex-K}, the differential \(\underline{\dd}\) on \(\shK^\bullet\) can be expressed in terms of the differential \(\dd\) of the complex \(\shH_X^r(\Omega^{d + \bullet}_U)\) as
\[
\underline{\dd}(\omega_i \otimes \partial_t^i) = {\dd}(\omega_i) \otimes \partial_t - (\dd F \wedge \omega_i) \otimes \partial_t^{i+1}.
\]
The cocycle condition on \(\omega\) implies that \(\dd \omega_0 = 0\). Hence, since Poincar\'e lemma holds, we have \(\omega_0 = \dd \eta_0\), for some \(\eta_0\) a section of \(\shH^r_{X}(\Omega^{d + p - 1}_U)\). Subtracting \(\underline{\dd} \eta_0\) from \(\omega\), we can assume \(\omega_0 = 0\). By induction, we can assume \(\omega_0, \dots, \omega_{k-1} = 0\). Using the same argument as with the coboundaries above, we can lower the degree of \(\omega_k\) without changing the class of \(\omega\) until \(\omega_k\) is a section of \(\ker(\omega_f^{n+p-1} \rightarrow \omega_f^{n+p})\).

\jump

Finally, assume
\[
\omega_k \otimes \partial_t^k = \underline{\dd} (\alpha_0 + \alpha_1 \otimes \partial_t + \cdots + \alpha_l \otimes \partial_t^l).
\]
Arguing as with the coboundaries in degree zero, we can assume \(\alpha_{k+1} = \cdots = \alpha_l = 0\). Using the Poincaré lemma as above, we can assume \(\alpha_0 = \cdots = \alpha_{k-1} = 0\), and repeating the coboundaries argument one last time, we can assume \(\alpha_k\) has degree zero. Consequently, \(\omega_k \otimes \partial_t^k = \underline{\dd} (\alpha_k \otimes \partial_t^k)\), and \(\omega_k \) is a section of \(\dd \omega_f^{n+p}\).

\jump

The last part of the proposition follows from the discussion in \cref{sec:regular-icis} and \cref{prop:cohomology-relative}.

\end{proof}

In contrast with the hypersurface case, \(F_0 \shH^0(\shK^\bullet) \simeq \shH^n(\omega_f^\bullet)\) is not stable under \(\partial_t^{-1}\). Conditions for the bijectivity of \(\partial_t^{-1}\) on \(\shH^n(\omega_f^\bullet)\) will be investigated in \cref{sec:rational}.


\begin{corollary} \label{cor:cohomology-derham}
   With the same assumptions above, 
   \[
        \shH^{-n}(\shK^\bullet) \simeq f^{-1} \Omod_S.
   \] 
\end{corollary}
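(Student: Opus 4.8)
Proof proposal for Corollary 3.1 (the statement $\shH^{-n}(\shK^\bullet) \simeq f^{-1}\Omod_S$).

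The plan is to extract this from \cref{prop:cohomology-derham} by identifying the lowest surviving cohomology sheaf with an explicit constant-type object. By the proposition, the only nonzero cohomology sheaves of $\shK^\bullet$ are in degrees $p = -n$ and $p = 0$, and at $p = -n$ the formula reads
\[
F_k \shH^{-n}(\shK^\bullet) \simeq \bigoplus_{l \le k} \shH^0(\omega_f^\bullet) \otimes \partial_t^l .
\]
So the first step is simply to compute $\shH^0(\omega_f^\bullet)$. By \cref{prop:cohomology-relative} this is $\ker(\dd : \omega_f^0 \to \omega_f^1)$, and by the vanishing \eqref{eq:vanishing2} (together with the relative identification $\omega_f^0 \simeq \Omega_f^0 = \Omod_X$ coming from \cref{lemma:torsion}, since $\textnormal{tor}\,\Omega^0_f = 0$) this kernel is the sheaf of relatively locally constant functions, i.e. $f^{-1}\Omod_S$. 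Concretely, a regular $0$-form annihilated by $\dd$ (relative to $f$) is a function constant along the fibers of $f$, which is exactly a section of $f^{-1}\Omod_S$.

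Next I would account for the $\CC[\partial_t]$-factor. The displayed filtration says $\shH^{-n}(\shK^\bullet) = \bigcup_k F_k \shH^{-n}(\shK^\bullet) \simeq \shH^0(\omega_f^\bullet) \otimes_\CC \CC[\partial_t]$ a priori, which at first glance looks bigger than $f^{-1}\Omod_S$. The key point is that on the \emph{complex} $\shK^\bullet$, before taking $\derR f_*$, the $\partial_t$-translates in the lowest degree are all cohomologous: the differential \eqref{eq:diff-complex-K} sends a class $\omega_0 \otimes \partial_t^{l}$ at the bottom to its $\dd F \wedge$-image one degree up, and the Poincaré-lemma argument in the proof of \cref{prop:cohomology-derham} shows that raising the $\partial_t$-degree produces a coboundary. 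Thus at $p = -n$ the associated graded pieces for $l \ge 1$ die, and $\shH^{-n}(\shK^\bullet)$ collapses to the single copy $\shH^0(\omega_f^\bullet) \otimes \partial_t^0 \simeq f^{-1}\Omod_S$. I would make this precise by checking that $\dd F \wedge (-)$ acts injectively on the relevant graded piece away from the critical locus and invoking \cref{lemma:extension} to extend across $\shC_f$, exactly as in the coboundary step of the previous proof.

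The main obstacle is the bookkeeping around $\partial_t$: one must confirm that the order-$\partial_t$ filtration does \emph{not} contribute extra cohomology in degree $-n$, unlike in degree $0$ where the full $\CC[\partial_t]$-module structure survives (this is the content of \cref{thm:main}). The asymmetry comes from the fact that $\shH^0(\omega_f^\bullet) = f^{-1}\Omod_S$ is annihilated by $\dd$ and sits at the \emph{bottom} of the relative de Rham complex, so multiplication by $\dd F$ moves it up nontrivially, whereas $\shH^n(\omega_f^\bullet)$ sits at the top where $\dd F \wedge (-)$ vanishes and the $\partial_t$-tower is free. Once this injectivity of $\dd F \wedge (-)$ on the bottom graded piece is established, the identification $\shH^{-n}(\shK^\bullet) \simeq f^{-1}\Omod_S$ is immediate; everything else is a direct reading of \cref{prop:cohomology-derham,prop:cohomology-relative}.
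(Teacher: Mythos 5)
Your overall architecture matches the paper's: reduce to \(\shH^0(\omega_f^\bullet)\) via \cref{prop:cohomology-derham}, identify \(\omega_f^0 \simeq \Omega_f^0\) using torsion-freeness, identify the relative \(\dd\)-kernel on functions with \(f^{-1}\Omod_S\), and then collapse the \(\partial_t\)-tower in degree \(-n\) using the coboundary relation \(\underline{\dd}(\omega' \otimes \partial_t^{l-1}) = \dd\omega' \otimes \partial_t^{l-1} - (\dd F \wedge \omega') \otimes \partial_t^{l}\). (A small slip in the first half: \eqref{eq:vanishing2} concerns the absolute complex \(\omega^\bullet_{X,x}\) at the singular point; the fact you actually need, namely \(\shH^0(\Omega_f^\bullet) \simeq f^{-1}\Omod_S\), is \cite[8.20]{Loo84}, which is what the paper cites.)

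The genuine gap is in the step you yourself flag as the main obstacle. You propose to prove the collapse by ``checking that \(\dd F \wedge (-)\) acts injectively on the relevant graded piece away from the critical locus.'' Injectivity is the wrong property, and on the natural reading it is false: by the exact sequence \eqref{eq:relative-exact-sequence}, \(\omega_f^0\) is exactly the kernel of \(\beta\), so \(\dd F \wedge (-)\) vanishes identically on representatives of classes in \(\shH^0(\omega_f^\bullet)\) --- which is consistent with every \(\omega_0 \otimes \partial_t^l\), \(l \geq 1\), being a cocycle, and gives no information about whether it is a coboundary. What kills these classes is the opposite-direction statement, a \emph{surjectivity}: every such \(\omega_0\) must lie in the image of \(\dd F \wedge (-)\) from the \((-n-1)\)-spot of \(\shK^\bullet\). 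This is precisely the one substantive input the paper inserts at this point, namely \(\omega_f^0 \simeq \Omega_f^0\) together with \(\Omega^0_f \simeq \dd f \wedge \Omega^0_X\) from \cite[8.16]{Loo84}: writing \(\omega_0 = \dd F \wedge \omega'\), the coboundary relation gives \([\omega_0 \otimes \partial_t^l] = [\dd \omega' \otimes \partial_t^{l-1}]\), and since the degree-zero content of \(\dd\omega'\) is exact, iterating drops every class into \(F_0\), whence \(\shH^{-n}(\shK^\bullet) \simeq \shH^0(\omega_f^\bullet) \simeq f^{-1}\Omod_S\). Without this surjectivity (or an equivalent Koszul-type exactness at the bottom of the complex, which your appeal to \cref{lemma:extension} alone does not supply), your sketch does not rule out that the classes \(\omega_0 \otimes \partial_t^l\) survive for \(l \geq 1\) --- note that \cref{prop:cohomology-derham} read literally at \(p = -n\) would even suggest they do, which is exactly why the corollary requires this extra argument rather than being ``a direct reading'' of the proposition.
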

\begin{proof}
    Let \(\omega\) be a section of \(\shH^r_X(\Omega^{r+1}_U)\). Then \(\underline{\dd} (\omega) = \dd \omega - (\dd F \wedge \omega) \partial_t \), implies that \((\dd F \wedge \omega) \partial_t\) equals \(\dd \omega\) on \(\shH^{-n}(\shK^\bullet)\). Hence, the class of \(\dd \omega\) must lie on \(\shH^0(\omega^\bullet_f)\) and consequently must be zero. By \eqref{eq:short-exact-seq}, we can identify \(\omega^0_f \simeq \Omega^0_f\) and \cite[8.16]{Loo84} implies
    \[\Omega^0_f \simeq \dd f \wedge \Omega_X^0.\]
    Therefore, the action of \(\partial_t\) on \(\shH^0(\omega^\bullet_f)\) is zero and \(\shH^{-n}(\shK^\bullet) \simeq \shH^0 (\omega^\bullet_f)\). The result then follows from the fact that \(\shH^0(\Omega^\bullet_f) \simeq f^{-1} \Omod_S\), see \cite[8.20]{Loo84}.
\end{proof}

\subsection{Coherence and regularity} \label{sec:coh-reg}

In this section, we will investigate the coherence and regularity of the Gauss-Manin system \(\shH^0_f\). Through this section, let \(f : X \longrightarrow S\) be a good representative of a smoothing of a positive-dimensional ICIS \((Z, x)\). We have the following lemma,

\begin{lemma} \label{lemma:stalks}
With these assumptions, we have the isomorphism
\[
\shH^0_f \simeq \shH^0(f_* \shK^\bullet).
\]
\end{lemma}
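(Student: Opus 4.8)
The goal is to show $\shH^0_f \simeq \shH^0(f_*\shK^\bullet)$, where $\shG_f^\bullet \simeq \derR f_* \shK^\bullet$ by \eqref{eq:gm-derf}. The plan is to exploit the hypercohomology spectral sequence
\[
E_2^{p,q} = R^p f_* \shH^q(\shK^\bullet) \Longrightarrow \shH^{p+q}(\derR f_* \shK^\bullet) = \shH^{p+q}_f,
\]
together with the strong vanishing of the cohomology sheaves of $\shK^\bullet$ established in \cref{prop:cohomology-derham}. The key input is that $\shH^q(\shK^\bullet) = 0$ unless $q = -n$ or $q = 0$, so only the two rows $q = -n$ and $q = 0$ contribute to the $E_2$-page.

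First I would analyze the contributions to total degree zero. The abutment in degree $0$ receives terms from $E_\infty^{p,q}$ with $p + q = 0$, i.e. from $(p,q) = (0,0)$ and $(p,q) = (n,-n)$. The term at $(0,0)$ is $R^0 f_* \shH^0(\shK^\bullet) = f_* \shH^0(\shK^\bullet) = \shH^0(f_*\shK^\bullet)$, which is exactly the object we want. The potentially troublesome term is $E_2^{n,-n} = R^n f_* \shH^{-n}(\shK^\bullet)$. Here I would use \cref{cor:cohomology-derham}, which identifies $\shH^{-n}(\shK^\bullet) \simeq f^{-1}\Omod_S$, so $E_2^{n,-n} = R^n f_* f^{-1}\Omod_S$. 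The hard part is showing this term does not survive to contribute to $\shH^0_f$; I expect this to be the main obstacle.

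To handle $R^n f_* f^{-1}\Omod_S$, I would invoke the structure of the good representative of a smoothing. Since $\shH^0(\shK^\bullet)$ is concentrated at the point $x$ (by \cref{prop:cohomology-derham}), its higher direct images vanish, so the $q = 0$ row collapses to the single entry $E_2^{0,0}$; in particular there are no nonzero differentials into or out of $E_2^{0,0}$ from the $q=0$ row, and $E_2^{0,0} = E_\infty^{0,0}$ survives as a subquotient. For the $q = -n$ row, the relevant differential is $d_{n+1}\colon E_{n+1}^{0,0} \to E_{n+1}^{n+1,-n}$, but since $\shH^0(\shK^\bullet)$ is supported at $x$ there is no room for a nonzero target, so $E_2^{0,0}$ is an actual summand of $\shH^0_f$. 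It then remains to rule out a contribution from $E_\infty^{n,-n}$: I would argue that $R^n f_* f^{-1}\Omod_S \simeq (R^n f_* \shCC_X) \otimes_\shCC \Omod_S$ computes a piece of the cohomology of the Milnor fiber, and show that the surviving differential $d_{n+1}\colon E_{n+1}^{n,-n}\to\cdots$ or an incoming differential kills it, matching the De Rham identification $\DR_S \shG_f^\bullet \simeq \derR f_*\shCC_X[n]$ from \cref{sec:GM}.

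An alternative and cleaner route, which I would prefer if the spectral-sequence bookkeeping becomes delicate, is to use the two-step truncation of $\shK^\bullet$. Because $\shH^q(\shK^\bullet)$ is concentrated in degrees $-n$ and $0$, there is a distinguished triangle
\[
\tau_{\leq -n}\shK^\bullet \longrightarrow \shK^\bullet \longrightarrow \tau_{\geq 0}\shK^\bullet \xrightarrow{\ +1\ }
\]
in $D^b(f^{-1}\Omod_S)$, with $\tau_{\leq -n}\shK^\bullet \simeq \shH^{-n}(\shK^\bullet)[n] \simeq f^{-1}\Omod_S[n]$ and $\tau_{\geq 0}\shK^\bullet \simeq \shH^0(\shK^\bullet)$. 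Applying $\derR f_*$ and taking the long exact sequence of $\shH^\bullet$, the term $\shH^0$ sits between $\shH^0(\derR f_* f^{-1}\Omod_S[n]) = R^n f_* f^{-1}\Omod_S$ and $\shH^0(\derR f_* \shH^0(\shK^\bullet)) = f_*\shH^0(\shK^\bullet)$, reducing the whole problem to the single connecting-map computation. In either approach the essential technical point—and the step I expect to demand the most care—is verifying that the contribution of the constant-sheaf row $\shH^{-n}(\shK^\bullet) \simeq f^{-1}\Omod_S$ to $\shH^0_f$ vanishes, which ultimately reflects that the reduced cohomology carried by the Brieskorn lattice lives in $\shH^0(\shK^\bullet)$ and not in the lower row.
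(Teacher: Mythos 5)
Your strategy has a fundamental gap: the second hypercohomology spectral sequence \(E_2^{p,q} = R^p f_* \shH^q(\shK^\bullet)\) computes \(\shH^{p+q}(\derR f_* \shK^\bullet) = \shH^{p+q}_f\), but it never sees the \emph{naive} direct image complex \(f_*\shK^\bullet\), which is what the lemma is about. Your opening identification \(E_2^{0,0} = f_*\shH^0(\shK^\bullet) = \shH^0(f_*\shK^\bullet)\) is unjustified and in fact false: since \(\shH^0(\shK^\bullet)\) is a skyscraper at \(x\) (\cref{prop:cohomology-derham}), \(f_*\shH^0(\shK^\bullet)\) is a skyscraper at \(0 \in S\), whereas \(\shH^0(f_*\shK^\bullet)\) restricted to \(S^* = S \setminus \{0\}\) is the rank-\(\mu\) Gauss--Manin sheaf of the Milnor fibration, nonzero whenever \(\mu(Z,x) > 0\); the functor \(f_*\) is only left exact and does not commute with cokernels. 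Relatedly, your plan to ``kill'' \(E_2^{n,-n} = R^n f_* f^{-1}\Omod_S\) cannot succeed and would prove the wrong statement: with only the two rows \(q = -n, 0\) nonzero, every differential into or out of position \((n,-n)\) vanishes (the only candidate into it is \(d_{n+1}\) from \(E_{n+1}^{-1,0} = 0\), and outgoing differentials land in rows below \(-n\)), so this term survives to the limit page --- and it must, since it is precisely \(\shH^0_f|_{S^*}\). Only its \emph{stalk at \(0\)} vanishes, because the Milnor tube \(f^{-1}(W)\) is contractible, which is the correct sense in which the constant-sheaf row does not contribute at the origin. Your argument that \(d_{n+1}\colon E^{0,0}_{n+1} \to E^{n+1,-n}_{n+1}\) vanishes because the source is supported at a point is also not valid reasoning (a skyscraper can map nontrivially to any sheaf with sections supported at \(0\)); what actually holds is that the target itself vanishes, since \(R^{n+1} f_* \shCC_X = 0\) for a good representative.

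The missing ingredient --- and what the paper actually uses, following \cite[3.3]{SS85} --- is the \emph{first} spectral sequence of hypercohomology, \(E_1^{p,q} = R^q f_* \shK^p\), together with the \(f_*\)-acyclicity of the individual terms: for a good representative \(f\) is a Stein morphism, and each \(\shK^p = \shH^r_X(\Omega^{d+p}_U) \otimes_\CC \CC[\partial_t]\) is an inductive limit of coherent \(\Omod\)-modules, so \(R^q f_* \shK^p = 0\) for \(q > 0\) by Cartan's Theorem B. This degenerates the first spectral sequence and yields \(\shH^k(\derR f_* \shK^\bullet) \simeq \shH^k(f_* \shK^\bullet)\) for all \(k\), i.e., naive equals derived, which is the genuine content of the lemma; \cref{prop:cohomology-derham} and \cref{cor:cohomology-derham} then serve to identify the terms. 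Your alternative route via the truncation triangle inherits exactly the same defect: applying \(\derR f_*\) to \(\tau_{\leq -n}\shK^\bullet \to \shK^\bullet \to \tau_{\geq 0}\shK^\bullet\) again only computes the derived direct image and never connects to \(f_*\shK^\bullet\). No bookkeeping on the \(E_2\)-page of the second spectral sequence can substitute for the Stein/coherence input.
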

\begin{proof}
This result follows from combining the first spectral sequence of hypercohomology
\[
    'E_1^{p,q} = R^q f_* \shK^p \Longrightarrow \derR^{p+q} f_* \shK^\bullet,
\]
with \cref{prop:cohomology-derham} and \cref{cor:cohomology-derham} using the same argument as in \cite[3.3]{SS85}, which we will not repeat here.
\end{proof}

\begin{proof}[Proof of \cref{thm:main}]
As a consequence of \cref{lemma:stalks}, and the computations already seen in \cref{sec:cohomology}, we have the isomorphism of \(\Dmod_S\)-modules
\[
\shH_f^0 \simeq \shH^n(f_* \omega^\bullet_f) \otimes_{\CC} \CC[\partial_t],
\]
and the order-Ext filtration \(F_\bullet\) on \(\shH_f^0\), see \cref{sec:filtrations}, coincides with the filtration by the order of \(\partial_t\). Moreover, this filtration is generated at level zero. Notice that, from the inclusion \eqref{eq:inclusion}, the same is true for the Hodge filtration \(F^H_\bullet\).
\end{proof}

The coherence of \(F_k \shH^0_f\) as a \(\Omod_S\)-module would follow from the coherence of \(\shH^n(f_*\omega_f^\bullet)\). If \(f : X \longrightarrow S\) were proper, this would follow directly from Grauert's coherence theorem. This result could be exploited as in \cite[2.1]{Bri70} by using a globalization theorem for ICIS due to Looijenga \cite{Loo86}. However, it is more straightforward to use the coherence result from \cite[2.1.1]{BG80}. This result applies to the direct image by a deformation of an isolated singularity of any coherent complex of sheaves behaving like \(\Omega^\bullet_f\). Since the hypothesis \((P_1)-(P_4)\) in \textit{loc.~cit.} are certainly satisfied by \(\omega^\bullet_f\), one has that

\begin{lemma}
 The sheaves \(\shH^p(f_* \omega^\bullet_f)\) are coherent \(\Omod_S\)-modules and \(\shH^p(f_* \omega^\bullet_f)_0 \simeq H^p(\omega_{f,x}^\bullet)\).
\end{lemma}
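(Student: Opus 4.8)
The plan is to apply the coherence theorem of Brieskorn--Greuel \cite[2.1.1]{BG80} directly to the complex $\omega_f^\bullet$, and to identify the stalk of the resulting higher direct image with the local cohomology computed earlier. The key point is that \cite[2.1.1]{BG80} is an abstract coherence result applying to the direct image under a deformation $f : X \longrightarrow S$ of an isolated singularity, provided the complex of sheaves being pushed forward satisfies the four properties $(P_1)$--$(P_4)$ of \textit{loc.~cit.} Thus the proof reduces to verifying these hypotheses for $\omega_f^\bullet$ and then extracting the stalk identification.

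First I would recall precisely what $(P_1)$--$(P_4)$ demand and check each against the properties of $\omega_f^\bullet$ established in \cref{sec:relative-forms,sec:regular-icis}: the terms $\omega_f^p$ are coherent $\Omod_X$-modules (by the duality \eqref{eq:duality-relative} and the coherence of $\omega_f^n \simeq \omega_X^{n+1}\otimes(f^*\omega_S^1)^*$), the differentials are $f^{-1}\Omod_S$-linear, the complex is supported compatibly with the good representative so that $f$ is finite on the relevant loci, and the cohomology sheaves $\shH^p(\omega_f^\bullet)$ have support concentrated at the origin of the central fiber away from which $f$ is a smooth fibration with the $\omega_f^p$ restricting to ordinary relative K\"ahler forms. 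These are exactly the structural features recorded in \cref{prop:cohomology-relative} and \cref{lemma:torsion}, so the verification amounts to matching them line-by-line with the axioms and invoking that the singularity is isolated.

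Next I would deduce coherence of $\shH^p(f_*\omega_f^\bullet)$ as $\Omod_S$-modules from the conclusion of \cite[2.1.1]{BG80}. For the stalk identification $\shH^p(f_*\omega_f^\bullet)_0 \simeq H^p(\omega_{f,x}^\bullet)$, the natural approach is a base-change/comparison argument: coherence gives compatibility of the formation of $\shH^p(f_*\omega_f^\bullet)$ with passing to the stalk at $0 \in S$, and since $f$ is a good representative with the only singular point over $0$ located at $x$, the stalk of the pushforward at $0$ computes the cohomology of the complex of stalks $\omega_{f,x}^\bullet$. Concretely, one uses that taking cohomology of a complex of coherent sheaves commutes with the exact (localization) functor of passing to the stalk, together with the fact that the support of $\shH^p(\omega_f^\bullet)$ meets the central fiber only at $x$, as recorded after \cref{prop:cohomology-relative}.

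The main obstacle I anticipate is the careful verification that the hypotheses $(P_1)$--$(P_4)$ of \cite[2.1.1]{BG80} are genuinely met rather than merely plausible; in particular, the properties are phrased for the K\"ahler complex $\Omega_f^\bullet$, and one must argue that $\omega_f^\bullet$ inherits them despite differing from $\Omega_f^\bullet$ by torsion and cotorsion (compare \eqref{eq:short-exact-regular}). Since the excerpt asserts that ``the hypotheses $(P_1)$--$(P_4)$ in \textit{loc.~cit.} are certainly satisfied by $\omega_f^\bullet$,'' the intended proof is evidently to take this verification as routine and cite it; my plan is therefore to state the identification of the hypotheses explicitly but briefly, relying on the coherence and duality properties of $\omega_f^\bullet$ already proven, and then to quote the theorem. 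The stalk statement is then a formal consequence of coherence plus the isolatedness of the singularity, requiring no further hard input.
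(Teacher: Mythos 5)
Your proposal follows exactly the paper's own argument: the paper proves this lemma precisely by invoking the coherence theorem \cite[2.1.1]{BG80} for deformations of isolated singularities, asserting that the hypotheses \((P_1)\)--\((P_4)\) of \emph{loc.~cit.} are satisfied by \(\omega_f^\bullet\), and reading off both the coherence of \(\shH^p(f_*\omega_f^\bullet)\) and the stalk identification from that result. Your additional elaboration (matching the axioms against the properties of \(\omega_f^\bullet\) established earlier, and noting that the stalk statement is part of the cited theorem's content for a good representative of an isolated singularity) is more detail than the paper itself supplies, but it is the same route.
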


Thanks to this, we finally obtain that the filtration \(F_\bullet\) on the Gauss-Manin system \(\shH^0_f\) is a good filtration generated at level zero and hence, \(\shH^0_f\) is a coherent \(\Dmod_S\)-module. Consequently, since \(S\) is one dimensional, \(\shH^0_f\) is moreover holonomic, see \cite[5.1.20]{HTT08}.

\jump

Recall that a \(\Dmod_S\)-module \(\shM\) is regular if it is finite over \(\Dmod_S\) and the localization \(\shM[t^{-1}]\) by \(t\) is a regular meromorphic connection. The regularity of \(\shH^0_f\) follows directly from the regularity of the Gauss-Manin connection of an ICIS due to Greuel \cite[3.6]{Gre75}. Indeed, the coherent \(\Omod_S\)-module \(\shH^n (\omega_f^\bullet)\) is a lattice of the \(\Dmod_S\)-module \(\shH^0_f\). Thus, localizing at \(t = 0\) we have the isomorphism
\begin{equation} \label{eq:localization}
\shH^0_f \otimes_{\Omod_S} \Omod_S[t^{-1}] \simeq \shH^n(f_* \omega_f^\bullet) \otimes_{\Omod_S} \Omod_S[t^{-1}]
\end{equation}
as \(\Dmod_S\)-modules. Since \(\shH^0(f_* \omega_f^\bullet)\) coincides with the lattice \(\shH'''\) from \cite[\S 2.2]{Gre75} the regularity of \eqref{eq:localization} follows from \textit{loc.~cit.} In summary

\begin{lemma} \label{lemma:regular-holonomic}
\(\shH^0_f\) is a regular holonomic \(\Dmod_S\)-module.
\end{lemma}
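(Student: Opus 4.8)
The plan is to separate the statement into holonomicity and regularity and to obtain each from the structural results already in place. For holonomicity, I would start from \cref{thm:main}, which gives $\shH^0_f \simeq \shH^n(f_*\omega_f^\bullet)\otimes_\CC\CC[\partial_t]$ and identifies the order-Ext filtration $F_\bullet$ with the filtration by order of $\partial_t$, generated at level zero. Combining this with the coherence of $\shH^n(f_*\omega_f^\bullet)$ as an $\Omod_S$-module established in the preceding coherence lemma (a consequence of \cite[2.1.1]{BG80}), the filtration $F_\bullet$ is a good filtration on $\shH^0_f$ generated at level zero, so $\shH^0_f$ is a coherent $\Dmod_S$-module. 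Because $\dim S = 1$, coherence forces holonomicity, by \cite[5.1.20]{HTT08}.

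For regularity, I would use the definition recalled in the text: a finite $\Dmod_S$-module $\shM$ is regular precisely when its localization $\shM[t^{-1}]$ is a regular meromorphic connection. Here $\shH^n(f_*\omega_f^\bullet)$ is a coherent $\Omod_S$-lattice inside $\shH^0_f$, and inverting $t$ makes the $\partial_t$-action invertible and produces an isomorphism
\[
\shH^0_f\otimes_{\Omod_S}\Omod_S[t^{-1}]\simeq \shH^n(f_*\omega_f^\bullet)\otimes_{\Omod_S}\Omod_S[t^{-1}]
\]
of $\Dmod_S$-modules. After localization the right-hand side is identified with Greuel's lattice $\shH'''$ from \cite[\S 2.2]{Gre75}, whose Gauss-Manin connection is regular by \cite[3.6]{Gre75}. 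Transporting that regularity across the isomorphism shows $\shH^0_f[t^{-1}]$ is a regular meromorphic connection, and together with the holonomicity established above this gives that $\shH^0_f$ is regular holonomic.

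I expect the regularity half to be the main obstacle, not the holonomicity half. The delicate points are, first, that the displayed isomorphism must be verified to be $\Dmod_S$-linear (and not merely $\Omod_S$-linear), so that Greuel's connection is genuinely transported; and second, that one correctly matches the localized lattice with $\shH'''$, keeping track of the fact that inverting $t$ replaces the denominator $\dd\Omega^{n-1}_{f,x}$ appearing in $\shH^n(f_*\omega_f^\bullet)$ by $\dd\omega^{n-1}_{f,x}$, so that Greuel's regularity statement applies verbatim. By contrast, once the good filtration generated at level zero is in hand, holonomicity is essentially formal.
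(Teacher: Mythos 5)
Your proof is correct and takes essentially the same route as the paper: holonomicity from the good filtration generated at level zero (via \cref{thm:main}, the coherence of \(\shH^n(f_*\omega_f^\bullet)\) from \cite[2.1.1]{BG80}, and \cite[5.1.20]{HTT08} since \(\dim S = 1\)), and regularity by localizing at \(t\) and transporting Greuel's regularity of the Gauss-Manin connection \cite[3.6]{Gre75} through the lattice identification with \(\shH'''\). The two delicate points you flag (the \(\Dmod_S\)-linearity of the localization isomorphism and the matching of denominators with Greuel's lattice) are exactly what the paper's terser proof implicitly relies on, so your account is, if anything, slightly more careful.
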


\subsection{Period integrals and \texorpdfstring{\(V\)}{V}-filtration} \label{sec:vfilt}

In the same settings as in previous sections, we have and identification between the restriction of \(\shH^p(f_* \omega_f^\bullet)\) to \(S^*\) and the sections of the cohomological Milnor fibration \(H^p(X_t, \CC), t \in S^*\). This identification can be realized by the non-degenerate pairing
\begin{equation} \label{eq:pairing}
    \restr{\shH^p(f_* \omega^\bullet_{f})}{X_t} \times H_p(X_t, \CC) \longrightarrow \CC
\end{equation}
given by integrating any representative of a class in \(\shH^n(f_* \omega^\bullet_{f})\) along a cycle in \(H_p(X_t, \CC)\). Since this pairing is trivial for \(p < n\), we will focus only on the case \(p = n\). For \(\gamma(t) \in H_n(X_t, \CC)\) a vanishing cycle, i.e. \(\gamma(t) \rightarrow 0\) as \(t \rightarrow 0\), and \(\omega \) any section of the dualizing sheaf \(\omega_{X}^{n+1}\), define the period integrals on the Milnor fiber by
\begin{equation} \label{eq:integrals}
I(t) = \int_{\gamma(t)} \frac{\omega}{\dd f},
\end{equation}
which are well-defined holomorphic functions on the universal cover of \(S^*\). Fixing an embedding of \(X\) in a germ of a manifold \(U\), and letting \(F_1, \dots, F_r \) be generators for \(X\), and \(\restr{F}{X} = f\), the form \(\omega/\dd f\) is nothing but the Gelfand-Leray form \(\restr{\eta / \dd F_1 \wedge \cdots \wedge \dd F_r \wedge \dd F}{X_t}\), for some section \(\eta \) of \(\Omega^{n+r+1}_{U}\). 

\jump

If we assume that \(\gamma(t)\) is a locally constant section of the fibration \(H_p(X_t, \CC), t \in S^*,\) for the (homological) Gauss-Manin connection, Leray's residue theorem \cite[\S III]{Pham11} imply
\[
    \frac{\dd}{\dd t} \int_{\gamma(t)} \frac{\omega}{\dd f} = \frac{\dd}{\dd t} \left(\frac{1}{2 \pi \imath} \int_{\delta \gamma(t)} \frac{\omega}{f - t} \right) = \frac{1}{2\pi \imath} \int_{\delta \gamma(t)} \frac{\omega}{(f-t)^2} = \int_{\gamma(t)} \partial_t\Big(\frac{\omega}{\dd f}\Big),
\]
where \(\delta \gamma(t)\) denotes the image of \(\gamma(t)\) under Leray's coboundary operator. The period integrals are therefore solutions to the (homological) Gauss-Manin connection. By the duality given by the pairing \eqref{eq:pairing}, the growth of solutions in the Brieskorn lattice \(\shH^n(f_*\omega^\bullet_f)\) can be controlled by the rate of growth of \(I(t)\) near \(t = 0\). 

\jump

By \cref{lemma:regular-holonomic}, \(\shH^0_f\) has regular singularities and the functions \(I(t)\) must be in the Nilsson class. Together with the Monodromy theorem, see for instance \cite[5.14]{Loo84}, this yields the asymptotic expansion
\begin{equation} \label{eq:expansion}
    I(t) = \int_{\gamma(t)} \frac{\omega}{\dd f} = \sum_{\alpha \in \Lambda} \sum_{0 \leq k \leq n} a_{\alpha, k}(\omega) t^\alpha (\log t)^k,
\end{equation}
where \(\Lambda = \{ \alpha \in \QQ\ |\ e^{-2 \pi i \alpha}\ \textnormal{eigenvalue monodromy}\} \) and \(a_{\alpha,k}(\omega) = 0\) if \(\alpha \ll 0\). If \(X\) is smooth, Malgrange \cite[4.5]{Mal74a} showed that \(a_{\alpha, k}(\omega) = 0 \) for \(\alpha \leq -1\).

\jump

The above motivates the introduction of the \(V\)-filtration on the \(\Dmod_{S}\)-module \(\shH^0_{f}\) along \( t = 0\), or more generally, on any regular holonomic \(\Dmod\)-module \(\shM\) on a complex manifold \(X\) along a submanifold \(Y\) of codimension 1 given locally by \(t = 0\). Following \cite[\S 3.1]{Sai88}, let
\[
V^m \Dmod_X = \{ P \in \Dmod_{X}\ |\ P \cdot (t)^j \subseteq (t)^{j + m},\ \textnormal{for all}\ j \in \ZZ \},
\]
with the convention that \((t)^j = \Omod_X\), if \(j \leq 0\). Then the \(V\)-filtration of Malgrange-Kashiwara on \(\shM\) along \(Y\) is a rational, exhaustive, decreasing, discrete, and left continuous filtration by coherent \(V^0 \Dmod_X\)-modules \(V^\alpha \shM\) such that for every \(\alpha \in \QQ\),
\begin{enumerate}
    \item We have the inclusion \(t \cdot V^\alpha \shM \subseteq V^{\alpha + 1} \shM\).
    \item We have the inclusion \(\partial_t \cdot V^\alpha \shM \subseteq V^{\alpha-1} \shM \).
    \item If \(V^{>\alpha} \shM = \bigcup_{\alpha' > \alpha} V^{\alpha'} \shM\), then \(t \partial_t - \alpha\) acts nilpotently on
    \[
        \Gr_V^\alpha \shM = V^\alpha \shM / V^{>\alpha} \shM.
    \]
\end{enumerate}
From these properties, it can be easily deduced that the maps
\[
 \Gr_V^\alpha \shM \xrightarrow{\,\partial_t\cdot\,} \Gr_V^{\alpha - 1} \shM \xrightarrow{\,t\cdot\,} \Gr_V^\alpha \shM
\]
are isomorphism of \(\Dmod_Y\)-modules for \(\alpha \neq 0\). This implies that the morphism \(t : V^\alpha \shM \longrightarrow V^{\alpha + 1} \shM \) is bijective for \(\alpha > -1\). If moreover one has that \(t\) is invertible on \(\shM\), then \(t \cdot V^\alpha \shM = V^{\alpha + 1} \shM\), for all \(\alpha \in \QQ\).

\jump

The following lemma will be useful later and follows directly from the classification of regular holonomic \(\Dmod\)-modules in one variable, see for instance \cite[19]{BM84} or \cite[\S 1]{Sai89}. See also \cite[7.1]{Hert01}.

\begin{lemma} \label{lemma:microlocal}
    If \(X\) is one dimensional, \(\partial_t : V^{\alpha} \shM \longrightarrow V^{\alpha -1 } \shM \) is bijective and \(V^{\alpha-1} \shM\) is a \(\CC\{\{\partial_t^{-1}\}\}\)-module for \(\alpha > 0\). In particular, the morphism \(\partial_t : V^{>0} \shM \longrightarrow V^{>-1} \shM\) is bijective and \(V^{>-1}\) is a \(\CC\{\{\partial_t^{-1}\}\}\)-module.
\end{lemma}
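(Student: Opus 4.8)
The plan is to deduce both assertions from the explicit classification of regular holonomic \(\Dmod\)-modules in one variable, following \cite[19]{BM84} and \cite[\S 1]{Sai89}. First I would decompose \(\shM\) into a finite direct sum of indecomposable regular holonomic \(\Dmod\)-modules. Because the \(V\)-filtration, the action of \(\partial_t\), and the putative \(\CC\{\{\partial_t^{-1}\}\}\)-module structure are all compatible with finite direct sums, it suffices to verify the statement on each indecomposable block, and these blocks are completely explicit: they are either lattices of regular meromorphic connections or modules supported at the origin. Observe that the latter satisfy \(V^\gamma = 0\) for \(\gamma > -1\); hence for any index \(\alpha - 1 > -1\), that is for \(\alpha > 0\), only the connection-type blocks contribute to \(V^{\alpha-1}\shM\), which already makes it plausible that \(\partial_t\) is invertible there.

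For the bijectivity of \(\partial_t \colon V^\alpha \shM \to V^{\alpha-1}\shM\), I would argue on graded pieces. The excerpt records that \(\partial_t \colon \Gr_V^\beta \shM \to \Gr_V^{\beta-1}\shM\) is an isomorphism whenever \(\beta \neq 0\). For \(\alpha > 0\), every graded piece of \(V^\alpha \shM\) sits in degree \(\gamma \geq \alpha > 0\), so \(\partial_t\) induces an isomorphism on each \(\Gr_V^\gamma\). Since the \(V\)-filtration on each block is discrete, exhaustive, separated, and complete, and since each \(\Gr_V^\gamma\) is finite-dimensional over \(\CC\), a filtered morphism inducing isomorphisms on all graded pieces is itself an isomorphism; this yields the bijectivity of \(\partial_t\) on \(V^\alpha \shM\) for \(\alpha > 0\).

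For the microlocal structure, note that the inverse \(\partial_t^{-1}\) now maps \(V^\gamma \shM\) to \(V^{\gamma+1}\shM\), that is, it raises the \(V\)-order by one. Separatedness \(\bigcap_\gamma V^\gamma \shM = 0\) together with completeness makes \(\partial_t^{-1}\) topologically nilpotent: for \(m \in V^{\alpha-1}\shM\) and any series \(\sum_{k \geq 0} a_k \partial_t^{-k} \in \CC\{\{\partial_t^{-1}\}\}\), the partial sums are Cauchy for the \(V\)-adic topology, because their tails lie in \(V^{\alpha-1+N}\shM\) as \(N \to \infty\), and hence converge in \(V^{\alpha-1}\shM\). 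This defines the \(\CC\{\{\partial_t^{-1}\}\}\)-action. The final ``in particular'' follows by passing to the unions \(V^{>0}\shM = \bigcup_{\alpha > 0} V^\alpha \shM\) and \(V^{>-1}\shM = \bigcup_{\alpha > 0} V^{\alpha-1}\shM\), over which the bijectivity and the module structure are compatible.

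The routine part is the bijectivity, which is purely formal once completeness and separatedness of the \(V\)-filtration are in hand. The main obstacle I anticipate is the convergence needed for the microdifferential module structure: one must match the Gevrey-type growth condition defining \(\CC\{\{\partial_t^{-1}\}\}\) with the topology induced by the \(V\)-filtration. This is exactly the point at which the explicit classification is indispensable, since the finite-dimensionality of the graded pieces and the completeness of the elementary blocks guarantee that such series converge and act on \(V^{\alpha-1}\shM\); see also \cite[7.1]{Hert01}.
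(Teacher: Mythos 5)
Your overall plan---reduce everything to the classification of regular holonomic \(\Dmod\)-modules in one variable---is exactly the route the paper takes (it proves the lemma by citation to \cite[19]{BM84}, \cite[\S 1]{Sai89}, \cite[7.1]{Hert01}), but the details you supply contain a genuine gap: both of your central steps rest on the claim that the \(V\)-filtration is \emph{complete}, and this is false in the convergent category. On a connection-type block one has \(V^{\alpha+n}\shM = t^n V^{\alpha}\shM\) for \(\alpha > -1\), so the \(V\)-adic topology on \(V^\alpha\shM\) is the \(t\)-adic topology on a finite \(\CC\{t\}\)-module, and \(\CC\{t\}\) is not \(t\)-adically complete: \(\varprojlim \CC\{t\}/(t^n) = \CC[[t]]\). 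Consequently your successive-approximation argument for the surjectivity of \(\partial_t : V^\alpha\shM \to V^{\alpha-1}\shM\) only produces a \emph{formal} solution; that this formal solution converges is precisely the content of regularity, and is what one checks by hand on the elementary models, where sections are spanned by \(t^{\beta}(\log t)^j\) and \(\partial_t\) is inverted by explicit integration. (Your injectivity argument is sound, since it uses only separatedness, which does hold: by Krull intersection on lattices for the connection part, and trivially on the punctual part.)

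The false completeness premise is even more damaging for the microlocal structure. If arbitrary series \(\sum_{k\geq 0} a_k \partial_t^{-k}\) acted by \(V\)-adic Cauchy convergence, then the full formal ring \(\CC[[\partial_t^{-1}]]\) would act, and the growth condition defining \(\CC\{\{\partial_t^{-1}\}\}\) would be superfluous---so your argument proves too much and cannot be correct as stated. The missing content is an estimate on elementary sections: up to logarithmic corrections, \(\partial_t^{-k} t^{\beta} = \bigl(\Gamma(\beta+1)/\Gamma(\beta+k+1)\bigr) t^{\beta+k}\), i.e.\ \(k\)-fold integration contributes factors of size \(1/k!\), so \(\sum_k a_k \partial_t^{-k} m\) converges precisely when \(\sum_k a_k t^k/k! \in \CC\{t\}\), which is the defining Gevrey condition of \(\CC\{\{\partial_t^{-1}\}\}\); this computation also shows why \(\alpha > 0\) is needed (it keeps the degrees away from \(\Gr_V^0\), where \(t\partial_t\) is not invertible and the integration constants obstruct both invertibility and the estimate). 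Finally, a smaller inaccuracy: indecomposable regular holonomic modules in one variable are \emph{not} all lattices of meromorphic connections or punctual modules---nontrivial extensions mixing the two occur, as the quiver description by \((\Gr_V^0, \Gr_V^{-1}, \mathrm{can}, \mathrm{var})\) makes clear---though this is harmless here, since the lemma only concerns \(V^{\alpha}\) with \(\alpha > -1\), where the punctual contributions vanish; the clean way to organize the verification is on the elementary models or directly in the quiver picture, as in \cite[7.1]{Hert01}.
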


Here \(\CC\{\{\partial_t^{-1}\}\}\) denotes the ring of microlocal differential operators with constant coefficients, namely
\[
    \CC\{\{\partial_t^{-1}\}\} := \bigg\{ \sum_{k \geq 0} a_k \partial_t^{-1} \in \CC[[\partial^{-1}_t]]\, \bigg|\, \sum_{k\geq 0} \frac{a_k}{k!} t^k \in \CC\{t\} \bigg\}.
\]

In terms of the \(V\)-filtration, Malgrange's bound on the growth of \(I(t)\) when \(X\) is smooth can be expressed as
\begin{equation} \label{eq:growth}
    \shH^n(f_* \omega_{f}^\bullet) \subset V^{>-1} \shH^0_f.
\end{equation}

\begin{remark} \label{rmk:malgrange}
    The bound \eqref{eq:growth} on the growth rate of \(I(t)\) when \(X\) is smooth can be generalized when the singularities of \(Z\) are not isolated and the limiting cycle satisfy \(\gamma(0) \subset Z_{\textnormal{sing}}\). Using an integration by parts argument as in \cite[3.2]{Mal74b}, one can compare the leading exponents of the expansions \eqref{eq:expansion} with the roots of the Bernstein-Sato polynomial \(b_{f,x}(-s - 1)\). The roots of \(b_{f,x}(s)\) are always strictly negative rational numbers \cite[5.2]{Kas76}, so \eqref{eq:growth} holds in this more general setting.
\end{remark}

\subsection{Rational and Du Bois singularities} \label{sec:rational}

Let \(X\) be an irreducible analytic space of dimension \(n + 1\). A log resolution \(\pi : \widetilde{X} \longrightarrow X\) is a proper birational morphism, with \(\widetilde{X}\) smooth, and such that the exceptional locus \(E\) of \(\pi\) is a simple normal crossing divisor. An analytic space \(X\) has rational singularities if for some (hence any) log resolution, the canonical morphism
\[
\Omod_X \longrightarrow \derR \pi_* \Omod_{\widetilde{X}}
\]
is a quasi-isomorphism, that is, \(R^p \pi_* \Omod_{\widetilde{X}} = 0\), for \(p > 0\). In particular, \(X\) is normal. By Grauert-Riemenschneider vanishing \cite{GR70}, rational also implies Cohen-Macaulay. This gives the following equivalent characterization: \(X\) has rational singularities if the trace morphism
\begin{equation} \label{eq:trace}
    \pi_* \omega_{\widetilde{X}}^{n+1} \longrightarrow \omega^{n+1}_X
\end{equation}
is an isomorphism, see \cite[\S I.3]{Kem73}.

\jump

A smoothing \(f : (X, x) \longrightarrow (\CC, 0)\) of an ICIS will be called rational if \((X, x)\) has at most a rational singularity at \(x \in X\). It follows from \cite[2]{Elk78} that any smoothing of a rational singularity is a rational smoothing. The converse is not true; any hypersurface singularity has a rational smoothing \(f : (\CC^{n+1}, x) \longrightarrow (\CC, 0)\) given by its defining equation.

\begin{proposition}[\ref{prop:microlocal}] 
    If \(f : (X, x) \longrightarrow (\CC, 0)\) is a rational smoothing of an ICIS \((Z, x)\), then the Brieskorn lattice \(\shH^n(f_* \omega_f^\bullet)\) has structure of \(\CC\{\{\partial_t^{-1}\}\}\)-module.
\end{proposition}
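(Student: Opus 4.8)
The plan is to reduce everything to the single inclusion $\shH^n(f_*\omega_f^\bullet) \subseteq V^{>-1}\shH^0_f$. Write $H'' := \shH^n(f_*\omega_f^\bullet) = F_0\shH^0_f$. By \cref{lemma:regular-holonomic} the Gauss-Manin system $\shH^0_f$ is regular holonomic, so \cref{lemma:microlocal} applies: $V^{>-1}\shH^0_f$ is a $\CC\{\{\partial_t^{-1}\}\}$-module and $\partial_t \colon V^{>0}\shH^0_f \to V^{>-1}\shH^0_f$ is bijective, whence $V^{>-1}\shH^0_f = \partial_t V^{>0}\shH^0_f$. Assuming $H'' \subseteq V^{>-1}\shH^0_f$, the operator $\partial_t^{-1}$ is then defined on all of $H''$. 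To see that it preserves $H''$ I would use the complex $\shK^\bullet$: since $\partial_t$ is injective on $\shH^0_f$, preimages are unique, and the vanishing of $\underline{\dd}(\eta\otimes 1)$ in cohomology, for $\eta$ a local section of $\shH^r_X(\Omega_U^{n+r})$, gives the relation $\dd\eta\otimes 1 = (\dd F\wedge\eta)\otimes\partial_t$ in $\shH^0_f$; hence $\partial_t^{-1}(\dd\eta\otimes 1) = (\dd F\wedge\eta)\otimes 1$. As $\dd F\wedge\eta$ is a section of $\shExt^r_{\Omod_U}(\Omod_X,\Omega_U^{r+1+n}) \simeq \omega_f^n$, its class lies in $H''$; and one checks, as in the proof of \cref{prop:cohomology-derham}, that every class in $H'' \cap \partial_t\shH^0_f$ is of this shape, so the formula computes $\partial_t^{-1}$ on all of $H''$ and yields $\partial_t^{-1}H'' \subseteq H''$. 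With the convergence supplied by \cref{lemma:microlocal}, this makes $H''$ a $\CC\{\{\partial_t^{-1}\}\}$-module.

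It remains to establish $\shH^n(f_*\omega_f^\bullet) \subseteq V^{>-1}\shH^0_f$, the exact analogue of \eqref{eq:growth} with smoothness replaced by rationality. I would prove it through period integrals, as in \cref{sec:vfilt}: by the non-degeneracy of \eqref{eq:pairing} it suffices to show that for every section $\omega$ of the dualizing sheaf $\omega_X^{n+1}$ the asymptotic expansion \eqref{eq:expansion} of $I(t) = \int_{\gamma(t)}\omega/\dd f$ carries no exponent $\alpha \leq -1$, i.e.\ $a_{\alpha,k}(\omega) = 0$ for $\alpha \leq -1$. The mechanism is to reduce to Malgrange's bound in the smooth case \cite{Mal74a} by resolving: choose a log resolution $\pi \colon \widetilde{X} \to X$; because $f$ is a rational smoothing, the trace morphism \eqref{eq:trace} is an isomorphism, so $\omega$ lifts to a \emph{holomorphic} $(n+1)$-form $\widetilde\omega$ on the smooth space $\widetilde{X}$, and $I(t) = \int_{\widetilde\gamma(t)}\widetilde\omega/\dd\widetilde f$ for $\widetilde f = f\circ\pi$ and $\widetilde\gamma(t)$ the lift of the vanishing cycle. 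Since $\pi$ restricts to a biholomorphism over $S^*$, the periods, and hence their exponents, are unchanged, while on the smooth total space $\widetilde{X}$ a holomorphic top form produces only exponents $\alpha > -1$.

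The main obstacle is precisely this last reduction: $\widetilde f$ is no longer a smoothing in our sense, as its critical locus over $0$ contains the exceptional divisor rather than a single point, so Malgrange's theorem cannot be invoked verbatim and must be replaced either by a fibrewise argument after further resolving the map $\widetilde f$, or by the Bernstein--Sato estimate indicated in \cref{rmk:malgrange}, exploiting that the roots of the relevant $b$-function are strictly negative to force $\alpha > -1$. A second route I would develop in parallel avoids periods altogether: rational implies Du Bois, so \cref{prop:hodge} already gives $F^H_0\shH^0_f \simeq \shH^n(f_*\omega_f^\bullet) = H''$, and one is reduced to the compatibility of the Hodge and $V$-filtrations on the mixed Hodge module $\shH^0_f$. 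Here the delicate point is to verify that it is rationality, and not merely the Du Bois property, that upgrades the inclusion to the \emph{strict} bound $V^{>-1}$, which is exactly what excludes logarithmic terms and allows $\partial_t^{-1}$ to act.
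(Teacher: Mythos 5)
Your main line is exactly the paper's proof: reduce everything to the inclusion \(\shH^n(f_*\omega_f^\bullet)\subseteq V^{>-1}\shH^0_f\), prove that inclusion by lifting the period integrals \eqref{eq:integrals} through a log resolution \(\pi\), using rationality via the trace isomorphism \eqref{eq:trace} to see that \(\pi^*\omega\) is a \emph{holomorphic} top form on the smooth space \(\widetilde{X}\), and then invoke \cref{lemma:microlocal}. The obstacle you flag at the end — that \(\tilde{f}=f\circ\pi\) is no longer a smoothing, its critical locus over \(0\) containing the exceptional divisor — is resolved in the paper by precisely the second of your two options: \cref{rmk:malgrange} extends Malgrange's bound \eqref{eq:growth} to the case where the central fibre has non-isolated singularities and the limiting cycle satisfies \(\gamma(0)\subset Z_{\textnormal{sing}}\), via the integration by parts argument of \cite{Mal74b} together with Kashiwara's strict negativity of the roots of the Bernstein--Sato polynomial \cite{Kas76}. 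Since \(\widetilde{\gamma}(0)\) lies in \(\widetilde{Z}_{\textnormal{sing}}=E\), this applies verbatim upstairs; no further fibrewise resolution is needed, and your parallel Du Bois/Hodge-module route is not the one taken.

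The genuine problem is in your first paragraph, on stability. Your claim that \enquote{one checks, as in the proof of \cref{prop:cohomology-derham}, that every class in \(H''\cap\partial_t\shH^0_f\) is of this shape}, and hence that \(\partial_t^{-1}H''\subseteq H''\), cannot be right as stated: such a verification would be purely formal in \(\shK^\bullet\) and would use nothing about rationality, so it would show that \(F_0\shH^0(\shK^\bullet)\simeq\shH^n(\omega_f^\bullet)\) is stable under \(\partial_t^{-1}\) for \emph{every} smoothing — contradicting the paper's explicit remark immediately after \cref{prop:cohomology-derham} that, in contrast with the hypersurface case, \(F_0\) is not stable under \(\partial_t^{-1}\) in general, which is the entire point of \cref{sec:rational}. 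Concretely, given a class \([\omega\otimes 1]\in F_0\), the Poincar\'e lemma for \(\shH^r_X(\Omega_U^{d+\bullet})\) does produce \(\eta\) with \(\dd\eta=\omega\), but with no control on the order of \(\eta\) along \(X\); your identification of \(\dd F\wedge\eta\) with a section of \(\omega^n_f\) silently assumes \(\eta\) has order zero, and without that, \([(\dd F\wedge\eta)\otimes 1]\) a priori lies only in some \(F_m\) with \(m\gg 0\), not in \(F_0\). The paper instead obtains the action of \(\partial_t^{-1}\) on the Brieskorn lattice from the inclusion \(\shH^n(f_*\omega^\bullet_f)\subset V^{>-1}\shH^0_f\) together with \cref{lemma:microlocal}: it is the \(V\)-filtration bound — and therefore rationality — that does this work, not a degree-reduction computation in \(\shK^\bullet\). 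If you repair that paragraph by routing the stability through the \(V\)-filtration as well, your argument coincides with the paper's.
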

\begin{proof}
Fix any log resolution \(\pi : (\widetilde{X}, E) \longrightarrow (X, x)\) of \((X, x)\), and let \(\tilde{f} = f \circ \pi\). Denote \(\widetilde{Z}\) the total transform of \(Z\), now with \(\widetilde{Z}_{\textnormal{sing}} = E\). Considering the period integrals \(I(t)\) from \eqref{eq:integrals}, one has
\[
    I(t) = \int_{\gamma(t)} \frac{\omega}{\dd f} = \int_{\pi_*^{-1}\gamma(t)} \frac{\pi^* \omega}{\dd \tilde{f}},
\]
since \(\pi\) being an isomorphism on \(X \setminus \{x\}\) implies \(\pi^* ({\omega / \dd f}{}) = {\pi^*\omega / \dd \tilde{f}}{}\) on the fibers \(X_t \simeq \widetilde{X}_t, t \in S^*\). If we denote \( \widetilde{\gamma}(t) = \pi_*^{-1} \gamma(t)\), the limiting cycle \(\widetilde{\gamma}(0)\) lives in \(\widetilde{Z}_{\textnormal{sing}}\). Using that \((X, x)\) has at most rational singularities, \eqref{eq:trace} gives that \(\pi^* \omega\) is always a section of \(\omega_{\widetilde{X}}^{n+1}\), and because \(\widetilde{X}\) is smooth, \(\omega_{\widetilde{X}}^{n+1} \simeq \Omega^{n+1}_{\widetilde{X}}\). Therefore, we can deduce that 
\[
    \shH^n(f_* \omega^\bullet_f) \subset V^{>-1} \shH^0_f,
\]
from Malgrange's bound \eqref{eq:growth} in the smooth case, see \cref{rmk:malgrange}. From \cref{lemma:microlocal} we have the morphism \(\partial_t^{-1} : V^{>-1} \shH^0_f \longrightarrow V^{>0} \shH^0_f\), which induces the action of \(\partial_t^{-1}\) on the Brieskorn lattice \(\shH^n(f_* \omega_f^\bullet)\).
\end{proof}

Du Bois singularities can be considered as a generalization of rational singularities. The general definition of Du Bois singularities due to Steenbrink \cite{Steen80} involves the Du Bois complex \(\underline{\Omega}_X^\bullet\) of \(X\), see \cite{DuBo81}. Alternatively, if \(X\) is normal and Cohen-Macaulay one has the following characterization from \cite[1.1]{KSS10}: \(X\) has Du Bois singularities if the morphism
\[
    \pi_* \omega^{n+1}_{\widetilde{X}}(E) \longrightarrow \omega^{n+1}_{X}
\]
is an isomorphism, \emph{cf.} \eqref{eq:trace}. Under these assumptions, it is straightforward that rational singularities are Du Bois. This implication holds in full generality, see \cite{Kov99}.

\jump

Recall the inclusion between the Hodge and Ext filtration on local cohomology from \eqref{eq:inclusion}. It was shown in \cite[C]{MP22} that for a pure dimensional Cohen-Macaulay singularity, 
\[F_0^H \shH^r_X(\Omod_U) = E_0 \shH^r_X (\Omod_U),\]
if and only if \(X\) is Du Bois. As before, we can call a smoothing \(f : (X,x) \longrightarrow (\CC,0)\) Du Bois if \((X, x)\) has at most a Du Bois singularity at \(x \in X\). As with the case of rational singularities, Du Bois singularities also deform \cite{KS16}. Hence,

\begin{proposition}[\ref{prop:hodge}]
 If \(f : (X, x) \longrightarrow (\CC, 0)\) is a Du Bois smoothing of a positive-dimensional ICIS \((Z, x)\), then 
 \[
     F^H_0 \shH^0_f  \simeq \shH^n(f_* \omega^\bullet_f ).
 \]
\end{proposition}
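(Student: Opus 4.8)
The plan is to upgrade the general inclusion of filtrations to an equality at level zero by means of the level-zero characterization of Du Bois singularities of Mustaţă and Popa. As recorded after \cref{thm:main}, one always has $F^H_0 \shH^0_f \subseteq F_0 \shH^0_f$, and \cref{thm:main} identifies $F_0 \shH^0_f \simeq \shH^n(f_* \omega_f^\bullet)$; hence it suffices to prove the reverse inclusion $\shH^n(f_* \omega_f^\bullet) \subseteq F^H_0 \shH^0_f$. Since $f$ is a smoothing, $X$ is a complete intersection, so it is pure-dimensional and Cohen-Macaulay, and because $n > 0$ it is normal (see \cref{sec:regular-icis}); the Du Bois hypothesis says $(X,x)$ is Du Bois. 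Under precisely these hypotheses, \cite[C]{MP22} promotes the general inclusion \eqref{eq:inclusion} to the equality
\[
  F^H_0 \shH^r_X(\Omod_U) = E_0 \shH^r_X(\Omod_U) = O_0 \shH^r_X(\Omod_U).
\]

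For the reverse inclusion I would exhibit order-zero representatives of the Brieskorn lattice classes. By \cref{prop:cohomology-relative} every class in $\shH^n(f_* \omega_f^\bullet)$ is represented by a section of $\omega_f^n$, and by the relative exact sequence \eqref{eq:relative-exact-sequence} one has $\omega_f^n \simeq \shExt^r_{\Omod_U}(\Omod_X, \Omega^d_U)$ via the fundamental class $\bar{c}_f$, where $d = \dim U$. Viewed inside $\shH^r_X(\Omega^d_U) \simeq \shH^r_X(\Omod_U) \otimes_{\Omod_U} \Omega^d_U$, such a section is annihilated by $\shI_X$ and therefore lies in $O_0 \shH^r_X(\Omod_U) \otimes_{\Omod_U} \Omega^d_U$; concretely, if $X$ is cut out by $x_1, \dots, x_r$, it has the form $g\,[1/(x_1\cdots x_r)] \otimes \mathrm{vol}$. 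This is exactly a $\partial_t$-degree-zero cocycle of $\shK^0$ computing the relevant cohomology class, as in the proof of \cref{prop:cohomology-derham}. Applying the Du Bois equality above, $O_0 = F^H_0 \subseteq F^H_d$, so the representative sits in the $\partial_t$-degree-zero summand $F^H_d \shH^r_X(\Omod_U) \otimes_{\Omod_U} \Omega^d_U$ of $F^H_0 \shK^0$. Passing from $\shK^\bullet$ to $\shH^0_f$ through \cref{lemma:stalks} (using that $\shH^0(\shK^\bullet)$ is concentrated at $x$), this shows $\shH^n(f_* \omega_f^\bullet) \subseteq F^H_0 \shH^0_f$, and hence the desired equality.

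The main obstacle is the step of the second paragraph: one must make sure that the natural fundamental-class representatives of the Brieskorn lattice genuinely lie in order zero of the local cohomology filtration, so that the level-zero Du Bois equality of Mustaţă-Popa can be brought to bear. Away from this point the argument is the same filtered bookkeeping as in the proof of \cref{prop:cohomology-derham}, now carried out with $F^H_\bullet$ in place of the order-Ext filtration $F_\bullet$, the only difference being which piece of the local cohomology filtration the representatives are allowed to occupy.
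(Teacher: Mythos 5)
Your proposal is correct and follows essentially the same route as the paper: the paper deduces \cref{prop:hodge} directly from the Musta\c{t}\v{a}--Popa characterization (\(F^H_0 = E_0\) on \(\shH^r_X(\Omod_U)\) for a pure-dimensional Cohen-Macaulay space if and only if it is Du Bois, applicable since \(X\) is a complete intersection) combined with \cref{thm:main}. Your second paragraph merely makes explicit the bookkeeping the paper leaves implicit, namely that the Brieskorn-lattice classes have \(\partial_t\)-degree-zero representatives in \(\omega_f^n \simeq \shExt^r_{\Omod_U}(\Omod_X, \Omega^d_U) \subseteq O_0\shH^r_X(\Omod_U) \otimes \Omega^d_U\), which the Du Bois equality places inside \(F^H_0 \shK^0\) — a correct and harmless elaboration of the same argument.
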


When \((Z, x)\) is an ICIS with Du Bois singularities, one has that any smoothing \(f : (X, x) \longrightarrow (\CC, 0)\) is not only Du Bois but also rational. This follows from the version of inversion of adjunction in \cite[5.1]{Sch07}. Therefore, assuming that \((Z, x)\) has Du Bois singularities is enough for both \cref{prop:microlocal} and \cref{prop:hodge} to hold.

\subsection{\texorpdfstring{\(b\)}{b}-functions} \label{sec:bfunc}

Using the notations from \cref{sec:GM}, let \( f : X \longrightarrow S \) be a good representative of a smoothing of an ICIS \((Z, x)\), where \(X\) is an ICIS embedded in a manifold \(U\) of dimension \(d = n + r + 1\). As usual, let \(F_1, \dots, F_r\) be a regular sequence defining \(X\) on \(U\) and choose \(F \) such that \(\restr{F}{X} = f\). Since \(f\) defines and smoothing, the local cohomology module 
\[
\iota_{+} \shH_X^r (\Omod_U) \simeq \shH^{r+1}_{\Gamma_f}(\Omod_{U\times S})
\]
from \cref{lemma:graph-embedding} is a cyclic \(\Dmod_{U}\langle t, \partial_t \rangle\)-module generated by the class \({\delta}_t = [1/F_1 \cdots F_r (F - t)]\). Since \(X\) is a complete intersection and \(\Gamma_t = \{ F - t = 0 \} \cap X\), one has the isomorphism
\[
\shH^{r+1}_{\Gamma_f}(\Omod_{U \times S}) \simeq \shH^1_{F-t}(\shH^r_{X}(\Omod_{U\times S}))
\]
so this module is isomorphic to \(\Dmod_{U}\langle t, \partial_t\rangle \delta \delta_t\), where \(\delta = [1/F_1 \cdots F_r]\) and \(\delta_t = [1/(F-t)]\). It is well-known that \(\delta_t\) and \(F^s\) satisfy the same relations, so \(\iota_{+} \shH^r_X(\Omod_U)\) is in turn isomorphic as \(\Dmod_{U}\langle t, \partial_t \rangle\)-modules to 
\[ \shN_f = \Dmod_U \delta F^s \subseteq \Omod_U[s, F^{-1}] F^s,\] 
where the action of \(t\) is given by multiplication by \(F\) and \(s \mapsto -\partial_t t\). The notation for \(\shN_f\) is justified by the fact that choosing different extensions \(F\) of \(f\) to \(U\) gives rise to isomorphic \(\Dmod_{U}\langle t, \partial_t \rangle\)-modules.

\jump

Considering the submodule \(\shM_f = \Dmod_U[s] \delta F^s \subseteq \shN_f\) leads to the (local) \(b\)-function \(b_{\delta, f}(s) \in \CC[s] \) defined by the functional equation
\[
P(s) \delta F^{s+1} = b_{\delta, f}(s) \delta F^s,
\]
where \(P(s) \in \Dmod_{U, x}[s]\), see for instance \cite[2.7]{Kas78}. One can directly check that \(b_{\delta, f}(s)\) only depends on \(f\). Since \(F\) is not a unit in \(\Omod_{U,x}\) and \( F \not\in (F_1, \dots, F_r)\Omod_{U,x}\), \(-1\) is always a root of \(b_{\delta, f}(s)\). Denote \(\tilde{b}_{\delta, f}(s) = b_{\delta, f}(s) / (s + 1)\). This \(b\)-function was studied by Torrelli in \cite{Tor02}. 

\jump

In \cite[1.8]{Tor09} a third \(b\)-function sitting between \(b_{\delta, f}(s)\) and \(\tilde{b}_{\delta, f}(s)\) is introduced. Let \(\widehat{b}_{\delta, f}(s) \in \CC[s] \) be the minimal polynomial satisfying 
\[
\widehat{b}_{\delta, f}(s) \delta F^s \in \Dmod_{U,x} (J_{\underline{F}, F}, F) \delta F^s,
\]
where \(J_{\underline{F}, F}\) is the ideal generated by the \((r+1)\)-minors of the Jacobian matrix of \(\underline{F} = (F_1, \cdots, F_{r+1})\) and \(F\). As shown in \cite[1.9]{Tor09}, the polynomial \(\widehat{b}_{\delta, f}(s)\) is either equal to \(b_{\delta, f}(s)\) or \(\tilde{b}_{\delta, f}(s)\). However, it is not clear when the equality \(\widehat{b}_{\delta, f}(s) = \tilde{b}_{\delta, f}(s)\) must hold, see the discussion in \cite[\S 3]{Tor09}. 

\jump

We will next relate the polynomial \(\widehat{b}_{\delta, f}(s)\) with the Brieskorn lattice \(H''_{f,x}\) following \cite{Mal75}.

\begin{lemma} \label{lemma:origin}
The \(\Dmod_{U,x}\)-module \(\shN_{f,x} / \shM_{f,x}\) is supported at the origin.
\end{lemma}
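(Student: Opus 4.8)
The plan is to show that the support of the coherent $\Dmod_{U}$-module $\shN_{f,x}/\shM_{f,x}$, a closed analytic subset of $U$, reduces to the single point $x$. Since the inclusion $\shM_f \subseteq \shN_f$ always holds, it suffices to establish the reverse inclusion of stalks $\shN_{f,y} \subseteq \shM_{f,y}$ at every point $y \neq x$, and I would do this by localizing and putting $f$ into a submersive normal form.

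First I would dispose of the points $y \notin X$. Both modules are generated by $\delta F^s$ with $\delta = [1/F_1\cdots F_r]$ a section of $\shH^r_{X}(\Omod_U)$, which is supported on $X$; hence both stalks vanish for $y\notin X$ and there is nothing to prove. The content is therefore concentrated on $X\setminus\{x\}$. Recall from \cref{sec:regular-icis} that, for a smoothing of an ICIS, the critical locus $\shC_f$ equals $\{x\}$, so at every $y\in X\setminus\{x\}$ the space $X$ is smooth and $f = \restr{F}{X}$ is a submersion.

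Next, fix such a $y$. Because $X$ is smooth at $y$ and $f$ is submersive there, the differentials $\dd F_1,\dots,\dd F_r,\dd F$ are linearly independent at $y$, so I may complete them to a coordinate system $x_1 = F_1,\dots,x_r = F_r,\ x_{r+1} = F,\ x_{r+2},\dots,x_d$ on a neighbourhood of $y$. In these coordinates the vector field $\xi = \partial_{x_{r+1}}$ satisfies $\xi F = 1$ and $\xi F_i = 0$ for $i \leq r$; in particular $\delta = [1/x_1\cdots x_r]$ is independent of $x_{r+1}$, so $\xi\delta = 0$. Feeding this into the action \eqref{eq:graph-action} on the generator $\delta\otimes 1$ of the presentation $\shB_f$ from \eqref{eq:graph-embedding2} gives
\[
\xi(\delta\otimes 1) = (\xi\delta)\otimes 1 - (\xi F)\,\delta\otimes\partial_t = -\,\delta\otimes\partial_t,
\]
so that $\partial_t(\delta\otimes 1) = \delta\otimes\partial_t = -\xi(\delta\otimes 1)$; that is, $\partial_t$ acts on the generator as the honest operator $-\partial_{x_{r+1}}\in\Dmod_{U,y}$. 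A direct inspection of \eqref{eq:graph-action} shows that the $\partial_t$-action commutes with the $\Dmod_U$-action, whence $\partial_t^k(\delta\otimes 1) = (-\partial_{x_{r+1}})^k(\delta\otimes 1)$ for all $k \geq 0$. Since $t$ acts as multiplication by $F\in\Omod_U$, it follows that $\shN_{f,y} = \Dmod_{U,y}\langle t,\partial_t\rangle\,(\delta\otimes 1) = \Dmod_{U,y}\,(\delta\otimes 1)\subseteq \shM_{f,y}$, as desired. Thus $\shN_{f,x}/\shM_{f,x}$ vanishes away from $x$ and is supported at the origin.

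The main obstacle is the local reduction at the submersive points: one has to pass from the abstract presentation $\shN_f = \Dmod_U\,\delta F^s$, with $t$ acting as multiplication by $F$ and $s = -\partial_t t$, to the explicit $\shB_f$-action of \eqref{eq:graph-action}, and then recognise that once $F$ is a coordinate the extra operator $\partial_t$ collapses onto a genuine vector field on $U$. The one point requiring care is the verification that $\partial_t$ commutes with the full $\Dmod_U$-action --- not merely with $\Omod_U$ --- so that the induction on powers of $\partial_t$ is legitimate.
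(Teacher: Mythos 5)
Your core computation is correct, and it is in fact the geometric mechanism underlying the paper's own proof: for a good representative one has \(\shS_X = \shC_f = \{x\}\), so at \(y \in X \setminus \{x\}\) the functions \(F_1, \dots, F_r, F\) extend to a coordinate system, \(\partial_t\) acts on the generator \(\delta F^s\) as the vector field \(-\partial_{x_{r+1}}\), and since \(t\) and \(\partial_t\) do commute with the \(\Dmod_U\)-action in \eqref{eq:graph-action} (your one flagged worry checks out), you correctly obtain \(\shN_{f,y} = \Dmod_{U,y}\,\delta F^s \subseteq \shM_{f,y}\). The paper encodes exactly this, coordinate-freely, in the identity \eqref{eq:diff-op}: a maximal minor of the Jacobian times \(\partial_t \delta F^s\) lies in \(\Dmod_{U,x}\,\delta F^s\), and off the critical locus some minor is a unit.

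The genuine gap is in how you conclude. \enquote{Supported at the origin}, in the sense the paper needs for its applications (the vanishing \(H^p(\DR_U(\shN_{f,x}/\shM_{f,x})) = 0\) for \(p \neq 0\) via \cite{Mal75}, and the Kashiwara-type \(\textnormal{End}\) computation in the proof of \cref{thm:malgrange}), means that every element of the germ module \(\shN_{f,x}/\shM_{f,x}\) is annihilated by a power of the maximal ideal \(\mathfrak{m}_x\). Your argument gives vanishing of the stalks of the quotient sheaf on a punctured neighbourhood, and you bridge to the stronger statement by asserting at the outset that \(\shN_{f,x}/\shM_{f,x}\) is a \emph{coherent} \(\Dmod_U\)-module with closed analytic support. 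That assertion is false: by the computations following the lemma, \(H^0(\DR_U(\shN_{f,x}/\shM_{f,x})) \simeq H''_{f,x}[\partial_t]/\widetilde{H}''_{f,x}\), which is infinite dimensional since \(\widetilde{H}''_{f,x}\) is an \(\Omod_{\CC,0}\)-lattice, whereas a coherent \(\Dmod_U\)-module supported at a point is, by Kashiwara's equivalence, a finite direct sum of \(\delta\)-modules with finite-dimensional de Rham \(H^0\). Without coherence, stalkwise vanishing off \(x\) does not yield the uniform power \(\ell\) with \(x_i^{\ell}\,\partial_t^k \delta F^s \in \shM_{f,x}\), and this is precisely where the paper does its work: isolatedness of \(\shC_f\) gives \(x_i^{n} \in (J_{\underline{F}, F}, F_1, \dots, F_r)\Omod_{U,x}\) by the Nullstellensatz, and an induction on the \(\partial_t\)-degree using \eqref{eq:diff-op} together with the commutators \([\Delta_{k_0, \dots, k_r}(\underline{F}), x_i^{\ell+1}]\) produces the required annihilating powers. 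So your normal form proves the right statement pointwise away from \(x\), but to prove the lemma as stated and used you still need this effective, germ-level argument at \(x\) (or a substitute finiteness argument, which in practice amounts to the same identity).
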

\begin{proof}
    Since the module \(\shN_{f,x}\) is generated by \( \partial_t^k \delta F^s\) over \(\Dmod_{U,x}\), it is enough to show that there exists \(\ell \geq 0\) such that \(x_i^{\ell} \partial_t^k \delta F^s\) is in \(\shM_{f,x}\), for \(i = 1, \dots, d\). The result is clear for \(k = 0\), so we will argue by induction on \(k \in \mathbb{Z}_{\geq 0}\).

    \jump

    By hypothesis, the critical locus \(\shC_{f,x}\) of \(f\) is isolated, hence \(x_i^n \in (J_{\underline{F}, F}, F_1, \dots, F_r) \Omod_{U, x}\), for some \(n \in \mathbb{Z}_{\geq 0}\). Let us introduce some notation. By \(m_{k_0, \dots, k_r}(\underline{H})\) we will denote the minor of the Jacobian matrix \(J_{\underline{H}}\) with respect to the columns \( k_0 < k_1 < \cdots < k_{r} \). Then, we have the identity
    \begin{equation} \label{eq:diff-op}
        \left[\sum_{i = 0}^r (-1)^{r+i} m_{k_0, \dots, \hat{k}_i, \dots, k_r}(\underline{F}) \frac{\partial}{\partial x_i}\right] \cdot \delta F^s = - m_{k_0, \dots, k_r}(\underline{F}, F) \partial_t \delta F^s,
    \end{equation}
    see \textit{loc.~cit.} Denote by \(\Delta_{k_0, \dots, k_{r}}(\underline{F})\) the differential operator on the left-hand side of \eqref{eq:diff-op}.
    
    \jump

    By induction hypothesis there exists \(\ell \geq 0\) such that \(x_i^{\ell} \partial_t^k \delta F^s \in \shM_{f,x}\), for \(i = 1, \dots, d\). Using \eqref{eq:diff-op}, one has
    \begin{equation*}
    \begin{split}
        x_i^{\ell+1} m_{k_0, \dots, k_r}(F, \underline{F}) \partial_t^{k+1} \delta F^s & = - x_i^{\ell+1} \Delta_{k_0, \dots, k_r}(\underline{F}) \partial_t^k \delta F^s \\ & = -\Delta_{k_0, \dots, k_r}(\underline{F}) x_i^{\ell+1} \partial_t^k \delta F^s - [\Delta_{k_0, \dots, k_r}(\underline{F}), x_i^{\ell+1}] \partial_t^k \delta F^s.
    \end{split} 
    \end{equation*}
    Since \(\Delta_{k_0, \dots, k_r}(\underline{F})\) is a derivation, the above expression belongs to \(\shM_{f,x}\) by induction hypothesis. Therefore, the induction step is proven: \(x_i^{\ell+n+1} \partial_t^{k+1} \delta F^s \in \shM_{f,x}\), for \(i = 1, \dots, r\), since \(F_j \delta F^s = 0 \), for \( j = 1, \dots r\).
\end{proof}

Consider now the short exact sequence of \(\Dmod_{U,x}\langle t, \partial_t \rangle\)-modules,
\[
0 \longrightarrow \shM_{f,x} \longrightarrow \shN_{f,x} \longrightarrow \shN_{f,x} / \shM_{f,x} \longrightarrow 0.
\]
Taking cohomology of the de Rham complexes over \(U\), leads to the long exact sequence of \(\CC\{t\}\langle \partial_t \rangle\)-modules
\[
\cdots \longrightarrow H^p(\DR_U \shM_{f,x}) \longrightarrow H^p(\DR_U \shN_{f,x}) \longrightarrow H^p(\DR_U (\shN_{f,x} / \shM_{f,x})) \longrightarrow \cdots.
\]
Identifying \(\DR_U \shN_{f,x}\) with the stalk of at \(x\) of the complex \(\shK^\bullet\) from \eqref{eq:complex-K}, we have that \(H^p(\shN_{f,x}) = 0\), for \(p \neq -n, 0\), by \cref{prop:cohomology-derham}. \cref{lemma:origin} implies \(H^p(\shN_{f,x} / \shM_{f,x}) = 0 \), for \(p \neq 0\), see \cite[2.10]{Mal75}. All this yields the injection
\[
H^0(\DR_U \shM_{f,x}) \xhookrightarrow{\ \quad\ } H^0(\DR_U \shN_{f,x}) \simeq H''_{f,x}[\partial_t],
\]
where \(H''_{f,x}\) is the Brieskorn lattice from \cref{prop:cohomology-relative}. By regularity of the Gauss-Manin connection, the image of \(H^0(\DR_U \shM_{f,x})\) coincides with the saturation of \(H_{f,x}''\),
\begin{equation} \label{eq:saturation}
    \widetilde{H}''_{f,x} = \sum_{k \geq 0} (t \partial_t)^k H_{f,x}'' = \sum_{k \geq 0} (\partial_t t)^k H_{f,x}''.
\end{equation}

\begin{proof}[Proof of \cref{thm:malgrange}]
    The polynomial \(\widehat{b}_{\delta, f}(s)\) is the minimal polynomial of the action of \(s\) on the module
    \[
        L_{f,x} = \frac{\Dmod_{U,x}[s] \delta F^s}{\Dmod_{U,x}[s](J_{\underline{F}, F}, F) \delta F^s}.
    \]
    Because \(\shM_{f,x}/t \shM_{f,x}\) is finitely generated over \(\Dmod_{U,x}\) so is \(L_{f,x}\). Moreover, \(L_{f,x}\) is supported at the origin. Therefore, \(\textnormal{End}_{\Dmod_{U,x}}(L_{f,x}) \simeq \textnormal{End}_{\CC}(H^0(\DR_U L_{f,x}))\) and \(\widehat{b}_{\delta,f}(s)\) is the minimal polynomial of \(s\) in \(H^0(\DR_U L_{f,x})\).
    
    \jump

    To finish the proof, it is enough to show that the morphism
    \begin{equation} \label{eq:last-morphism}
        H^0(\DR_U(\shM_{f,x} / t \shM_{f,x})) \simeq {\widetilde{H}''_{f,x}}/{t \widetilde{H}''_{f,x}} \longrightarrow  H^0(\DR_U L_{f,x})
    \end{equation}
    is an isomorphism. Since \(H^0(\DR_U(\shF_x)) = \shF_x \otimes_{\Dmod_{U,x}} \Omega^d_{U,x}\), for any \(\Dmod_{U,x}\)-module \(\shF_x\), see \cite[2.1]{Mal75}, the morphism \eqref{eq:last-morphism} is surjective, and it is enough to show that \( J_{\underline{F}, F}\Omega^d_{U,x}[s] \delta F^s \) is zero in \( \widetilde{H}_{f,x}'' / t \widetilde{H}''_{f,x} \).
    
    \jump

    First, notice that if we denote \(\widetilde{F}_x\) the saturation of \(\partial^{-1}_t H_{f,x}''\), then \(t \widetilde{H}_{f,x}'' \simeq \widetilde{F}_x\). Indeed, by definition of saturation \((t\partial_t) \widetilde{F}_x \subseteq \widetilde{F}_x\). Since \(\partial_t : \partial_t^{-1} H_{f,x}'' \longrightarrow H''_{f,x}\) is a bijection, \(\partial_t \widetilde{F}_x \simeq \widetilde{H}_{f,x}''\). Moreover, by \cref{prop:microlocal} and the properties of the \(V\)-filtration, the action of \(t\) in \(H''_{f,x}\) is bijective. Therefore, \(t \widetilde{H}_{f,x}'' \simeq \widetilde{F}_x\).
    
    \jump

    Let us take an element \(\omega \in J_{\underline{F}, F} \Omega^d_{U,x} \delta\), which can be identified with an element of \(\textnormal{Ext}_{\Omod_{U,x}}^r(\Omod_{X,x}, \dd F \wedge \dd F_1 \wedge \cdots \wedge \dd F_r \wedge \Omega^n_{U,x})\). Writing \(\omega = \dd F \wedge \dd F_1 \wedge \cdots \wedge \dd F_r \wedge \eta\), with \(\eta \in \textnormal{Ext}^r_{\Omod_{U,x}}(\Omod_{X,x}, \Omega^{n}_{U,x})\), the induced class on the complex \(\shK^\bullet\) from \eqref{eq:complex-K} satisfies,
    \[
        \partial_t([\dd F \wedge \dd F_1 \wedge \cdots \wedge \dd F_r \wedge \eta]) = [\dd F_1 \wedge \cdots \wedge \dd F_r \wedge \dd \eta],
    \]
    with the right-hand side belonging to \(\omega_{f,x}^n/ \dd \Omega^{n-1}_{f,x}\). Thus, since the smoothing is rational, \(\partial_t\) is invertible on \(H''_{f,x}\) by \cref{prop:microlocal} and \(\omega \in \partial_t^{-1} H_{f,x}''\), so \(\omega \in \widetilde{F}_x \) as required. In general, let \(\omega(s) \in J_{\underline{F}, F} \Omega^d_{U,x}[s] \delta F^s\). From \eqref{eq:diff-op}, 
    \[
        (s+1) m_{k_0, \dots, k_r}(\underline{F}, F) \delta - \Delta_{k_0, \dots, k_r}(\underline{F}) F \delta \in \textnormal{Ann}_{\Dmod_{U,x}[s] \delta} F^s.
    \]
    Hence, \(m_{k_0, \dots, k_r}(\underline{F},F) \omega(s) = m_{k_0, \dots, k_r}(\underline{F}, F)\omega(-1) \) modulo \( \textnormal{Ann}_{\Dmod_{U,x}[s]\delta} F^s \cdot \Omega^d_{U,x}[s]\delta + F \cdot \Omega^d_{U,x}[s]\delta\). Finally, from \(H^0(\DR_U(\shM_{f,x})) \simeq \widetilde{H}_{f,x}'' \), one deduces the isomorphism
    \[
    \widetilde{H}_{f,x}'' / t \widetilde{H}_{f,x}'' \simeq \Omega^d_{U,x}[s]\delta / (\textnormal{Ann}_{\Dmod_{U,x}[s]\delta} F^s \cdot \Omega^d_{U,x}[s] \delta + F \cdot \Omega^d_{U,x}[s]\delta ),
    \]
    and any element \(m_{k_0, \dots, k_r}(\underline{F}, F) \omega(s) \) is equivalent to an element lying on \(t \widetilde{H}_{f,x}'' \simeq \widetilde{F}_x\). This concludes the proof.
\end{proof}

As a consequence, we obtain the following bound for the degree of the \(b\)-function \(b_{\delta,f}(s)\).

\begin{corollary}
    The degree of \(b_{\delta, f}(s)\) is bounded by \(\mu(Z, x) + 1\).
\end{corollary}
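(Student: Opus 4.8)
The plan is to combine \cref{thm:malgrange} with two elementary facts: the degree of a minimal polynomial is bounded by the dimension of the space on which the operator acts, and the saturated Brieskorn lattice is a $\CC\{t\}$-lattice of rank $\mu(Z,x)$. First I would reduce the statement about $b_{\delta,f}(s)$ to one about $\widehat{b}_{\delta,f}(s)$. By \cite[1.9]{Tor09} the polynomial $\widehat{b}_{\delta,f}(s)$ equals either $b_{\delta,f}(s)$ or $\tilde{b}_{\delta,f}(s) = b_{\delta,f}(s)/(s+1)$; in either case $b_{\delta,f}(s)$ divides $(s+1)\,\widehat{b}_{\delta,f}(s)$, so that $\deg b_{\delta,f}(s) \le \deg \widehat{b}_{\delta,f}(s) + 1$. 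It therefore suffices to show $\deg \widehat{b}_{\delta,f}(s) \le \mu(Z,x)$.

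By \cref{thm:malgrange}, $\widehat{b}_{\delta,f}(s)$ is the minimal polynomial of the operator $-\partial_t t$ acting on the finite-dimensional $\CC$-vector space $\widetilde{H}''_{f,x}/t\widetilde{H}''_{f,x}$. Since the minimal polynomial of an endomorphism of a vector space has degree at most the dimension of that space, the bound follows once I establish that $\dim_\CC \widetilde{H}''_{f,x}/t\widetilde{H}''_{f,x} = \mu(Z,x)$.

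For this, the key step is that the saturation $\widetilde{H}''_{f,x} = \sum_{k\ge 0}(t\partial_t)^k H''_{f,x}$ from \eqref{eq:saturation} is again a free $\CC\{t\}$-module of rank $\mu(Z,x)$. By \cref{prop:cohomology-relative} the Brieskorn lattice $H''_{f,x}$ is free of rank $\mu(Z,x)$ over $\Omod_{\CC,0} = \CC\{t\}$, and after inverting $t$ it generates the rank-$\mu(Z,x)$ meromorphic connection $\shH^0_f[t^{-1}]$. The regularity of the Gauss--Manin connection (\cref{lemma:regular-holonomic}, cf.\ \cite[3.6]{Gre75}) guarantees that the $t\partial_t$-saturation remains a finitely generated $\CC\{t\}$-module; being $t\partial_t$-stable, torsion-free, and containing $H''_{f,x}$, it is a lattice of the same connection inside the rank-$\mu(Z,x)$ localization, hence free of rank exactly $\mu(Z,x)$ over the discrete valuation ring $\CC\{t\}$. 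For any free $\CC\{t\}$-module $M$ of rank $\mu$ one has $\dim_\CC M/tM = \mu$, so $\dim_\CC \widetilde{H}''_{f,x}/t\widetilde{H}''_{f,x} = \mu(Z,x)$. Chaining the inequalities yields $\deg b_{\delta,f}(s) \le \mu(Z,x)+1$.

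The main obstacle is precisely the finiteness of the saturation: one must know a priori that iterating $t\partial_t$ does not enlarge the lattice without bound, so that $\widetilde{H}''_{f,x}$ stays a coherent $\CC\{t\}$-lattice rather than an infinitely generated submodule of the localization. This is exactly what the regularity of the Gauss--Manin connection supplies, and it is the only non-formal input beyond \cref{thm:malgrange} itself.
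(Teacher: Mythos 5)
Your proposal is correct and follows exactly the route the paper intends (the paper leaves the corollary's proof implicit after \cref{thm:malgrange}): reduce to \(\widehat{b}_{\delta,f}(s)\) via Torrelli's dichotomy, identify it as the minimal polynomial of \(-\partial_t t\) on \(\widetilde{H}''_{f,x}/t\widetilde{H}''_{f,x}\), and bound the dimension of that quotient by \(\mu(Z,x)\) using that the saturation is a free \(\CC\{t\}\)-lattice of rank \(\mu(Z,x)\), with regularity of the Gauss--Manin connection supplying the finiteness of the saturation just as in the paper's discussion around \eqref{eq:saturation}. No gaps; your identification of the finiteness of the saturation as the only non-formal input matches the paper's use of \cite[3.6]{Gre75}.
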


\cref{thm:malgrange} allows an algebraic computation of the eigenvalues of the monodromy of any smoothing \(f: (X, x) \longrightarrow (\CC, 0)\) of an ICIS; in particular, for the generic monodromy of an ICIS discussed in \cref{sec:generic-monodromy}. We end this section with the following example.

\begin{example} \label{ex:2}
    Consider \(Z \subseteq \CC^3 \), the complete intersection defined by \(I = (xy + z^3, x^2 + yz)\), having only one isolated singular point at the origin of \(\CC^3\).
    
    \jump
    
    Let \(f : (X, 0) \longrightarrow (\CC^3, 0) \) be a \(\mu\)-minimal smoothing of the ICIS \((Z, 0)\). The eigenvalues of the generic monodromy of \((Z, 0)\) can be calculated directly using the Gauss-Manin connection acting on the Brieskorn lattice \(H''_{f,0}\) by adapting Brieskorn's original algorithm \cite[\S 4.5]{Bri70}, see \cite{Tor24}. Alternatively, by \cref{thm:malgrange}, one can compute the \(b\)-function \(b_{\delta, f}(s)\) using \textsc{Macaulay2} \cite{M2}. In any case, all the eigenvalues of the generic monodromy of \((Z, 0)\) have the form \(e^{2 \pi \imath k/ 9}\). 
    
    \jump

    On the other hand, the Verdier monodromy of the variety \(Z\) on the corresponding nearby sheaf can be calculated through the Berstein-Sato polynomial \(b_{Z}(s)\) of the variety \(Z\), see \cite[2.8]{BMS06}. Using \textsc{Singular} \cite{Sing} one can verify that \(b_{Z}(s)\) has \(-3/2\) as a root. Hence, the Verdier monodromy of \(Z\) has an extra eigenvalue equal to \(e^{\pi \imath}\). 

    \jump

    This difference can be explained by the fact that the Verdier monodromy also captures the Picard-Lefschetz monodromy of a general element of the ideal \(I\). Indeed, if \(g = \alpha_1 f_1 + \alpha_2 f_2 \in I\) is a general linear combination of the generators,
    \[
        b_{g}(s) = (s+1)\left(s+\frac{3}{2}\right).
    \]
    In general, this fact can be made precise using \cite{Mus22}.
\end{example}

\bibliography{/Users/guillem/Dropbox/Maths/Bib/serials_short,/Users/guillem/Dropbox/Maths/Bib/references}

\providecommand{\bysame}{\leavevmode\hbox to3em{\hrulefill}\thinspace}
\providecommand{\MR}{\relax\ifhmode\unskip\space\fi MR }
\providecommand{\MRhref}[2]{%
  \href{http://www.ams.org/mathscinet-getitem?mr=#1}{#2}
}
\providecommand{\href}[2]{#2}
\begin{thebibliography}{KKMSD73}

\bibitem[Bar78]{Bar78}
D.~Barlet, \emph{Le faisceau {$\omega_X^\bullet$} sur un espace analytique {$X$} de dimension pure}, Fonctions de Plusieurs Variables Complexes {III} (F.~Norguet, ed.), Lecture Notes in Math., vol. 670, Springer, Berlin, 1978, pp.~187--204.

\bibitem[BG80]{BG80}
R.-O. Buchweitz and G.-M. Greuel, \emph{The {M}ilnor number and deformations of complex curve singularities}, Invent. Math. \textbf{58} (1980), no.~3, 241--281.

\bibitem[BM84]{BM84}
J.~Briançon and Ph. Maisonobe, \emph{Id\'eaux de germes d'op\'eraterus diff\'erentieles \`a une variable}, Enseign. Math. \textbf{30} (1984), no.~1--2, 7--38.

\bibitem[BMS06]{BMS06}
N.~Budur, M.~Musta\c{t}\v{a}, and M.~Saito, \emph{Bernstein-{S}ato polynomials of arbitrary varieties}, Compos. Math. \textbf{142} (2006), no.~3, 779--797.

\bibitem[Boi81]{DuBo81}
P.~Du Bois, \emph{Complexe de de {R}ham filtré d'une variété singulière}, Bull. Soc. Math. France \textbf{109} (1981), 41--81.

\bibitem[Bri70]{Bri70}
E.~Brieskorn, \emph{Die {M}onodromie der isolierten {S}ingularitäten von {H}yperflächen}, Manuscripta Math. \textbf{2} (1970), 103--161.

\bibitem[Del83]{Del83}
P.~Deligne, \emph{Int\'egration sur un cycle \'evanescent}, Invent. Math. \textbf{76} (1983), 129--143.

\bibitem[DGPS21]{Sing}
W.~Decker, G.-M. Greuel, G.~Pfister, and H.~Schönemann, \emph{\textsc{Singular} \textup{\texttt{4.2.0}} --- {A} computer algebra system for polynomial computations}, Available at \url{www.singular.uni-kl.de}, 2021.

\bibitem[DMS11]{DMS11}
A.~Dimca, P.~Maisonobe, and M.~Saito, \emph{Spectrum and multiplier ideals of arbitrary subvarieties}, Ann. Inst. Fourier (Grenoble) \textbf{61} (2011), no.~4, 1633--1653.

\bibitem[Elk78]{Elk78}
R.~Elkik, \emph{Singularit\'es rationelles et d\'eformations}, Invent. Math. \textbf{47} (1978), 139--148.

\bibitem[Fer70]{Fer70}
A.~Ferrari, \emph{Cohomology and holomorphic differential forms on complex analytic spaces}, Ann. Sc. Norm. Super. Pisa Cl. Sci. (3) \textbf{24} (1970), 65--77.

\bibitem[GLS07]{GLS07}
G.-M. Greuel, C.~Lossen, and E.~Shustin, \emph{Introduction to singularities and deformations}, Springer Monogr. Math., Springer, Berlin, 2007.

\bibitem[GR70]{GR70}
H.~Grauert and O.~Riemenschneider, \emph{Verschwindungssätze für analytische {K}ohomologiegruppen auf komplexen {R}äumen}, Invent. Math. \textbf{11} (1970), 263--292.

\bibitem[Gre75]{Gre75}
G.-M. Greuel, \emph{Der {G}auss-{M}anin-{Z}usammenhang isolierter {S}ingularitäten von vollständigen {D}urchschnitten}, Math. Ann. \textbf{214} (1975), 235--266.

\bibitem[Gre80]{Gre80}
\bysame, \emph{Dualität in der lokalen kohomologie isolierter singularitäten}, Math. Ann. \textbf{250} (1980), no.~2, 157--173.

\bibitem[GS19]{M2}
D.~R. Grayson and M.~E. Stillman, \emph{\textsc{Macaulay2} \textup{\texttt{1.14}} --- {A} software system for research in algebraic geometry}, Available at \url{www.math.uiuc.edu/Macaulay2/}, 2019.

\bibitem[Har67]{Har67}
R.~Hartshorne, \emph{Local cohomology}, Lecture Notes in Math., no.~41, Springer, Berlin, 1967.

\bibitem[Har80]{Har80}
\bysame, \emph{Stable reflexive sheaves}, Math. Ann. \textbf{254} (1980), no.~2, 121--176.

\bibitem[Her01]{Hert01}
C.~Hertling, \emph{Frobenius manifolds and moduli spaces for singularities}, Cambridge Tracts in Math., no. 151, Cambridge Univ. Press, Cambridge, 2001.

\bibitem[HTT08]{HTT08}
R.~Hotta, K.~Takeuchi, and T.~Tanisaki, \emph{{$D$}-modules, {P}erverse {S}heaves and {R}epresentation {T}heory}, Progr. Math., no. 263, Birkhäuser, Basel, 2008.

\bibitem[Kas76]{Kas76}
M.~Kashiwara, \emph{${B}$-functions and holonomic systems}, Invent. Math. \textbf{38} (1976), 33--53.

\bibitem[Kas78]{Kas78}
\bysame, \emph{On the holonomic systems of linear differential equations, {II}}, Invent. Math. \textbf{49} (1978), 121--135.

\bibitem[Ker84]{Ker84}
M.~Kersken, \emph{Regul\"are differentilformen}, Manuscripta Math. \textbf{46} (1984), 1--25.

\bibitem[KKMSD73]{Kem73}
G.~Kempf, F.~Knudsen, D.~Mumford, and B.~Saint-Donat, \emph{Toroidal embeddings {I}}, Lecture Notes in Math., no. 339, Springer, Berlin, 1973.

\bibitem[Kov99]{Kov99}
S.~J. Kov\'acs, \emph{Rational, log canonical, {D}u {B}ois singularities: On the conjectures of {K}ollár and {S}teenbrink}, Compos. Math. \textbf{118} (1999), 123--133.

\bibitem[KS16]{KS16}
S.~J. Kov\'acs and K.~Schwede, \emph{Du {B}ois singularities deform}, Minimal Models and Extremal Rays (Kyoto, 2011), Adv. Stud. Pure Math., vol.~70, 2016.

\bibitem[KSS10]{KSS10}
S.~J. Kov\'acs, K.~Schwede, and K.~E. Smith, \emph{The canonical sheaf of {D}u {B}ois singularities}, Adv. Math. \textbf{224} (2010), no.~4, 1618--1640.

\bibitem[Loo84]{Loo84}
E.~J.~N. Looijenga, \emph{Isolated singular points on complete intersections}, London Math. Soc. Lecture Note Ser., no.~77, Cambridge Univ. Press, 1984.

\bibitem[Loo86]{Loo86}
E.~Looijenga, \emph{Riemann-{R}och and smoothings of singularities}, Topology \textbf{25} (1986), no.~3, 293--302.

\bibitem[LS85]{LS85}
E.~Looijenga and J.~Steenbrink, \emph{Milnor number and {T}jurina number of complete intersections}, Math. Ann. \textbf{271} (1985), 121--124.

\bibitem[Mal74a]{Mal74a}
B.~Malgrange, \emph{Intégrales asymptotiques et monodromie}, Ann. Sci. École Norm. Sup. (4) \textbf{7} (1974), no.~3, 405--430.

\bibitem[Mal74b]{Mal74b}
\bysame, \emph{Sur les polynômes de {I}. {N}. {B}ernstein}, Russian Math. Surveys \textbf{29} (1974), no.~4, 81--88.

\bibitem[Mal75]{Mal75}
\bysame, \emph{Le polynôme de {B}ernstein d'une singularité isolée}, Lecture Notes in Math. \textbf{4} (1975), 98--119.

\bibitem[Meb76]{Meb76}
Z.~Mebkhout, \emph{Local cohomology of analytic spaces}, Publ. Res. Inst. Math. Sci. \textbf{12} (1976), no.~99, 247--256.

\bibitem[MP22]{MP22}
M.~Musta\c{t}\v{a} and M.~Popa, \emph{Hodge filtration on local cohomology, {D}u {B}ois complex and local cohomological dimension}, Forum Math. Pi \textbf{10} (2022), 1--58.

\bibitem[MT04]{MaTo04}
P.~Maisonobe and T.~Torrelli, \emph{Image inverse en th\'eorie des {$\mathscr{D}$}-modules}, Éléments de la théorie des systèmes différentiels géométriques, Sémin. Congr., vol.~8, Soc. Math. France, Paris, 2004, pp.~1--57.

\bibitem[Mus22]{Mus22}
M.~Musta\c{t}\v{a}, \emph{Bernstein-{S}ato polynomials for general ideals vs. principal ideals}, Proc. Amer. Math. Soc. \textbf{150} (2022), 3655--3662.

\bibitem[MV19]{MV19}
C.~Miller and S.~Vassiliadou, \emph{({C}o)torsion of exterior powers of differentials over complete intersections}, J. Singul. \textbf{19} (2019), 131--162.

\bibitem[Par91]{Para91}
A.~J. Parameswaran, \emph{Topological equisingularity for isolated complete intersection singularities}, Compos. Math. \textbf{80} (1991), 323--336.

\bibitem[Pha79]{Pham79}
F.~Pham, \emph{Singularit\'es des syst\`emes diff\'erentiels de {G}auss-{M}anin}, Progr. Math., no.~2, Birkhäuser, Basel, 1979.

\bibitem[Pha11]{Pham11}
\bysame, \emph{Singularities of integrals}, Universitext, Springer, London, 2011.

\bibitem[PR94]{GPR94}
Th. Peternell and R.~Remmert, \emph{Differential calculus, holomorphic maps and linear structures on complex spaces}, Several Complex Variables {VII}, Encyclopaedia Math. Sci., vol.~74, Springer, Berlin, 1994.

\bibitem[Sai82]{Sai82}
M.~Saito, \emph{Gauss-{M}anin system and mixed {H}odge structure}, Proc. Japan Acad. Ser. A Math. Sci. \textbf{58} (1982), no.~1, 29--32.

\bibitem[Sai88]{Sai88}
\bysame, \emph{Modules de {H}odge polarisables}, Publ. Res. Inst. Math. Sci. \textbf{24} (1988), no.~6, 849--995.

\bibitem[Sai89]{Sai89}
\bysame, \emph{On the structure of the {B}rieskorn lattice}, Ann. Inst. Fourier (Grenoble) \textbf{39} (1989), no.~1, 27--72.

\bibitem[Sch07]{Sch07}
K.~Schwede, \emph{A simple characterization of {D}u {B}ois singularities}, Compos. Math. \textbf{143} (2007), no.~4, 813--828.

\bibitem[Ser66]{Ser66}
J.-P. Serre, \emph{Prolongement de faisceaux analytiques cohérents}, Ann. Inst. Fourier (Grenoble) \textbf{16} (1966), no.~1, 363--374.

\bibitem[SS85]{SS85}
J.~Scherk and J.~H.~M. Steenbrink, \emph{On the {M}ixed {H}odge {S}tructure on the cohomology of the {M}ilnor fibre}, Math. Ann. \textbf{271} (1985), 641--665.

\bibitem[{Sta}25]{stacks-project}
The {Stacks project authors}, \emph{The stacks project}, \url{https://stacks.math.columbia.edu}, 2025.

\bibitem[Ste80]{Steen80}
J.~H.~M. Steenbrink, \emph{Cohomologically insignificant degenerations}, Compos. Math. \textbf{42} (1980), no.~3, 315--320.

\bibitem[Tor02]{Tor02}
T.~Torrelli, \emph{Polynômes de {B}ernstein associés à une fonction sur une intersection complète à singularité isolée}, Ann. Inst. Fourier (Grenoble) \textbf{52} (2002), no.~1, 221--244.

\bibitem[Tor05]{Tor05}
\bysame, \emph{Closed formulae for a {$b$}-function associated with a weighted homogeneous isolated complete intersection singularity}, J. Algebra \textbf{286} (2005), 35--40.

\bibitem[Tor09]{Tor09}
\bysame, \emph{Intersection homology {$D$}-module and {B}ernstein polynomials associated with a complete intersection}, Publ. Res. Inst. Math. Sci. \textbf{45} (2009), no.~2, 645--660.

\bibitem[Tor24]{Tor24}
M.~Torrecillas, \emph{Gauss-{M}anin connection of {ICIS} and {B}ernstein-{S}ato polynomials}, Bachelor thesis. Universitat Politècnica de Catalunya -- KU Leuven, 2024.

\bibitem[Tr{\'a}74]{Tra74}
L{\^e}~D{\~u}ng Tr{\'a}ng, \emph{Calculation of the {M}ilnor number of an isolated singularity of a complete intersection}, Funktsional. Anal. i Prilozhen \textbf{8} (1974), no.~2, 45--49.

\end{thebibliography}
\bibliographystyle{amsalpha}

\end{document}